\newtheorem{theorem}{Theorem}[section]
\newtheorem{lemma}[theorem]{Lemma}
\newtheorem{cor}[theorem]{Corollary}
\theoremstyle{definition}
\newtheorem{definition}[theorem]{Definition}
\theoremstyle{remark}
\numberwithin{equation}{section}
\newcommand{\R}{\mathbb R}
\newcommand{\uc}{\mathbb{S}}
\newcommand{\al}{\alpha}
\newcommand{\be}{\beta}
\newcommand{\ol}{\overline}
\newcommand{\sha}{\succ\mkern-14mu_s\;}
\newcommand{\si}{\sigma}
\begin{document}

\date{August 20, 2019}

\title[Over-rotation intervals of bimodal interval maps]{Over-rotation intervals of bimodal interval maps}

\author{Sourav Bhattacharya}

\author{Alexander Blokh}

\address[Sourav Bhattacharaya and Alexander Blokh]
{Department of Mathematics\\ University of Alabama at Birmingham\\
Birmingham, AL 35294}

\subjclass[2010]{Primary 37E05, 37E15; Secondary 37E45}

\keywords{Over-rotation pair, over-rotation number, pattern, periodic orbit}

\begin{abstract}
We describe all bimodal over-twist patterns and
give an algorithm allowing one to determine what the left endpoint of
the over-rotation interval of a given bimodal map is. We then define a
new class of polymodal interval maps called \emph{well behaved}, and generalize onto them the above results.
\end{abstract}

\maketitle

\section{INTRODUCTION}

The paper is devoted to studying rotation theory for interval maps. We
divide Introduction in several subsections.

\subsection{Motivation for rotation theory}
The celebrated Sharkovsky Theorem \cite{S, shatr} elucidates the rich
combinatorial behavior of periodic orbits of continuous interval maps.
To state it let us first introduce the \emph{Sharkovsky ordering} for
the set $\mathbb{N}$ of positive integers:
$$3\sha 5\sha 7\sha\dots\sha 2\cdot3\sha 2\cdot5\sha 2\cdot7 \sha \dots $$
$$
\sha\dots 2^2\cdot3\sha 2^2\cdot5\sha 2^2\cdot7\sha\dots\sha 8\sha
4\sha 2\sha 1$$ If $m\sha n$, say that $m$ is {\it\textbf{shar}per} than
$n$. Denote by $Sh(k)$ the set of all positive integers $m$ such that
$k\sha m$, together with $k$, and by $Sh(2^\infty)$ the set
$\{1,2,4,8,\dots\}$ which includes all powers of $2$. Denote also by
$P(f)$ the set of the periods of cycles of a map $f$ (by the
\emph{period} we mean the \emph{least} period). Theorem \ref{t:shar}
was proven by A. N. Sharkovsky.

\begin{theorem}[\cite{S, shatr}]\label{t:shar}
If $f:[0,1]\to [0,1]$ is a continuous map, $m\sha n$ and $m\in
P(f)$, then $n\in P(f)$. Therefore there exists $k \in \mathbb{N}
\cup \{2^\infty\}$ such that $P(f)=Sh(k)$. Conversely, if $k\in
\mathbb{N}\cup \{2^\infty\}$ then there exists a continuous map
$f:[0,1]\to [0,1]$ such that $P(f)=Sh(k)$. 		
\end{theorem}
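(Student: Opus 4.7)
The plan is to split Theorem~\ref{t:shar} into the \emph{forcing direction} (if $m\in P(f)$ and $m \sha n$, then $n\in P(f)$, whence $P(f)=Sh(k)$ for some $k$) and the \emph{realization direction} (every $Sh(k)$ arises as some $P(f)$). The forcing direction carries the substantive content; the realization direction is routine once explicit model maps are produced.

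For forcing, I will use the \emph{Markov loop graph} associated to a cycle. Given a cycle $P=\{p_1<\cdots<p_m\}$ of period $m$, form the closed intervals $I_j=[p_j,p_{j+1}]$ and a directed graph $G_P$ whose vertices are the $I_j$, with an edge $I_j\to I_k$ whenever $f(I_j)\supseteq I_k$. The analytic backbone, proved by repeated application of the intermediate value theorem, is that any loop $I_{j_0}\to I_{j_1}\to\dots\to I_{j_{n-1}}\to I_{j_0}$ in $G_P$ is the itinerary of a periodic point whose period divides $n$, and a standard primitivity argument promotes this to a point of \emph{exact} period $n$. Hence the task reduces to the combinatorial problem of locating loops of every length $n$ with $m \sha n$ inside $G_P$.

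The decisive structural input is Stefan's theorem: whenever $m$ is the least odd integer $>1$ in $P(f)$, the map $f$ admits a cycle whose spatial order is the canonical Stefan zig-zag. From the explicit Stefan picture one reads off the edges of $G_P$ by inspection and verifies that the graph contains primitive loops of every length $n$ with $m \sha n$. Establishing the Stefan arrangement is the principal obstacle of the proof: one selects, among all odd-period cycles of minimal odd period, a cycle optimally positioned relative to a fixed point of $f$, and then argues that any deviation from the Stefan shape would either create a cycle of smaller odd period or violate minimality via the intermediate value theorem applied to the orbit arrangement.

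The even case $n=2^i\cdot q$ with $q$ odd and $1\sha q$ follows by induction on $i$: passing to $f^2$ restricted to a suitable closed invariant subinterval reduces the claim to a cycle of the preceding form. Finally, the realization direction is handled by exhibiting, for each $k\in\mathbb{N}\cup\{2^\infty\}$, an explicit piecewise linear (truncated tent) model map: the forcing direction already gives $Sh(k)\subseteq P(f)$, and a direct count of monotone laps of $f^n$, equivalently a computation of its kneading invariant, yields the reverse containment.
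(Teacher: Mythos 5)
The paper does not prove this statement; it is Sharkovsky's theorem, cited from \cite{S, shatr}. However, immediately after Theorem~\ref{t:bm2} the paper does sketch a derivation of it from the over-rotation forcing relation: chain the over-rotation numbers $\frac{n}{2n+1}<\frac{n+1}{2n+3}<\dots<\frac12$ to force each odd period from the previous one, use $(n,2n+1)\gtrdot(m,2m)$ to obtain all even periods, and pass to $f, f^2, f^4,\dots$ to cover periods of the form $2^i\cdot(\text{odd})$, with a separate small argument for pure powers of two. Your proposal instead takes the classical Markov-graph/Stefan-cycle route, essentially the Block--Guckenheimer--Misiurewicz--Young proof as organized in Alsed\`a--Llibre--Misiurewicz: build the interval-covering digraph $G_P$, establish that a cycle of minimal odd period must have the Stefan zig-zag arrangement, read off non-repetitive loops of every $\sha$-weaker length, reduce even periods via $f^2$ on an invariant subinterval, and realize each $Sh(k)$ by a truncated tent map. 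This outline is correct; the one place to be careful is the ``loop yields exact period $n$'' step, which needs the loop to be non-repetitive and requires excluding the degenerate case where the constructed periodic point already lies on $P$. The trade-off: your route is elementary and self-contained but must establish the nontrivial Stefan-cycle lemma; the paper's route obtains Sharkovsky's theorem as a corollary of the strictly finer over-rotation forcing result (Theorem~\ref{t:bm2}), exhibiting the over-rotation order as a refinement of the Sharkovsky order, at the cost of presupposing that harder theorem.
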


The above theorem provides a full description of possible sets of
periods of cycles of continuous interval maps. Moreover, it shows that
various periods \emph{force} one another in the sense that if $m \sha
n$ then, for a continuous interval map $f$, the existence of a cycle of
period $m$ \emph{forces} the existence of a cycle of period $n$.

However, the period is a rough characteristic of a cycle as there are a
lot of cycles of the same period. A much finer way of describing cycles
is by considering the permutations induced by cycles. Moreover, one
can define \emph{forcing} relation among permutations in a natural way,
and, by \cite{Ba}, this relation is a partial order. Still, a drawback
here is that while the forcing relation among cyclic permutations
is very fine, the results are much more complicated than for periods
and a transparent description of possible sets of permutations
exhibited by the cycles of a continuous interval map is apparently
impossible (see, e.g., \cite{alm00}). This motivates one to look for
another, middle-of-the-road way of describing cycles, a way not as crude as
periods but not as fine as permutations, which would still allow for a
transparent description.

\subsection{Functional rotation numbers and rotation theory on the circle}\label{ss:fun-rot}
Looking for a new way of describing cycles of interval maps, it is
natural to use as prototypes the results for circle maps of degree one
due to Misiurewicz \cite{mis82} who used the notion of the {\it
rotation number}. This notion was first introduced by Poincar\`e
\cite{poi} for circle homeomorphisms, then extended to circle maps of
degree one by Newhouse, Palis and Takens \cite{npt83}, and then
studied, e.g., in \cite{bgmy80}, \cite{ito81}, \cite{cgt84},
\cite{mis82}, \cite{mis89}, \cite{almm88} (see \cite{alm00} with an
extensive list of references). One can define rotation numbers in a
variety of cases using the following approach \cite{mz89},
\cite{zie95}. Let $X$ be a compact metric space, $\phi:X\to\mathbb R$ be a
bounded measurable function (often called an \emph{observable}),
$f:X\to X$ be a continuous map. Then for any $x$ the set
$I_{f,\phi}(x)$ of all limits of the sequence
${\frac1n}\sum^{n-1}_{i=0}\phi(f^ix)$ is called the {\it
$\phi$-rotation set} of $x$. It is easy to see that $I_{f,\phi}(x)$ is a closed
interval. If $I_{f,\phi}(x)=\{\rho_\phi(x)\}$ is a singleton, then the
number $\rho_\phi(x)$ is called the {\it $\phi$-rotation number} of
$x$. The union of all $\phi$-rotation sets of points of $X$ is called
the \emph{$\phi$-rotation set} of the map $f$ and is denoted by
$I_f(\phi)$.

If $x$ is an $f$-periodic point of period $n$ then its rotation number
$\rho_\phi(x)$ is well-defined, and a useful related concept of the
\emph{$\phi$-rotation pair} of $x$ can be introduced; namely, the pair
$({\frac1n}\sum^{n-1}_{i=0}\phi(f^ix), n)$ is said to be the
\emph{$\phi$-rotation pair} of $x$. Evidently, for all points from the
same cycle their $\phi$-rotation pairs $(t, n)$ are the same, and their
$\phi$-rotation numbers are $\frac{t}{n}$.

For functions $\phi$ related to the dynamics of $f$ one might get
additional results about $\phi$-rotation sets; e.g., this happens for
rotation numbers in the circle degree one case \cite{mis82}. Let
$f:S^1\to S^1$ be a continuous map of degree $1$, $\pi:\mathbb R\to S^1$
be the natural projection which maps an interval $[0,1)$ onto the whole
circle. Fix a lifting $F$ of $f$. Define $\phi_f:S^1\to \mathbb R$ so that
$\phi_f(x)=F(X)-X$ for any point $X\in \pi^{-1}x$; then $\phi_f$ is
well-defined, the classical rotation set of a point $z$ is
$I_{f,\phi_f}(z)=I_f(z)$ and the classical rotation number of $z$ is
$\rho_{f,\phi_f}(z)=\rho(z)$ whenever exists.

The {\it rotation set} of the map $f$ is $I_f=\cup I_f(x)$; it follows
from \cite{npt83},\cite{ito81} that $I_f$ is a closed interval (cf
\cite{B1}). The sum $\sum^{n-1}_{i=0}\phi_f(f^ix)=m$ taken along the
orbit of an $n$-periodic point $x$ is an integer which defines a pair
$(m,n)\equiv rp(x)$ called the {\it rotation pair} of $x$; denote the
set of all rotation pairs of periodic points of $f$ by $RP(f)$. For
real $a\le b$ let $N(a,b)=\{(p,q)\in \mathbb Z^2_+: p/q\in (a,b)\}$ (in
particular $N(a,a)=\emptyset$). For $a\in \mathbb R$ and $l\in \mathbb
Z_+\cup \{2^\infty\}$ let $Q(a,l)$ be empty if $a$ is irrational;
otherwise let it be $\{(ks,ns): s\in Sh(l)\}$ where $a=k/n$ with $k,n$
coprime.

\begin{theorem}[\cite{mis82}]\label{t:misiu} For a continuous circle map $f$ of degree
$1$ such that $I_f=[a,b]$ there exist $l,r\in \mathbb Z_+\cup
\{2^\infty\}$ such that $RP(f)=N(a,b)\cup Q(a,l)\cup Q(b,r)$.
\end{theorem}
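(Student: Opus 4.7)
The plan is to establish the equality $RP(f) = N(a,b) \cup Q(a,l) \cup Q(b,r)$ in three steps: realize all interior rational pairs as rotation pairs of periodic orbits, analyze the structure of pairs at the endpoints via a quotient-style lift, and apply Sharkovsky forcing to the latter.

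First I would handle the interior inclusion $N(a,b) \subseteq RP(f)$. Given $(p,q) \in N(a,b)$, write $p/q = p'/q'$ in lowest terms, so that $p = kp'$, $q = kq'$. Since $p'/q'$ lies strictly inside $I_f$, rotation numbers both above and below $p'/q'$ are attained by points of $f$. Passing to the lift $G(X) = F^{q'}(X) - p'$ on $\mathbb R$, which commutes with integer translations, one sees that $G$ itself has a rotation set straddling $0$. This transversality at $0$ produces a horseshoe for $G$, forcing the projected map $g:S^1\to S^1$ to have periodic orbits of every period $k$, each with $g$-rotation number $0$. Undoing the substitution, such a period-$k$ orbit of $g$ corresponds to an $f$-orbit with rotation pair $(kp', kq') = (p,q)$, and no pairs outside $N(a,b) \cup Q(a,\cdot) \cup Q(b,\cdot)$ occur because $I_f = [a,b]$.

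Second, for the endpoint structure at $a$: if $a$ is irrational then any rotation pair realizing rotation number $a$ would force $a$ rational, so $Q(a,l) = \emptyset$. If $a = k/n$ in lowest terms, then $\gcd(k,n) = 1$ forces every rotation pair with rotation number $a$ to be of the form $(ks, ns)$ for some $s \in \mathbb Z_+$. I would introduce the lift $G(X) = F^n(X) - k$, project it to a degree-$1$ circle map $g$, and use the identity $G^s(X) = F^{ns}(X) - sk$ (immediate by induction). This identifies $f$-periodic orbits with rotation pair $(ks, ns)$ with $g$-periodic orbits of period exactly $s$ having $g$-rotation number $0$.

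Third, the Sharkovsky structure: since $a$ is the \emph{left} endpoint of $I_f$, the rotation set $I_g$ has $0$ as its left endpoint, so no $g$-orbit has negative rotation number. In particular, $0 \in I_g$ produces a $g$-fixed point, and any $g$-periodic orbit with rotation number $0$ is confined, in the lift, to a bounded interval on which $G$ acts as a continuous self-map. Applying Theorem~\ref{t:shar} to this restricted interval system, the set of realized periods is precisely a Sharkovsky tail $Sh(l)$. The symmetric argument at the right endpoint yields $Q(b,r)$. The main obstacle is the reduction in this third step: rigorously producing the interval-like restriction of $G$ on which Sharkovsky forcing applies. This will require a covering-and-itinerary analysis exploiting the one-sided constraint on $I_g$ (no negative rotation numbers under $g$) to show that, for each $g$-periodic orbit with rotation number $0$, its orbit in the lift is bounded and its forward-invariant closure meets no nontrivial integer translate of itself. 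Once this confinement is established, Theorem~\ref{t:shar} supplies the tail structure and completes the proof.
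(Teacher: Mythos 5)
This theorem is cited in the paper from \cite{mis82} without proof, so there is no internal proof to compare against; I will evaluate your proposal on its own.

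Steps 1 and 2 are the right shape (interior pairs via a horseshoe for $F^{q'}-p'$, endpoint arithmetic via coprimality), modulo the usual care needed to verify that the horseshoe orbits of $G=F^{q'}-p'$ have rotation number exactly $0$ and that the period of the resulting $f$-orbit is exactly $kq'$ rather than a proper divisor. The real problem is step 3, where your outline has a genuine gap. You want to confine each $g$-periodic orbit of rotation number $0$ to an interval where Sharkovsky's theorem (Theorem~\ref{t:shar}) applies, but the two conditions you propose to verify --- that the lifted orbit is bounded and that its forward-invariant closure misses its nontrivial integer translates --- are both automatically true for \emph{any} periodic orbit (the orbit is a finite set, and a lift of a cycle can never intersect its own integer translates because distinct lifts of a point differ by integers). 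So verifying them buys you nothing. What you actually need to show is that every rotation-number-$0$ periodic orbit of $G$ lies, after a suitable integer translation, inside a \emph{fixed} interval $[c,c+1)$ of length one. The standard tool for this is exactly the monotone lower-bound (``water'') function $G_l(x)=\inf\{G(y):y\ge x\}$, which this paper itself introduces in Section~5. Since $0$ is the left endpoint of $I_G$, $G_l$ has rotation number $0$ and hence a fixed point $c$; the defining property $G_l(c)=c$ says $G(y)\ge c$ for all $y\ge c$, and an induction on the degree-one relation $G(y-1)=G(y)-1$ shows that if one point of a rotation-number-$0$ cycle is $\ge c+1$ then all of them are, contradicting the presence of a point in $[c,c+1)$. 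This confinement, together with truncating $G$ at height $c+1$ to get a genuine self-map of $[c,c+1]$ whose nontrivial cycles coincide with the rotation-number-$0$ cycles of $G$, is the missing ingredient. Once it is in place, the rest of your step 3 (apply Sharkovsky to the truncated interval map, translate periods back via $G^s=F^{ns}-ks$) goes through. As written, though, the step that does all the work is replaced by a condition that holds vacuously.
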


Observe that, equivalently, one can talk about continuous degree one
maps $F$ of the real line to itself. Each such map $F$ is a lifting of (is locally
monotonically semiconjugate
to) a continuous degree one map $f$ of the unit circle. For brevity in
the future we will talk about \emph{(lifted) periodic points of $F$} meaning
points $x\in \R$ that project to $f$-periodic points of the unit circle
$\uc$ under a canonical projection $\pi:\R\to \uc$. Then
Theorem~\ref{t:misiu} can be viewed as a result describing possible
rotation pairs and numbers of periodic points of continuous degree one
maps of the real line to itself.

\subsection{Rotation theory on the interval}
The choice of $\phi_f$ is crucial for Theorem~\ref{t:misiu} and is
dynamically motivated. It turns out that, with an appropriate choice of
the observable, results similar to Theorem~\ref{t:misiu} can be obtained
for interval maps too. First it was done when the rotation numbers for
interval maps were introduced in 
[Blo94 - Blo95b] 
(see also \cite{bk98}).

Namely, one defines the \textit{rotation pair} of a non-degenerate
cycle as $(p,q)$, where $q$ is the period of the cycle and $p$ is the
number of its
elements which are mapped to the left of themselves. 
Let us denote the rotation pair of a cycle $P$ by $rp(P)$ and the set
of the rotation pairs of all cycles of a map $f$ by $RP(f)$. The number
$p/q$ is called the \textit{ rotation number} of the cycle $P$. We
introduce the following partial ordering among all pairs of integers
$(p,q)$ with $0<p<q$. We will write $(p,q)\gtrdot (r,s)$ if either
$1/2\le r/s<p/q$, or $p/q<r/s\le 1/2$, or $p/q=r/s=m/n$ with $m$ and
$n$ coprime and $p/m\sha r/m$ (notice that $p/m,r/m \in \mathbb{N}$).

\begin{theorem}\label{t:b11}
If  $f:[0,1]\to [0,1]$ is continuous, $(p,q)\gtrdot (r,s)$ and
$(p,q)\in RP(f)$ then $(r,s)\in RP(f)$. 	
\end{theorem}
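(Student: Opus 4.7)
The plan is to follow Misiurewicz's blueprint for circle maps (Theorem~\ref{t:misiu}), adapted to the interval by using the over-rotation pair in place of the circle rotation pair. The overall strategy splits the problem according to which of the three clauses in the definition of $\gtrdot$ is responsible for $(p,q)\gtrdot (r,s)$.

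\textbf{Step 1 (Canonical representative).} First I would show that whenever $(p,q)\in RP(f)$, the map $f$ carries a cycle with rotation pair $(p,q)$ whose combinatorial pattern is as simple as possible. The natural candidate is an \emph{over-twist} pattern: one whose cyclic order under $f$ mirrors a rigid rotation by $p/q$ on $q$ marked points. Existence of such a representative is obtained by selecting a cycle that is minimal, in the sense of pattern forcing, among all cycles of $f$ with rotation pair $(p,q)$, and then checking that this minimality forces twist-type combinatorics.

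\textbf{Step 2 (Monotone forcing in $r/s$).} Given an over-twist cycle $P$ with rotation pair $(p,q)$, I would construct, for every $(r,s)$ satisfying the first or second clause of $\gtrdot$, a cycle of $f$ with rotation pair $(r,s)$. The tool is the Markov graph $\Gamma_P$ associated to $P$: I would exhibit loops in $\Gamma_P$ whose ``rotation weight'' $r/s$ ranges over the full interval from $p/q$ to $1/2$, and then invoke the standard correspondence between admissible loops and periodic points to extract genuine cycles of $f$ with the prescribed rotation pair.

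\textbf{Step 3 (Sharkovsky at a fixed rotation number).} For the third clause, in which $p/q=r/s=m/n$ with $m,n$ coprime and $p/m\sha r/m$, I would reduce to Theorem~\ref{t:shar} by considering an induced one-dimensional dynamics that encodes exactly the cycles of $f$ with rotation number $m/n$ (for instance, by restricting $f^n$ to a suitable union of invariant components of the complement of $P$). The Sharkovsky ordering on the multipliers $p/m$ and $r/m$ then immediately yields the required forcing.

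The \textbf{main obstacle} is Step~2: proving that the Markov graph of an over-twist cycle with rotation pair $(p,q)$ admits loops of every admissible intermediate rotation weight strictly between $p/q$ and $1/2$. This rests on a careful combinatorial analysis of how the $q$ points of $P$ interleave on $[0,1]$ and how their images wrap across the covering intervals; once that analysis is in hand, Steps~1 and 3 follow from now-standard pattern-forcing and Sharkovsky-type arguments.
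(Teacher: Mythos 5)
The paper does not prove Theorem~\ref{t:b11}; it quotes it from the earlier papers [Blo94--Blo95b] (\cite{B1,B2,B3}) and later observes that Theorem~\ref{t:bm2} from \cite{BM1} implies it. I therefore compare your proposal against the strategy of those cited works rather than against any proof in this paper.

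Your outline follows the right template, but there is one imprecision and one genuine gap. The imprecision: Theorem~\ref{t:b11} concerns the \emph{rotation pair} $RP(f)$ (number of orbit points mapped to the left, over the period), not the \emph{over-rotation pair} $ORP(f)$; the forcing-minimal patterns in this setting are Blokh's \emph{twist} patterns from \cite{B2}, not over-twists. A twist always has a \emph{coprime} rotation pair, so if $(p,q)$ is not coprime your Step~1 cannot deliver ``a cycle with rotation pair $(p,q)$ whose pattern is a twist'': the twist representative has the reduced pair $(m,n)$ with $m/n=p/q$.

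The genuine gap is Step~3. Writing $p=p'm$, $q=p'n$, the Sharkovsky clause requires producing cycles of rotation pair $(r'm,r'n)$ for every $r'$ with $p'\sha r'$. Reducing to Theorem~\ref{t:shar} by ``restricting $f^n$ to a suitable union of invariant components of the complement of $P$'' presupposes that $P$ has a \emph{block structure} over a cycle of rotation number $m/n$. That need not hold: there exist cycles of non-coprime rotation pair having no block structure over a twist --- exactly the \emph{badly ordered} patterns the paper discusses after Theorem~\ref{t:endpt-ot}, cf.\ \cite{alm98, bb19}. For those, the forced rotation interval contains $m/n$ in its \emph{interior}, and the required pairs $(r'm,r'n)$ come from a quantitative version of your Step~2 applied through a neighborhood of $m/n$, not from any Sharkovsky reduction. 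The proof therefore must split on this dichotomy (block structure, handled as you propose, versus badly ordered), and your outline covers only the former. Relatedly, normalizing to a twist cycle in Step~1 discards precisely the information Step~3 needs: a twist cycle's $P$-linear map has $m/n$ as an endpoint of its rotation interval and forces no non-coprime pair $(r'm,r'n)$ with $r'>1$, so the Sharkovsky clause must be established from the original cycle, not from a twist surrogate.
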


This theorem makes it possible to give a full description of the sets
of rotation pairs for continuous interval maps (as in Theorem 1.1, all
theoretically possible sets really occur), see \cite{B2}. This
description is similar to the one for circle maps of degree one (see
\cite{mis82}).

A further development came with another choice of the observable made
in \cite{BM1}. The results of \cite{BM1} imply those of [Blo94 -
Blo95b], hence from now on we will study the new invariants introduced in
\cite{BM1}. Let $f:[0,1]\to [0,1]$ be continuous, $Per(f)$ be its set
of periodic points, and $Fix(f)$ be its set of fixed points. It is easy
to see that if $Per(f)=Fix(f)$ then $\omega(y)$ is a fixed point for
any $y$. Assume from now on that $Per(f)\neq Fix(f)$ and define a
function $\chi_f=\chi$ as follows:

$$\chi(x)=\begin{cases} 1/2&\text{if $(f(x)-x)(f^2(x)-f(x))\le 0$,}\\ {0}&\text{if
$(f(x)-x)(f^2(x)-f(x))>0$.}\end{cases}$$


For any non-fixed periodic point $y$ of period $p(y)$ the integer
$l(y)=$ $\sum^{n-1}_{i=0}\chi(f^iy)$ is at most $p(y)/2$ and is the
same for all points from the orbit of $y$. The pair $orp(y)=(l(y),
p(y))$ is called the {\it over-rotation pair} of $y$, and {\it coprime
over-rotation pair} if $p,q$ are coprime. Notice that in an
over-rotation pair $(p,q)$ both $p$ and $q$ are integers and $0<p/q
\leq 1/2$.

The set of all over-rotation pairs of periodic non-fixed points of $f$
is denoted by $ORP(f)$ and the $\chi$-rotation number
$\rho_\chi(y)=\rho(y)=l(y)/p(y)$ is called the {\it over-rotation
number} of $y$. Observe that by Theorem~\ref{t:shar} and by the
assumption that $Per(f)\neq Fix(f)$ it follows that $f$ has a point of
period $2$ and that the over-rotation number of this point is $1/2$; in
other words, the set of all over-rotation numbers of periodic points of
$f$ includes $1/2$ and, therefore, $1/2$ belongs to the union of all
$\chi$-rotation sets $I_{f, \chi}(x)$ defined above.


\begin{theorem}[\cite{BM1}]\label{t:bm2} If $(p,q) \gtrdot (k,l)$ and
$(p,q)\in ORP(f)$ then $(k,l)\in ORP(f)$.
\end{theorem}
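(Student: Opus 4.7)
The plan is to follow the strategy familiar from Misiurewicz's proof of Theorem~\ref{t:misiu} for circle maps: reduce to a combinatorial statement about loops in the Markov graph of a well-chosen periodic orbit, and then produce the desired periodic orbit via the loop. Fix a cycle $P$ of $f$ with $orp(P)=(p,q)$, and write $p/q=m/n$ in lowest terms, so $p=mt_1$ and $q=nt_1$ for some $t_1\in\mathbb{N}$. There are two cases to handle: (A) $k/l<m/n$ (strictly smaller over-rotation number), and (B) $k/l=m/n$ with $(k,l)=(mt_2,nt_2)$ where $t_1\sha t_2$ in the Sharkovsky order.

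First I would pass to an \emph{over-twist} cycle. Call a cycle with coprime over-rotation pair $(m,n)$ over-twist if no cycle of any continuous interval map having over-rotation number $m/n$ has period smaller than $n$. The first step is to show that from $P$ one can extract, for $f$ or for some nearby auxiliary consideration, a cycle $Q\subset[0,1]$ with coprime over-rotation pair $(m,n)$ that is over-twist. (Existence of $Q$ in $f$ follows by choosing a cycle of minimal period among all cycles of $f$ with over-rotation number $m/n$, together with the observation that over-rotation number $m/n$ is realized by $P$ at least with period $q=nt_1$.) The second step is a structure theorem: the Markov graph $G_Q$ built on the intervals complementary to $Q$ has a distinguished ``fundamental'' loop $L_0$ of length $n$ encoding $Q$, and the minimality encoded in the over-twist condition forces $G_Q$ to contain enough additional edges (in particular loops at intervals carrying a fixed point of $f$) to realize a rich family of other loops.

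Third, for case (A), I would exhibit in $G_Q$ a loop $L$ of length $l$ such that the sum of $\chi$ along the corresponding trajectory equals $k$; then the standard ``loop $\Rightarrow$ periodic point'' lemma for Markov graphs produces a point $y$ with $orp(y)=(k,l)$. The construction of $L$ interleaves $k$ copies of a ``crossing'' sub-arc of $L_0$ (each contributing $1/2$ via $\chi$) with $l-2k$ iterations through a fixed-point loop (contributing $0$); the inequality $k/l<m/n$ is precisely what makes this count achievable. For case (B), I would apply Sharkovsky's Theorem~\ref{t:shar} to the return map of $f^n$ on an interval trapped by the fundamental loop $L_0$. Since that return map has a cycle of period $t_1$ coming from $P$, and $t_1\sha t_2$, it has a cycle of period $t_2$; lifting this cycle back to $f$ and verifying that the $\chi$-count along the length-$nt_2$ orbit equals $mt_2$ gives $orp=(mt_2,nt_2)=(k,l)$.

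The main obstacle will be the structure theorem for over-twist cycles in step two: verifying that the Markov graph of an over-twist cycle supports loops of every required combinatorial type, and in particular that there is a fixed-point loop accessible from the fundamental over-twist loop in $G_Q$. Proving this amounts to a careful case analysis of how an over-twist pattern must ``wind'' around the fixed point(s) of $f$, and is the step where the coprimality hypothesis and the minimality defining over-twist are essentially used. Once this combinatorial input is in hand, both cases (A) and (B) reduce to routine constructions with loops.
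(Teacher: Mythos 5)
This paper does not prove Theorem~\ref{t:bm2}; it is quoted from \cite{BM1} (Blokh--Misiurewicz, 1997), so there is no in-paper proof to compare your proposal against. The overall strategy you sketch --- reduce to an over-twist cycle, analyze its Markov graph, produce loops with prescribed $\chi$-count, and handle the Sharkovsky case via a return map --- is broadly the right circle of ideas, and related machinery does appear in \cite{BM1}, \cite{B1}, \cite{Bo}, \cite{bb19}. But two of your steps contain concrete errors, and the central technical step is left as an unproven assertion.

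First, your ``Case (A)'' runs in the wrong direction. From the definition in the paper, $(p,q)\gtrdot(k,l)$ for over-rotation pairs (both $\le 1/2$) means $p/q < k/l \le 1/2$, i.e.\ $k/l$ is \emph{larger} than $p/q$ (closer to $1/2$), not smaller. Small over-rotation numbers are strong, $1/2$ is weak. So the loop you need to exhibit has a \emph{higher} density of crossings relative to its length than $L_0$ does (up to $1/2$), not a lower one. Your interleaving count ``$k$ crossing sub-arcs plus $l-2k$ fixed-point steps,'' together with the claim that ``$k/l<m/n$ is precisely what makes this achievable,'' is built on the reversed inequality; with the correct inequality $k/l>m/n$ the same naive count of crossings and fixed-point steps still needs an argument that such a loop actually exists in $G_Q$, which brings us to the second problem.

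Second, the definition you give of an over-twist cycle is vacuous: if $(m,n)$ is a coprime over-rotation pair, then any cycle with over-rotation number $m/n$ automatically has period a multiple of $n$, hence period $\ge n$, so ``no cycle of over-rotation number $m/n$ has period smaller than $n$'' holds trivially and singles out nothing. The actual definition (used in this paper and in \cite{Bo}, \cite{bb19}) is that $\pi$ is an over-twist if it forces no \emph{other} pattern of the same over-rotation number. With that definition, the claim that one can ``extract'' an over-twist cycle $Q$ from $P$ is not just choosing a cycle of minimal period; it needs the fact that any cycle of over-rotation number $m/n$ forces an over-twist of that rotation number, which is itself part of the theory being built and must be argued without circularity. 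Finally, the ``structure theorem'' --- that $G_Q$ contains a fixed-point loop accessible from the fundamental loop $L_0$, and supports loops of every required combinatorial type --- is exactly where the substance of \cite{BM1} lives, and you have flagged it but not proved it; as written, the proposal reduces the theorem to an unproved lemma that is essentially equivalent in difficulty.
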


Theorem~\ref{t:bm2} implies Theorem~\ref{t:shar}. Indeed, let $f$ be an
interval map and consider odd periods. For any $2n+1$ the closest to
$1/2$ over-rotation number of a periodic point of period $2n+1$ is
$\frac{n}{2n+1}$. Clearly $\frac{n}{2n+1}<\frac{n+1}{2n+3}<\frac12$.
Hence for any periodic point $x$ of period $2n+1$ its over-rotation
pair
$orp(x)$ is $\gtrdot$-stronger than the pair $(n+1,2n+3)$, 
and by Theorem~\ref{t:bm2} the map $f$ has a point of period $2n+3$.
Also, for any $m$ we have $(n,2n+1)\gtrdot (m,2m)$, so by
Theorem~\ref{t:bm2} the map $f$ has a point of any even period $2m$.
Applying this to the maps $f,f^2,f^4,\dots$ one can prove
Theorem~\ref{t:shar} for all periods but the powers of $2$; additional
arguments covering the case of powers of $2$ are quite easy.

Theorem~\ref{t:bm2} implies a full description of sets $ORP(f)$ for
interval maps, close to that from Theorem~\ref{t:misiu}; in fact, the same description
applies to sets $RP(f)$ (see
Theorem~\ref{t:b11}) except that a set $ORP(f)$ is always located to the left of $1/2$
because over-rotation numbers are less than or equal to $\frac12$). To state the corresponding
result we introduce new notation. Let
$\mathcal{M}$ be the set consisting of 0, $1/2$, all irrational numbers
between 0 and $1/2$, and all pairs $(\alpha,n)$, where $\alpha$ is a
rational number from $(0,1/2]$ and $n\in\mathbb{N} \cup \{2^\infty\}$.
Then for $\eta\in\mathcal{M} $ the set $Ovr(\eta)$ is equal to the
following. If $\eta$ is an irrational number, 0, or $1/2$, then
$Ovr(\eta)$ is the set of all over-rotation pairs $(p,q)$ with $ \eta
<p/q \leq 1/2$. If $\eta=(r/s,n)$ with $r, s$ coprime, then $Ovr(\eta)$ is
the union of the set of all over-rotation pairs $(p,q)$ with $r/s <p/q
\leq 1/2$ and the set of all over-rotation pairs $(mr,ms)$ with $m \in
Sh(n)$. Notice that in the latter case, if $n\neq 2^\infty$, then $
Ovr(\eta)$ is equal to the set of all over-rotation pairs $(p,q)$ with
$(nr,ns)\gtrdot (p,q)$, plus $(nr,ns)$ itself. 

\begin{theorem}\label{t:ovr}
	Given a continuous interval map $f$,
	there exists $ \eta \in \mathbb{N} $ such that $ORP(f)=Ovr(\eta)$.
	Conversely, if $\eta\in\mathcal{M} $ then there exists a continuous map $f:[0,1]\rightarrow
	[0,1]$such that $ORP(f)=Ovr(\eta)$.
\end{theorem}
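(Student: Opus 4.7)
The plan is to establish the two directions of the theorem---that every $ORP(f)$ has the form $Ovr(\eta)$ and that every $\eta\in\mathcal{M}$ is realized by some $f$---using Theorem~\ref{t:bm2} as the main combinatorial input. For the forward direction, the key observation is that Theorem~\ref{t:bm2} makes $ORP(f)$ downward closed under $\gtrdot$, so the set is entirely determined by its ``lower boundary.'' I would set $\rho^-=\inf\{p/q:(p,q)\in ORP(f)\}\in[0,1/2]$ and split into cases. If $\rho^-$ is irrational or equals $0$, the claim is $ORP(f)=Ovr(\rho^-)$: the inclusion $\subseteq$ is immediate since a rational $p/q$ cannot equal an irrational $\rho^-$, and for $\supseteq$, given any over-rotation pair $(p,q)$ with $\rho^-<p/q\le 1/2$, I would pick a witness $(p',q')\in ORP(f)$ with $p'/q'\in(\rho^-,p/q)$, observe $(p',q')\gtrdot(p,q)$ via the second clause of $\gtrdot$, and conclude $(p,q)\in ORP(f)$ by Theorem~\ref{t:bm2}. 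If $\rho^-=r/s$ with $r,s$ coprime, I would first argue that $\rho^-$ is actually attained as the over-rotation number of some periodic orbit; then the set $A=\{n:(nr,ns)\in ORP(f)\}$ is Sharkovsky-upward-closed by the third clause of $\gtrdot$ together with Theorem~\ref{t:bm2}, so $A=Sh(l)$ for some $l\in\mathbb{N}\cup\{2^\infty\}$, and combining with the ``above-boundary'' argument gives $ORP(f)=Ovr((r/s,l))$.

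For the reverse direction, I would exhibit explicit realizing maps for each $\eta\in\mathcal{M}$. For $\eta=0$, the full tent map has all over-rotation pairs. For irrational $\eta\in(0,1/2)$, one constructs piecewise linear ``rotation-like'' maps modeled on an invariant Cantor set of rotation number $\eta$, verifying via Theorem~\ref{t:bm2} that no periodic orbit has smaller over-rotation number. For $\eta=(r/s,n)$, one uses the \emph{over-twist construction} of \cite{BM1}: realize a specific over-twist cycle with over-rotation pair $(lr,ls)$, where $l$ is the $\sha$-minimum of $Sh(n)$, and interpolate linearly between consecutive orbit points; then show by induction along the Sharkovsky order that the resulting map has precisely $ORP=Ovr(\eta)$.

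The main obstacle is twofold. First, in the forward direction, proving that a rational infimum $\rho^-=r/s$ is actually attained requires a compactness argument: one takes a Hausdorff limit of a sequence of periodic orbits with over-rotation numbers tending to $r/s$ and then extracts an actual periodic orbit with exactly that over-rotation number from the limit set, using the topological structure of interval dynamics. Second, in the reverse direction, the over-twist constructions require delicate combinatorial verification that the chosen pattern has precisely the prescribed over-rotation structure with no spurious ``stronger'' pairs, done by careful analysis of the pattern combinatorics and induction along the Sharkovsky order.
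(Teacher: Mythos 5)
The paper itself does not prove Theorem~\ref{t:ovr}: it is quoted as a consequence of Theorem~\ref{t:bm2} together with the earlier work cited in \cite{B1,B2,BM1}, just as the rotation-pair analogue follows from Theorem~\ref{t:b11}. So the comparison here can only be with that implicit background, and against it your overall architecture (forward direction: $ORP(f)$ is $\gtrdot$-downward-closed, so it is determined by its lower boundary; converse: realize each $\eta$ by an explicit pattern) is the right one.

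However, there is a genuine gap in the forward direction at exactly the place you flag as the ``main obstacle.'' For the case $\rho^-=r/s$ rational you need, and you claim, that $\rho^-$ is attained, i.e.\ that some periodic orbit has over-rotation number exactly $r/s$. Observe that this is not a convenience but a necessity: if $\rho^-=r/s$ were rational but not attained, then $ORP(f)=\{(p,q): r/s<p/q\le 1/2\}$, and this set is not $Ovr(\eta)$ for any $\eta\in\mathcal{M}$. Indeed $Ovr(\eta)$ for an irrational/$0$/$1/2$ value of $\eta$ has its infimum of $p/q$ equal to an irrational number or $0$, and $Ovr((r/s,n))$ always contains $(r,s)$ because $1\in Sh(n)$ for every $n\in\mathbb{N}\cup\{2^\infty\}$. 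Thus rational attainment is equivalent to the truth of the theorem in this case, and it is not supplied by Theorem~\ref{t:bm2} alone: if $p_i/q_i\to (r/s)^+$ with $(p_i,q_i)\in ORP(f)$, then none of the $(p_i,q_i)$ is $\gtrdot(r,s)$, since $\gtrdot$ only propagates \emph{towards} $1/2$ (or along the Sharkovsky order at a fixed rotation number). Your proposed remedy --- take a Hausdorff limit of periodic orbits of over-rotation number $\to r/s$ and ``extract an actual periodic orbit from the limit set'' --- does not work as stated: the Hausdorff limit of a sequence of periodic orbits whose periods necessarily go to infinity (denominators of rationals tending to $r/s$ blow up) is a compact invariant set that may well be a Cantor set on which $f$ is minimal and has no periodic points at all, and even if one knows that the limiting set carries a measure $\mu$ with $\int\chi\,d\mu=r/s$, Theorem~\ref{t:codense} only yields CO-measures \emph{approximating} $\mu$, hence periodic orbits of over-rotation number \emph{close to} $r/s$, which is what you already had. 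Closing this gap requires the finer machinery the paper alludes to (the spectral decomposition and the analysis of basic sets from \cite{blok86,blok87a,blok87b,blo95a}, together with the twist-pattern machinery of \cite{BM1,Bo}), and the paper's own remark after Theorem~\ref{t:b1} --- that the conclusion of Theorem~\ref{t:b1}(3) can fail for non-piecewise-monotone maps because of nested invariant intervals --- makes clear that this step is delicate rather than a routine compactness argument.

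The converse direction of your sketch is essentially the intended one (realize each $\eta$ by an appropriate over-twist pattern with block structure governed by the Sharkovsky order), and the combinatorial verification you describe is what the cited papers carry out; that part is not where the difficulty lies.
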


The closure of the set of over-rotation numbers of periodic points of
$f$ is an interval $I_f=[\rho_f, 1/2], 0\le \rho_f\le 1/2,$ called the
{\it over-rotation interval} of $f$. 

\subsection{The role of periodic points}
As the reader may have noticed by now, we introduced the functional
rotation numbers considering all points $x$; on the other hand, our
main focus has been on periodic points and their over-rotation numbers and
pairs. It is then natural to consider the role of functional rotation
numbers of periodic points in functional rotation sets of maps,
including their density in those sets. On the other hand, evidently,
Theorem~\ref{t:ovr} is modeled after Theorem~\ref{t:misiu}. It is then
also natural to consider other parallels between classical rotation
numbers defined for the circle maps of degree one, and over-rotation
numbers for interval maps. In fact, this is one of the main ideas of
the present paper as applies to $N$-bimodal and other similar classes
of interval maps. However first we study a different (but related)
analogy between circle maps of degree one and interval maps, namely we
study the role of periodic points and their (over-)rotation numbers in
both circle and interval cases.

Indeed, even without the complete description of possible rotation
pairs of periodic points of circle maps of degree one given in
Theorem~\ref{t:misiu}, one can show that for any degree one continuous
circle map $f$ either $f$ is monotonically conjugate to an irrational
rotation of the circle, or the rotation numbers of its periodic points
are dense in its rotation interval. This fact is related to a more
general problem of establishing the connection between the $\phi$-rotation
numbers of {\it periodic points} of a map $f$, and the
\emph{$\phi$-rotation set} $I_f(\phi)$ of the map $f$ for any function
$\phi$. We describe this connection in the case of
interval maps and circle maps of degree one (see, e.g., \cite{B3}); our
explanation is based upon the so-called ``spectral decomposition'' for
one-dimensional maps \cite{blok86, blok87a, blok87b}, \cite{blo95a} (we
will only use it for interval and circle maps).

To state the appropriate results we need a few basic definitions as
well as a couple of less standard ones (see, e.g., \cite{dgs76}). Given
a cycle $A$ of period $n$, a unique invariant probability measure
$\nu_A$ concentrated on $A$ is the measure assigning to each point of
$A$ the weight $\frac1n$; let us call $\nu_f$ a \emph{CO-measure}
\cite{dgs76} (comes from ``\textbf{c}losed \textbf{o}rbit''). Recall that for Borel
measures on compact spaces one normally considers their \emph{weak}
topology defined by the continuous functions \cite{dgs76}.

\begin{theorem}[\cite{blok86, blok87a, blok87b, blo95a}]\label{t:codense} Suppose that
$f:I\to I$ is a continuous interval map or a circle map with non-empty
set of periodic points. Then any invariant probability measure $\mu$
for whom there exists a point $x$ with $\mu(\omega_f(x))=1$ can be
approximated by CO-measures arbitrary well. In particular, CO-measures
are dense in all ergodic invariant measures of $f$.
\end{theorem}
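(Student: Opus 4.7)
The plan is to invoke the one-dimensional spectral decomposition theorem of Blokh cited in the statement. That decomposition asserts that for a continuous interval or circle map $f$ possessing a periodic point, every $\omega$-limit set $\omega_f(x)$ falls into one of three mutually exclusive types: (a) a single cycle; (b) a \emph{solenoidal} set, realized as the intersection of a nested sequence of cyclically permuted periodic intervals with periods $2^{n_k}\to\infty$, on which $f$ is uniquely ergodic; or (c) a subset of a \emph{basic set}, a closed invariant set on which periodic points are dense and which carries a horseshoe-type structure with a specification-like shadowing property. Given a $\mu$ with $\mu(\omega_f(x))=1$, I would split the argument into these three cases.

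In case (a) nothing is to be proved: the only invariant probability measure carried by a single cycle $P$ is the CO-measure $\nu_P$ itself, so $\mu=\nu_P$. In case (b) let $P_k$ denote the periodic orbit of period $2^{n_k}$ whose orbit of periodic intervals $J_k$ contains the solenoidal set $S$. For any continuous test function $g$, the integral $\int g\,d\nu_{P_k}$ is the uniform average of $g$ over $P_k$, and since the diameter of each component of the union $\bigcup J_k$ tends to zero uniformly and the components are visited in the same cyclic order, this average is asymptotically equal to the integral of $g$ against the unique invariant measure on $S$. Since by uniqueness $\mu$ must coincide with that invariant measure, we obtain $\nu_{P_k}\to\mu$ weakly.

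Case (c) is the main obstacle, since here one must actually \emph{construct} approximating cycles. I would select a $\mu$-generic point $y$ in $\omega_f(x)$ (so that the Birkhoff averages $\frac1n\sum_{i=0}^{n-1}\delta_{f^iy}$ converge weakly to a chosen ergodic component of $\mu$; for the first statement one replaces $\mu$ by its ergodic decomposition and works with one component at a time). Using the horseshoe/specification structure available on a basic set, the finite orbit segment $y,f(y),\dots,f^{n-1}(y)$ can be $\varepsilon$-shadowed by a periodic point $y_n$ whose period is close to $n$. The CO-measure $\nu_{Q_n}$ of the cycle $Q_n$ of $y_n$ is then weakly close to the empirical average along $y$, hence close to $\mu$. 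Letting $\varepsilon\to 0$ and $n\to\infty$ produces the desired sequence of CO-measures. The delicate point is producing the shadowing periodic orbits in a purely one-dimensional setting; this is precisely what the basic-set half of the spectral decomposition is designed to supply, and it is where the bulk of the work hides.

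Finally, the ``in particular'' clause is immediate: if $\mu$ is ergodic, then by Birkhoff's theorem, for $\mu$-almost every $x$ the empirical measures along the orbit of $x$ converge weakly to $\mu$, so $\mathrm{supp}(\mu)\subseteq\omega_f(x)$ and therefore $\mu(\omega_f(x))=1$. Choosing any such $x$ reduces the ergodic case to the statement already proved.
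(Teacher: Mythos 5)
The paper does not prove Theorem~\ref{t:codense}; it imports it from the cited spectral-decomposition papers of Blokh, so there is no in-text proof against which to compare your argument. That said, your sketch matches the approach of those sources: invoke the trichotomy of $\omega$-limit sets (cycle, solenoidal, subset of a basic set) and reduce CO-approximability to separate arguments in each case, with the genuine content lying in the basic-set case via a specification/shadowing mechanism and in the solenoidal case via unique ergodicity.

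One place where you should be more careful is case (b). You assert that the diameters of the components of $\bigcup_i J_k^{(i)}$ tend to zero; this is not automatic for a nested family of periodic intervals of a continuous map, and in general the intersection $\bigcap_k \operatorname{orb}(J_k)$ may contain nondegenerate intervals. What saves the argument is that when $\omega_f(x)$ is infinite and solenoidal, $\omega_f(x)$ itself is a minimal Cantor set, so the sets $\omega_f(x)\cap J_k^{(i)}$ do shrink; moreover both $\mu$ and $\nu_{P_k}$ assign mass $2^{-n_k}$ to each $J_k^{(i)}$, and the periodic orbits $P_k$ can be taken with one point in each $J_k^{(i)}$ adjacent to $\omega_f(x)\cap J_k^{(i)}$. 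Without this refinement the weak convergence $\nu_{P_k}\to\mu$ is not immediate. Your reduction of the ergodic case to the pointwise statement (via Birkhoff, $\operatorname{supp}\mu\subset\omega_f(x)$ for a generic $x$) is correct. Finally, in case (c) you rightly flag that the existence of shadowing periodic orbits in a basic set is exactly what the spectral decomposition supplies, but to call the sketch a proof one would need to quote that specification-type property precisely; as written it is a plausible roadmap rather than a complete argument, which is appropriate for a theorem the paper itself presents as black-boxed.
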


For the sake of completeness let us also state the result which
describes maps of compact one-dimensional branched manifolds (abusing
the language we will call them \emph{graphs} from now on) which do
\emph{not} have periodic points (see \cite{ak79} for the circle and
\cite{blok84, blok86, blok87a, blok87b} for maps of any graph). Observe
that we do not assume our graphs to be connected; also, to avoid trivialities
let us assume that our maps are \emph{onto} (otherwise we can simply
consider the nested sequence of images of the space and take their intersection which will still
be a graph and on which the map will be onto). A natural
one-dimensional map without periodic points is an irrational circle
rotation. An extension of that is a map which permutes (not necessarily
cyclically) a finite collection of circles so that this collections
falls into several cycles of circles and in each cycle the appropriate
power of the map fixes the circles and acts on each of them as an
irrational rotation (it is easy to see that then in each cycle of
circles it is the rotation by the same irrational angle, but these angles may
change from a cycle to a cycle). Let us call
such maps \emph{multiple irrational circle rotations}.

It turns out that multiple irrational circle rotations are prototypes
of all graph maps without periodic points. By a \emph{monotone} map of
a topological space we mean a map such that point-preimages (sometimes
called \emph{fibers}) are connected.

\begin{theorem}[\cite{blok84}]\label{t:no-per} Suppose that $f:X\to X$ is a continuous
map of a graph to itself with no periodic points. Then there exists a
monotone map from $X$ to a union $Y$ of several circles which
semiconjugates $f$ and a multiple irrational circle rotation.
\end{theorem}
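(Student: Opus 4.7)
\medskip

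\textbf{Proof proposal.} The plan is to construct, by means of a carefully chosen equivalence relation, a monotone quotient $\pi:X\to Y$ that collapses all ``tree-like'' parts of $X$ and every wandering branch, leaving only a finite disjoint union of circles on which the induced map has no periodic points, hence is an irrational rotation on each cycle of circles.

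First I would extract structural consequences of the hypothesis. Since $f$ has no periodic points and $X$ is a compact graph, (i) every $\omega$-limit set $\omega_f(x)$ is a minimal set (otherwise, by a standard argument a proper closed invariant subset combined with absence of periodic points forces a contradiction on some finite-to-one combinatorial structure coming from the graph), and (ii) there are no horseshoes, so the topological entropy of $f$ is zero. This restricts the spectral decomposition of $f$ considerably: every basic/solenoidal piece must collapse to a minimal set without periodic orbits.

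Next I would define the collapsing relation. Call a connected subset $K\subset X$ \emph{wandering} if $f^n(K)\cap f^m(K)=\emptyset$ for all $n\neq m$ sufficiently large, and declare $x\sim y$ if $x$ and $y$ lie in a common connected set $K$ all of whose forward images have diameter shrinking to zero, or they coincide. Equivalently, let $E$ be the smallest closed equivalence relation with connected classes such that $x\sim y$ whenever the orbits of $x$ and $y$ are asymptotic. I would verify (a) $E$ has connected equivalence classes, (b) $f$ descends to a continuous map $\bar f: Y\to Y$ where $Y=X/E$, and (c) the quotient map $\pi$ is monotone. Step (a) is the technical core; it uses that under zero entropy the only way two nearby orbits can stay uniformly close is through an asymptotic coupling.

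The main obstacle is to show that $Y$ is a disjoint union of finitely many circles. Concretely, I must rule out branch points (vertices of degree $\neq 2$) in $Y$. Suppose $v\in Y$ had degree $d\ge 3$. Since $\bar f$ has no periodic points, the orbit of $v$ is infinite; but a branch point permutation on the finitely many directions at $v$ (transported by iterates of $\bar f$) forces a combinatorial periodicity contradicting the absence of periodic points, or else creates a nontrivial attractor that could be further collapsed, contradicting minimality of $E$. A similar argument excludes endpoints. Thus $Y$ is a finite disjoint union of topological circles, which $\bar f$ permutes into cycles.

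Finally, on each cycle of circles $C_0,\ldots,C_{k-1}$ with $\bar f(C_i)=C_{i+1\pmod k}$, the first-return map $\bar f^k:C_0\to C_0$ is a continuous circle map without periodic points. Choosing a degree-one lift and applying the classical Poincar\'e–Auslander result (or its graph-map reformulation from \cite{blok86, blok87a, blok87b}), $\bar f^k|_{C_0}$ is monotonically conjugate to an irrational rotation. Propagating this through the cycle yields an irrational rotation angle attached to each cycle of circles, which assembles $\bar f$ into a multiple irrational circle rotation and completes the proof.
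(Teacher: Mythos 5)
The paper does not prove this theorem; it is quoted from \cite{blok84} as known background, so there is no internal proof for you to match. Evaluating your sketch on its own terms: the overall shape --- monotonically collapse tree-like and wandering parts, arrive at a finite disjoint union of circles, then invoke Auslander--Katznelson \cite{ak79} on each cycle of circles --- is the right one, but the load-bearing steps are asserted rather than proved.

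The central gap is the elimination of branch points and endpoints from the quotient $Y$. You claim that a branch point $v$ with $d\ge 3$ local directions would ``force a combinatorial periodicity contradicting the absence of periodic points,'' but a periodic point of the induced map $\bar f$ on $Y$ does not automatically yield a periodic point of $f$ on $X$: its $\pi$-fibre is only a connected $f^n$-invariant set, and one still has to argue (for instance, that it is a subtree, hence has the fixed-point property, or that it contains a circle on which some iterate of $f$ would acquire a periodic orbit). Before that, one must even know that $Y$ is a finite graph, that the set of branch points of $Y$ is $\bar f$-invariant, and that $\bar f$ is locally injective at vertices --- none of which follows from monotonicity of $\pi$ alone. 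The structural inputs you invoke at the outset (every $\omega$-limit set is minimal, $h(f)=0$) are true in this setting, but they carry real content: positive entropy on a graph produces a horseshoe and hence periodic orbits, and minimality of $\omega$-limit sets is a theorem, not an observation. Finally, the two candidate definitions of the equivalence relation $E$ are not shown to coincide, and none of closedness of $E$, connectedness of its classes, Hausdorffness of the quotient, or the quotient being a graph is verified --- and these are exactly the properties that make the collapsing step legitimate. As written this is a plan of attack, not a proof.
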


In Subsection \ref{ss:fun-rot} we explained that given a degree one circle map $f:\uc\to \uc$,
one can define the function $\phi_f$ by choosing a lifting $F:\R\to
\R$, then for any $x\in \uc$ a lifting $X$ of $x$, and then setting
$\phi_f(x)=F(X)-X$ so that $\phi_f$ which is well-defined and continuous (because
$f$ is of degree one and continuous). Classical rotation numbers and sets of
points of $\uc$ under $f$ are in fact $\phi_f$-rotation numbers and sets.
Evidently, Theorem~\ref{t:codense}, the definition of weak topology on
probability invariant measures of $f$, and the definition of the
classical rotation numbers and sets imply that rotation numbers of
periodic points are dense in the rotation set of a circle map $f:\uc\to
\uc$ of degree one provided $f$ has some periodic points. Actually, the
fact that the classical rotation set of a degree one circle map is a
closed interval can also be deduced from the ``spectral
decomposition'', however this goes way beyond the scope of the present
paper.

The situation with over-rotation numbers is similar but slightly more
complicated. The issue here is that for over-rotation numbers, the
dynamics in small neighborhoods of fixed points can play a misleading
role. To explain this, let us draw analogy with the case of the
topological entropy (see \cite{akm65} where the concept was introduced
and \cite{alm00, dgs76} for a detailed description of its properties).
It is known that for continuous interval maps it can happen so that the
entropy of such maps is large (even infinite) while it is assumed on
smaller and smaller invariant sets converging to fixed points of the
map. Similarly, it can happen that the dynamics in a small neighborhood
of, say, an attracting fixed point $a$ is chaotic in the sense that
points ``switch sides'', i.e. map from the left of $a$ to the right of
$a$, in a chaotic fashion while still being attracted to $a$. That may
lead to a rich set of sequences $\chi(f^i(x))$ and large
$\chi$-rotational sets of such points while having no bearing upon the
set of periodic points of the map at all. To avoid this ``artificial''
richness we consider only {\it admissible} points.

Namely, by a {\it limit measure} of a point $x$ we mean a limit of
ergodic averages of the $\delta$-measure concentrated at $x$; clearly,
any limit measure is invariant \cite{dgs76}. If $\mu$ is a unique limit
measure of $x$, then $x$ is said to be \emph{generic} for $\mu$. Call a
point $x$ {\it admissible} if any limit measure $\mu$ of $x$ is such
that $\mu(Fix(f))=0$ where $Fix(f)$ is the set of all fixed points of
$f$; since $\mu$ is invariant, this implies that in fact the set of all
points $x$ which are eventual preimages of fixed points of $f$ is of
zero $\mu$-measure too. Since the set of discontinuities of $\chi$ is
contained in the union of the set of fixed points $Fix(f)$ of $f$ and
their preimages, we see that for an admissible point $x$ the set of
discontinuities of $\chi$ is of zero limit measure for any limit
measure of $x$. Now, let $x$ be an admissible point. Take a number
$u\in I_{f, \chi}(x)$; the definitions and properties of measures imply
that $u=\int \chi(x)\,d\mu$ where $\mu$ is a limit measure of $x$. By
Theorem~\ref{t:codense} and by definitions $\mu$ can be approximated
arbitrarily well by a CO-measure concentrated on a non-fixed periodic
orbit. Hence $u$ can be approximated arbitrarily well by the
over-rotation number of a non-fixed periodic orbit, and so $u\in I_f$.
Thus, $I_{f, \chi}(x)\subset I_f$ as long as $x$ is admissible (see
Theorem~\ref{t:b1}).

Additional arguments allow us to prove Theorem~\ref{t:b1}, describing
the connection between $I_f$ and the pointwise $\chi$-rotation sets
$I_{f, \chi}(x)$.

\begin{theorem}[\cite{B1, B3}]\label{t:b1} The following statements are true.
\begin{enumerate}
\item If $f$ is continuous and $\rho_f<1/2$ then for any $a\in
    (\rho_f, 1/2]$ there is an admissible point $x$, generic for a
    measure $\mu$, such that $I_f(x)=\{a\}$.
\item If $x$ is an admissible point, then $I_{f, \chi}(x)\subset
    I_f=[\rho_f, 1/2]$.
\item If $f$ is piecewise-monotone and $\rho_f\ne 0$ then there
    exists an invariant measure $\mu$ such that $f$ is minimal on
    the support of $\mu$ and there exists a point $x$, generic for
    $\mu$ and such that $I_{f, \chi}(x)=\{\rho_f\}$.
\end{enumerate}
\end{theorem}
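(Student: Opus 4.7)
The plan for statement (2) is essentially assembled in the paragraph preceding the theorem. Given admissible $x$ and $u \in I_{f,\chi}(x)$, pass to a subsequence of ergodic averages $\frac{1}{n_j}\sum_{i=0}^{n_j-1}\delta_{f^i x}$ converging weakly to a limit measure $\mu$, so that $u = \int \chi\, d\mu$. Admissibility gives $\mu(Fix(f)) = 0$; by $f$-invariance of $\mu$ the entire eventual-preimage set of $Fix(f)$, which contains the discontinuity set of $\chi$, has $\mu$-measure zero. Theorem~\ref{t:codense} approximates $\mu$ weakly by CO-measures $\nu_{P_k}$ on non-fixed cycles, and since $\chi$ is $\mu$-a.e.\ continuous the Portmanteau theorem yields $\int \chi\, d\nu_{P_k} \to u$. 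Each $\int \chi\, d\nu_{P_k}$ is the over-rotation number of $P_k$, so $u$ lies in the closure of the set of over-rotation numbers of non-fixed cycles, which is $I_f = [\rho_f, 1/2]$.

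For statement (1), fix $a \in (\rho_f, 1/2]$. By Theorem~\ref{t:ovr} there are non-fixed cycles $P_k$ with over-rotation numbers $\rho(P_k) \to a$, and their CO-measures $\nu_{P_k}$ satisfy $\int \chi\, d\nu_{P_k} = \rho(P_k)$. A standard generic-point construction (concatenating progressively longer orbit segments starting near well-chosen points of the $P_k$, with small shadowing or intermediate-value adjustments so that the itinerary is realized by a genuine orbit) produces a point $x$ whose empirical measures converge weakly to a target $\mu$. By arranging the construction so that the selected $P_k$'s supports stay bounded away from $Fix(f)$ (using $a>0$), or by passing to an appropriate ergodic component, we secure $\mu(Fix(f)) = 0$ and $\int \chi\, d\mu = a$. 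Admissibility of $x$ follows from $\mu(Fix(f)) = 0$, and $I_{f,\chi}(x) = \{a\}$ follows by Portmanteau once again.

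Statement (3) splits by the arithmetic nature of $\rho_f$, and contains the main obstacle. If $\rho_f = r/s$ is rational, the description $ORP(f) = Ovr(\eta)$ in Theorem~\ref{t:ovr} forces a non-fixed cycle $P$ with over-rotation number exactly $\rho_f$, and the CO-measure on $P$ is supported on the (minimal) periodic orbit. If $\rho_f$ is irrational no cycle realizes it, and the piecewise-monotone hypothesis becomes essential: take CO-measures of cycles with $\rho(P_k) \downarrow \rho_f$, pass to a weak limit $\mu$, and invoke the spectral-decomposition machinery for piecewise-monotone maps (cf.\ Theorem~\ref{t:no-per} and the cited Blokh references) to extract an ergodic component supported on a minimal invariant set with $\int \chi\, d\mu = \rho_f$. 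The hard part is precisely this last step: for a general continuous interval map a weak limit of CO-measures need not have any minimal-support ergodic component with the correct $\chi$-integral, and one genuinely uses finiteness of turning points and the resulting decomposition into transitive and solenoidal pieces to locate a minimal ``twist'' set realizing $\rho_f$. A generic point for this measure then yields $I_{f,\chi}(x) = \{\rho_f\}$.
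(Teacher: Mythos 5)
The paper does not prove Theorem~\ref{t:b1}: it is cited verbatim from \cite{B1,B3}, and the only argument the paper itself supplies is the informal derivation of part~(2) in the paragraph immediately preceding the statement. Your treatment of part~(2) is exactly that argument, phrased cleanly: admissibility gives $\mu(Fix(f))=0$, invariance extends this to the eventual preimages of $Fix(f)$ and hence to the discontinuity set of $\chi$, Theorem~\ref{t:codense} supplies approximating CO-measures on non-fixed cycles, and the mapping form of the Portmanteau theorem converts weak convergence into convergence of $\int\chi$. That matches the paper.

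Parts~(1) and~(3) go beyond what the paper attempts, and while your outlines point in a plausible direction they each contain a real gap. In part~(1), for irrational $a$ the ``concatenate progressively longer orbit segments'' plan has two unresolved issues: on the interval you cannot concatenate orbit pieces freely, so the transition segments must be controlled both in length (so they do not corrupt the averages) and in location (so they do not inject mass near $Fix(f)$); and the fallback of ``passing to an ergodic component'' does not obviously preserve $\int\chi\,d\mu=a$, since ergodic components of a measure with $\chi$-integral $a$ need only average to $a$. In part~(3), the rational subcase is fine (Theorem~\ref{t:ovr} indeed forces a cycle of over-rotation number $\rho_f$ whenever $\rho_f$ is rational and nonzero). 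In the irrational subcase, as you say, the genuine obstacle is why a weak limit of CO-measures should have an ergodic component supported on a minimal set realizing $\rho_f$; one must also rule out the limit measure charging $Fix(f)$ at all, which is where piecewise-monotonicity enters (finitely many turning points bound how long a cycle can linger near a fixed point). You identify this as the hard step, which is accurate, but it is not closed in your write-up; it is precisely the content supplied by the references. In short: (2) is complete and matches the paper's sketch; (1) and (3) are reasonable roadmaps with acknowledged holes where the cited proofs do the real work.
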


Theorem~\ref{t:b1}(3) cannot be extended for all continuous interval
maps as one can design a map $f$ which has a sequence of invariant
intervals with their ``own'' maps that have increasing to $[u,
\frac12]$ over-rotation intervals; evidently, for such a map $f$ the
conclusions of Theorem~\ref{t:b1} do not hold.

In a recent paper by Jozef Bobok [Bo] the case covered in Theorem \ref{t:b1}(3)
is studied in great detail and depth resulting into a much more precise
claim. Recall that a dynamical system is said to be {\it strictly
ergodic} if it has a unique invariant measure. To state Theorem \ref{t:bo1} in
full generality we need a couple of notions on which we will elaborate
later in Subsection~\ref{ss:combi1}. Namely, a cyclic permutation $\pi$
{\it forces} a cyclic permutation $\theta$ if a continuous interval map
$f$ which has a cycle inducing $\pi$ always has a cycle inducing
$\theta$. By \cite{Ba} forcing is a partial ordering. One can talk
about the {\it over-rotation pair} $orp(\pi)$ and the {\it
over-rotation number} $\rho(\pi)$ of a cyclic permutation $\pi$. We
call a cyclic permutation $\pi$ an {\it over-twist} if it does not
force other cyclic permutations of the same over-rotation number.

\begin{theorem}[\cite{Bo}]\label{t:bo1} Let a point $x$ and a measure $\mu$ be
as defined in Theorem~\ref{t:b1}(3). Then the map $f|_{\omega(x)}$ is
strictly ergodic with $\mu$ being the unique invariant measure of
$f|_{\omega(x)}$. Moreover, if $\rho_f$ is rational then $x$ is
periodic and can be chosen so that the permutation induced by the orbit
of $x$ is an over-twist of over-rotation number $\rho_f$.
\end{theorem}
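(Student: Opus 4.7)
The plan is to prove the two assertions in turn: first the strict ergodicity of $f|_{\omega(x)}$, then the construction of a periodic $x$ with over-twist pattern in the rational case.

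First, I would observe that $\omega(x)=\mathrm{supp}(\mu)$: genericity of $x$ for $\mu$ gives $\mathrm{supp}(\mu)\subseteq\omega(x)$, while minimality of $f|_{\mathrm{supp}(\mu)}$ combined with closedness and $f$-invariance of $\omega(x)$ forces equality. For strict ergodicity, let $\nu$ be any ergodic $f$-invariant probability on $\omega(x)$. By Theorem~\ref{t:codense}, $\nu$ is a weak limit of CO-measures $\nu_{Q_n}$ sitting on periodic orbits $Q_n$, and the integrals $\int\chi\,d\nu_{Q_n}$, being over-rotation numbers of $f$, belong to $I_f=[\rho_f,1/2]$. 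Passing to the limit gives $\int\chi\,d\nu\geq\rho_f$, but this inequality by itself does not pin $\nu$ down. Here I would invoke piecewise monotonicity of $f$ in an essential way: on a minimal invariant set of a piecewise monotone interval map, the number of ergodic invariant measures is bounded by the combinatorial complexity (number of laps), and an inspection of how any such $\nu$ would interact with the turning points of $f$ inside $\omega(x)$, coupled with minimality, shows that all candidate $\nu$ collapse onto $\mu$.

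Second, suppose $\rho_f=p/q$ with $p,q$ coprime. I would exhibit an $f$-cycle whose pattern is an over-twist of over-rotation number $p/q$ and then take the generic point $x$ to lie on this cycle. Existence proceeds combinatorially: by Theorem~\ref{t:ovr} applied to the infimum $\rho_f$ of $I_f$, the family $\mathcal{F}$ of cyclic patterns actually realized by cycles of $f$ and having over-rotation number $p/q$ is non-empty; since forcing is a partial order (by \cite{Ba}) and the relevant patterns lie in a finite combinatorial set (the periods that contribute to a single over-rotation number $p/q$ are controlled by Sharkovsky's order on multiples of $q$), $\mathcal{F}$ has a minimal element $\pi$, which by definition is an over-twist. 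Pick a cycle $Q$ of $f$ realizing $\pi$, set $\mu:=\nu_Q$ and let $x\in Q$. Then $\omega(x)=Q$, the map $f|_Q$ is automatically minimal and strictly ergodic with unique invariant measure $\nu_Q$, and $I_{f,\chi}(x)=\{p/q\}=\{\rho_f\}$, so $(x,\mu)$ satisfies the hypotheses of Theorem~\ref{t:b1}(3) while $x$ is periodic with over-twist pattern of the required over-rotation number.

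The hard step is the strict ergodicity claim when $\omega(x)$ is an infinite (Cantor) minimal set: the soft limit argument only yields $\int\chi\,d\nu\geq\rho_f$, so one truly needs piecewise monotonicity to rule out additional ergodic measures. The likeliest tool is a Denjoy-type or kneading-theoretic uniqueness result for minimal piecewise monotone one-dimensional systems, which bounds the number of ergodic invariant measures by the number of monotonicity intervals; extracting and applying this cleanly is the real work. By contrast, the rational over-twist construction is essentially an application of Baldwin's partial order to the finite family of patterns guaranteed by Theorem~\ref{t:ovr}.
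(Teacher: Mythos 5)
This theorem is cited from Bobok's paper \cite{Bo}; the present paper does not prove it, so there is no in-paper proof to compare against. Nevertheless, your attempt has genuine gaps that prevent it from constituting a proof.

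The central gap, which you yourself flag, is the strict ergodicity claim when $\omega(x)$ is a Cantor set. Your appeal to a ``Denjoy-type or kneading-theoretic uniqueness result'' bounding the number of ergodic invariant measures of a minimal piecewise-monotone system by the number of monotonicity intervals is not a theorem you can lean on: minimal subshifts (hence minimal invariant Cantor sets of piecewise-monotone interval maps of low modality) can support many ergodic invariant measures, so minimality plus finitely many laps does not by itself give unique ergodicity. What actually drives Bobok's result is not a generic rigidity fact about piecewise-monotone minimal sets but the specific circle-rotation-like combinatorial structure of the minimal set that realizes the left endpoint $\rho_f$ of the over-rotation interval — essentially the structure that Theorem~\ref{t:bimo-twist} and Theorem~\ref{t:well-behaved} later extract explicitly in the (bi)modal and well-behaved cases, where $f|_{Z_f}$ is at most two-to-one semi-conjugate to an irrational rotation. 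Without establishing that structure for the $\omega(x)$ of Theorem~\ref{t:b1}(3), the soft estimate $\int\chi\,d\nu\ge\rho_f$ is all you have, and, as you note, it does not pin down $\nu$.

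There is also a flaw in the rational-case argument. You assert that the family $\mathcal{F}$ of patterns with over-rotation number $p/q$ realized by $f$ ``lies in a finite combinatorial set'' because the periods are controlled by Sharkovsky's order on multiples of $q$, and then extract a forcing-minimal element. But $\mathcal{F}$ is typically infinite: for each $m\ge 1$ there can be cycles of period $mq$ with over-rotation pair $(mp,mq)$, hence over-rotation number $p/q$. The existence of a forcing-minimal pattern of a given over-rotation number does hold (the paper states it as a consequence of Theorem~\ref{t:bm2} and the partial-order property of forcing), but it requires more than ``a finite poset has a minimal element.'' Finally, note that the statement asserts both that $x$ is periodic (for the given $x$ of Theorem~\ref{t:b1}(3)) and that $x$ \emph{can be chosen} with over-twist pattern; your construction only produces one periodic choice, it does not address why an arbitrary $x$ satisfying the hypotheses of Theorem~\ref{t:b1}(3) must already be periodic when $\rho_f$ is rational.
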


The above results allow one to make conclusions about the dynamics of
an interval map based upon little information: if one knows $\rho_f$
then one can, e.g., describe all possible over-rotation numbers of
$f$-periodic points (except, in the non-piecewise monotone case, the
number $\rho_f$ itself if it is rational). In other words, numerical
information about a map, compressed to $I_f$, implies various types of
the limit behavior of periodic points reflected by their rotation
numbers. Can one say more? In particular, can we explicitly describe
at least some permutations induced by periodic orbits of $f$? By definition the
affirmative answer can be given if one can explicitly describe the
over-twists of given over-rotation numbers. In addition, it is
important to design a practical approach (an algorithm) to figuring out what the
over-rotation interval of a map $f$ is. 

In this paper we address these issues for bimodal interval maps of the
type ``increasing-decreasing-increasing'' (so-called \emph{$N$-bimodal
maps} or bimodal maps \emph{of type N}). The paper develops ideas from
\cite{BS} (the results of \cite{BS} are described in
Section~\ref{s:prelim}). In particular, one of the tools used in
\cite{BS} was a special disconnected conjugacy of a unimodal map to a
discontinuous map of the interval which can be lifted to the degree one
discontinuous map of the real line; in the present paper we show that
this tool apply to a wider class of maps, including $N$-bimodal ones.

Our paper is divided into sections as follows:

\begin{enumerate}

\item Section 2 contains preliminaries.

\item In Section 3 we will show that, given an $N$-bimodal map $f$ we can
    construct its lifting to a degree one map of the real line
    which admits a continuous monotonically increasing lower bound
    function $G$ whose classical rotation number gives us the
    left endpoint of the over-rotation interval of $f$.

\item In Section 4, as an application we will describe the bimodal
    permutations which are forcing-minimal among all permutations
    with the same over-rotation number (i.e., $N$-bimodal
    \emph{over-twist permutations}).

\item In Section 5, we describe a general class of continuous maps,
    called \textit{well behaved maps}, for which a construction
    similar to the one from Section 3 goes through allowing for
    finding the orbit on which the left endpoint of the
    over-rotation interval is assumed. If $f$ is a \textit{map}
    like that and the over-rotation interval of $f$ is $I_f =
    [\frac{p}{q}, \frac{1}{2}]$ where $ \frac{p}{q} \in \mathbb{Q},
    p,q \in \mathbb{Z}, g.c.d(p,q)=1, q\neq 0$ then our
    construction gives a transparent prescription as to where a
    periodic orbit $x$ of $f$ with over-rotation number
    $\frac{p}{q}$ must be located.

\end{enumerate}


\section{ PRELIMINARIES }\label{s:prelim}

This section is divided into short subsection devoted to certain topics
in one-dimensional dynamical systems.

\subsection{Combinatorial dynamics in one-dimension}\label{ss:combi1}

We need definitions from {\it one-dimensional combinatorial dynamics}
(\cite{alm00}). A map $f$ has a {\it horseshoe} if there are two closed
intervals $I, J$ with disjoint interiors whose images cover their
union. In particular, $f$ has a horseshoe if there exist points $a,b,c$
such that either $f(c)\le a=f(a)<b<c\le f(b)$ (set $I=[a, b], J=[b,
c]$) or $f(c)\ge a=f(a)>b>c\ge f(b)$ (set $I=[b, a], J=[c, b]$). It is
easy to see \cite{BM1} that if a map has a horseshoe then it has
periodic points of all possible over-rotation numbers. A {\it (cyclic)
pattern} is the family of all cycles on the real line that induce the
same cyclic permutation of the set $T_n=\{1,2,\dots,n\}$ or its flip; a
map (not even necessarily one-to-one) of the set $T_n$ into itself is
called a {\it non-cyclic pattern}. If one considers the family of all
cycles on the real line that induce the same cyclic permutation (i.e., one
does not allow for a flip), this family is called a \emph{cyclic \textbf{oriented} pattern}.
If an interval map $f$ has a cycle
$P$ from a pattern $\Pi$ associated with permutation $\pi$, we say that
$P$ is a {\it representative} of $\pi$ in $f$ and $f$ {\it exhibits}
$\pi$ (on $P$); if $f$ is {\it monotone (linear)} on each complementary
to $P$ interval, we say that $f$ is {\it $P$-monotone ($P$-linear)}
\cite{MN}. In what follows the same terminology will apply to
permutations, patterns and cycles, so for brevity we will be
introducing new concepts for, say, permutations. Observe also, that permutations
are understood up to orientation. Finally, notice that in what follows
we will interchangeably talk about permutations and patterns.

A permutation $\pi$ is said to have a {\it block structure} if there is
a collection of pairwise disjoint segments $I_0, \dots, I_k$ with
$\pi(T_n\cap I_j)=T_n\cap I_{j+1}, \pi(T_n\cap I_k)=T_n\cap I_0$; the
intersections of $T_n$ with intervals $I_j$ are called {\it blocks} of
$\pi$. A permutation without a block structure is said to be with
\emph{bo block structure}, or, equivalently, {\it irreducible}. If we
collapse blocks to points, we get a new permutation $\pi'$, and then
$\pi$ is said to have a block structure {\it over $\pi'$}. A
permutation $\pi$ {\it forces} a permutation $\theta$ if any continuous
interval map $f$ which exhibits $\pi$ also exhibits $\theta$. By
\cite{Ba} forcing is a partial ordering. If $\pi$ has a block structure
over a pattern $\theta$, then $\pi$ forces $\theta$. By \cite{MN} for
each permutation $\pi$ there exists a unique irreducible pattern $\pi'$ over
which $\pi$ has block structure (thus, $\pi'$ is forced by $\pi$).

The following construction is a key ingredient of one-dimensional
combinatorial dynamics. Let $\pi$ be a (non-cyclic) permutation, $\Pi$
be its pattern, $P$ be a finite set with a map $f:P\to P$ of pattern
$\Pi$, and $f$ be a $P$-linear map; assume also that the convex hull of
$P$ is $[0, 1]$. Say that the closure $\ol{I}$ of a component $I$ of
$[0, 1]\setminus P$ {\it $\pi$-covers} the closure $\ol{J}$ of another
such component $J$ if $\ol{J}\subset f(\ol{I})$. Construct the oriented
graph $G_\pi$ whose vertices are closures of the components of $[0,
1]\setminus P$ and whose edges (arrows) go from $\ol{I}$ to $\ol{J}$ if
and only if $\ol{I}$ $\pi$-covers $\ol{J}$. Clearly, $G_\pi$ does not
depend on the actual choice of $P$ and the definition is consistent.

A cycle is {\it divergent\/} if it has points $x<y$ such that $f(x)<x$
and $f(y)>y$. A cycle that is not divergent will be called {\it
convergent\/}. It is well-known that if a pattern is divergent then for
any cycle $P$ of this pattern the $P$-linear map has a horseshoe; on
the other hand, it is easy to see that if a pattern is convergent then
a cycle $P$ of this pattern cannot give rise to the $P$-linear map with
a horseshoe.

\subsection{Rotation theory on the interval}\label{ss:rot1}

One can talk about the {\it over-rotation pair} $orp(\pi)$ and the {\it
over-rotation number} $\rho(\pi)$ of a permutation $\pi$. We call a
permutation $\pi$ an {\it over-twist permutation} (or just an {\it
over-twist}) if it does not force other permutations of the same
over-rotation number; the pattern of an over-twist permutation is said
to be an \emph{over-twist} pattern. Theorem~\ref{t:bm2} and the
properties of forcing imply the existence of over-twist patterns of any
given rational over-rotation number between $0$ and $1$; in fact, it
implies that a map which has a periodic point of rational over-rotation
number $\rho$ exhibits an over-twist pattern of rotation number $\rho$.
By Theorem~\ref{t:bm2} an over-twist pattern has a coprime
over-rotation pair; in particular, over-twists of over-rotation number
$1/2$ are of period $2$, so from now on we consider over-twists of
over-rotation numbers distinct from $1/2$.

Suppose that $\pi$ is a convergent pattern and that $P$ is a periodic
orbit of pattern $\pi$. Let $f$ be a $P$-linear map. Then $f$ has a
unique fixed point $a$. Consider the set $Q=P\cup \{a\}$ and denote its
pattern by $\pi'$. We will work with the oriented graph $G_{\pi'}$.
Suppose that there is a real-valued function $\psi$ defined on arrows
of $G_{\pi'}$. It is well-known \cite{alm00} that the maximal and the
minimal averages of $\psi$ along all possible paths (with growing
lengths) in $G_{\pi'}$ are assumed, in particular, on periodic
sequences. If the values of $\psi$ on arrows are all rational, then the
maximum and the minimum of those averages are rational too.

We choose a specific function $\psi$ as follows. Associate to each
arrow in $G_{\pi'}$ the number $1$ if it corresponds to the movement of
points from the right of $a$ to the left of $a$. Otherwise associate
$0$ to the arrow. As explained above, this yields rational maximum and
rational minimum of limits of averages of $\psi$ taken along all
possible paths (with growing lengths) in $G_{\pi'}$, and these extrema
are assumed on periodic sequences. Given a cycle of $f$, one can
consider its cycles and compute out for them their over-rotation
numbers; simultaneously, $\psi$-rotation numbers can be computed out
for the associated paths (loops) in the oriented graph $G_{\pi'}$.
Evidently, the over-rotation numbers of $f$-cycles and the
$\psi$-rotation numbers of the associated loops in $G_{\pi'}$ are the
same.

We also need to introduce some classical concepts.

\begin{definition}\label{d:deg1}
A map $F:\R\to \R$ is said to be of degree $1$ if $F(x+1)-F(x)=1$ for
any $x\in \R$.
\end{definition}

Classical results of Poincar\'e \cite{poi} apply to all monotonically
increasing maps of the real line of degree one \cite{rt86} for whom
every point $y\in R$ has the same classical rotation number defined as
the limit of the sequence $F^n(x)/n$. 	


\section{$N$-BIMODAL MAPS}\label{s:nbimo}

Given a rational number $\rho, 0<\rho\le 1/2$, we want to describe all
over-twist patterns of $N$-bimodal type of over-rotation number $\rho$.
In the beginning of this section we outline our approach to the
problem.

Our arguments  are based upon an extension of a construction from
\cite{BS} onto the $N$-bimodal case. This gives rise to a special
lifting of a given $N$-bimodal interval map $f$ to a degree one
\emph{discontinuous} map $F_f$ of the real line. The construction is
designed to guarantee that over-rotation numbers of $f$-periodic points
of the interval coincide with the classical Poincar\'e rotation numbers
of the corresponding points under $F_f$. Even though the classical tools
of \cite{mis82} do not apply to $F_f$ (after all, $F_f$ is discontinuous),
our construction implies the existence of a \emph{continuous
non-strictly monotonically increasing} function $G_f\le F_f$ with important
properties.

Namely, the monotonicity of $G_f$ implies that $G_f$ is semiconjugate
to a circle map $\tau_f:\uc\to \uc$ and $\tau_f$ is monotone (i.e.,
point preimages under $\tau_f$ are connected) circle map which is
either locally constant or locally increasing (we consider
counterclockwise direction on $\uc$ as positive). One can define the
$G_f$-rotation number for every point $y\in \mathbb R$, and for all
$y$'s this number will be the same; denote it by $\rho'_f$. The set
$A_f$ of points $y$ such that $G_f(y)<F_f(y)$ is, evidently, open; it
follows from the construction, that $G_f$ is a constant on each
component of $A_f$. It is well-known that then there exist points $x$
whose $G_f$-trajectories avoid $A_f$, and if $\rho'_f$ is rational then
there exists a point $x$ on which $G_f$ acts so that the associated to
$x$ points $x'\in [0, 1]$ and $x''\in \uc$ are both periodic.
Evidently, the classical rotation pair of $x''$ and the over-rotation pair of
$x'$ coincide. This implies that the over-rotation pair of $x'$ is coprime.

Moreover, we have that (1) $G_f\le F_f$, (2) the rotation numbers of
points of $F_f$ equal the over-rotation numbers of the corresponding
points of $[0, 1]$, (3) on $x$ and, inductively, on all its images, we
have $F_f=G_f$, and (4) the classical rotation number $\rho'_f$ of $G_f$
can be computed on the orbit of $x$. This implies that the left
endpoint $\rho_f$ of the over-rotation interval of $f$ equals $\rho'_f$ and that it
is assumed on the $f$-orbit of $x'$. We use results of \cite{bb19} to
deduce then that the pattern of $x'$ is an over-twist. The classical
rotation numbers of $x$ in the sense of $F_f$ and in the sense of $G_f$
are the same because the maps are the same on the trajectory of $x$.
Any $f$-periodic point $y\in [0, 1]$ has its over-rotation number equal
to the rotation number of $y$ in the sense of $F_f$; since $G_f\le
F_f$, it is greater than or equal to that of $x$ (on whose trajectory
$G_f=F_f$). Hence on the trajectory of $x$ the over-rotation number is
minimal among all cycles of $f$.

\begin{definition}\label{d:nbimo}
By an $N$-bimodal map we shall mean a continuous map $f:[0,1]\to [0,1]$
which satisfies the following properties:

\begin{enumerate}

\item  f has a unique fixed point $a_f$, a point of local maxima
    $M_f$, and a point of local minima $m_f$ such that $M_f<a_f<m_f$;

\item $f(M_f)= 1$ and  $f(m_f)=0$.

\end{enumerate}

\end{definition}

If $f(0)>a_f>f(1)$ then it is easy to see that the interval $[0, a_f]$
maps into the interval $[a_f, 1]$ and vice versa. Clearly, the
over-rotation interval of $f$ is degenerate and coincides with
$\{1/2\}$. We consider this case as trivial and do not deal with it in
the rest of the paper. Thus, from now we assume that the point $a_f$
has preimages from at least one side; we may assume that $f(1)\ge a_f$
and, therefore, there is always a preimage $a''_f$ of $a_f$ with
$m_f>a''_f>a_f$. We will also use the following notation. Let $d_1(f)$
be the unique point in the interval $(M_f, a_f)$ such that
$f(d_1(f))=f(1)$. Clearly, $d_1(f)$ exists by the intermediate value
theorem. Similarly, let $d_2(f)$ be the unique point in the interval
$(a_f, m_f)$ such that $f(d_2(f))= f(0)$ ($d_2(f)$ is actually defined
only if $f(0)\le a_f$, otherwise we assume $d_2(f)$ to be undefined).The $N$-bimodal map in Case$1$ and Case$2$ are shown in Figure \ref{Nbimodal:Case1} and \ref{Nbimodal:Case2} respectively.

\begin{figure}[H]
	\caption{\textbf{\textit{An N-bimodal map $f$ in case when $d_2(f)$ is defined}}}
	\centering
	\includegraphics[width=0.55\textwidth]{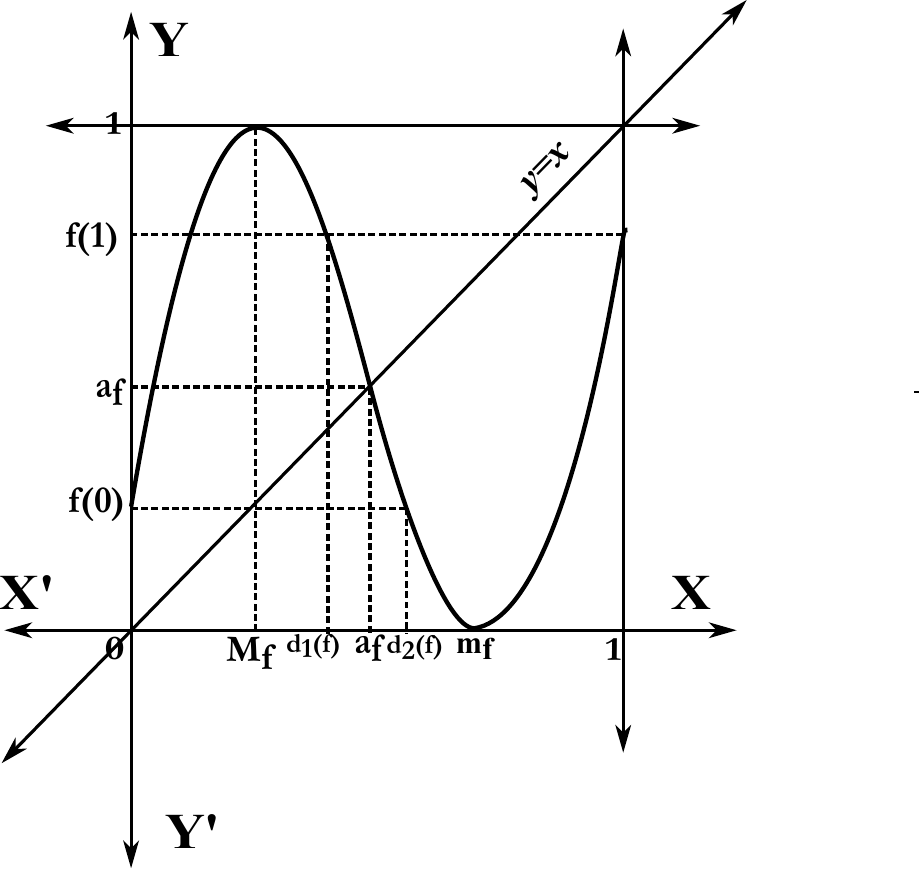}
	\label{Nbimodal:Case1}
\end{figure}

\begin{figure}[H]
	\caption{\textbf{\textit{An N-bimodal map $f$ in case $d_2(f)$ is not defined}}}
	\centering
	\includegraphics[width=0.55\textwidth]{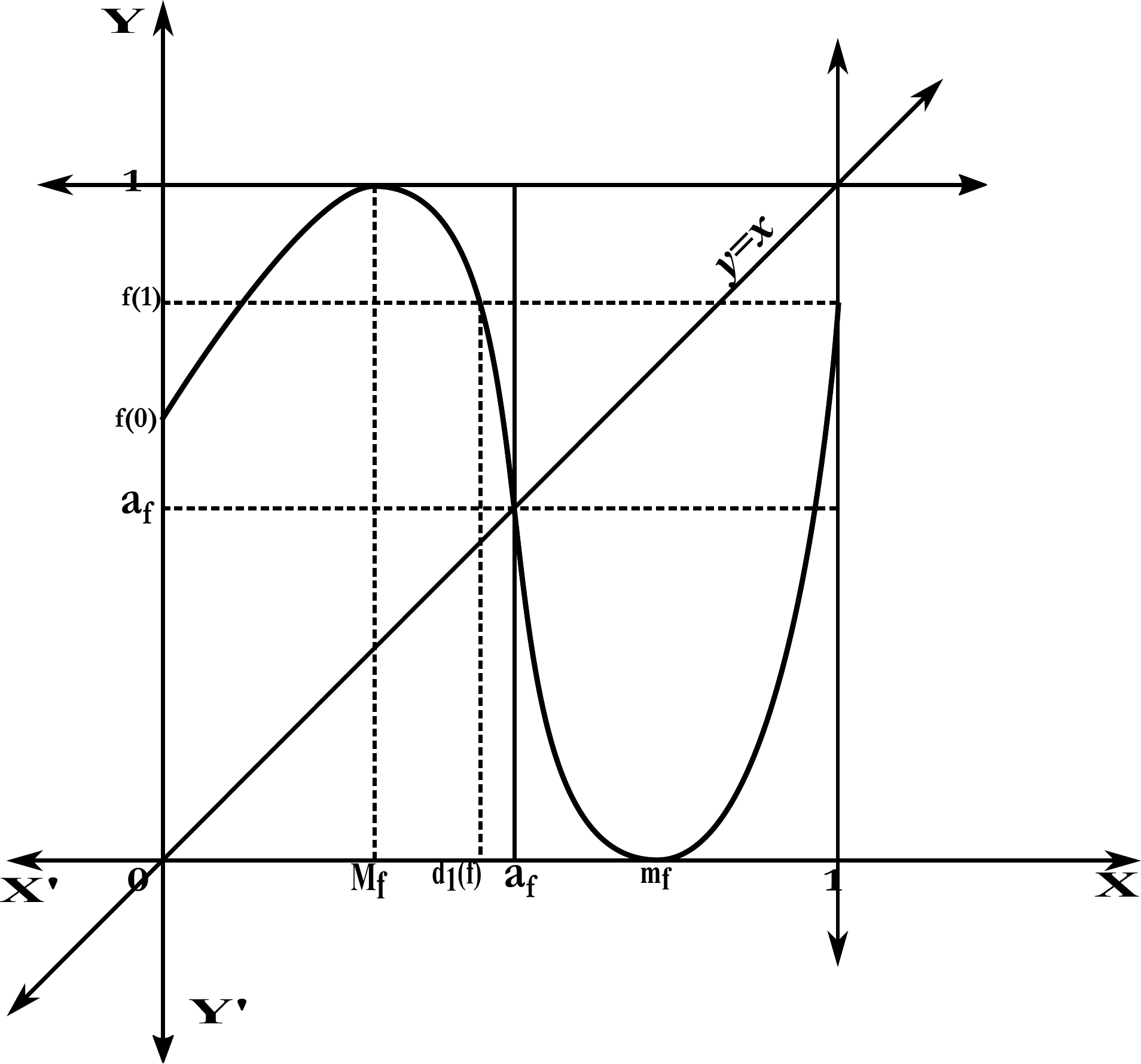}
	\label{Nbimodal:Case2}
	
\end{figure}

\subsection{Disconnected lifting of an $N$-bimodal map
$f$}\label{ss:discon}

In this subsection we construct a few maps all of which are based upon a
given $N$-bimodal map $f$. For the sake of simplicity of notation we often
are not using subscript $f$; this is justified because at this point
there are no other maps and, therefore, omitting the subscript will not
lead to ambiguity. However in the future we may occasionally use
subscripts to emphasize dependence of our construction upon a given
map.

Consider the map $g_f=\sigma_f \circ f \circ \sigma_f^{-1}$ where the map
$\si_f$ is defined below. However, before defining $\si$ we need to make
an observation. Our aim is to study periodic \emph{non-fixed} points of
$f$ and their over-rotation numbers; we do it by working with periodic
\emph{non-fixed} points of $g_f$. Therefore the behavior of $f$ at $a_f$
is not important for us. This allows us to ignore the fact that with
the adopted below definitions the map $\si_f$ and, as a result, the
function $g_f$ is multivalued at $a_f$ and its preimages (indeed, this
does not have any bearing upon the over-rotation numbers of
\emph{non-fixed} $f$-periodic points as they never map to $a_f$). Let
us now denote the over-rotation interval $I_f$ of $f$ by $I_f=[\mu,
\frac{1}{2}]$, and consider a discontinuous conjugacy $\sigma_f :[0,1]\to
[0,1]$ defined by

\begin{equation}
\sigma_f(x)=
\begin{cases}
				
x & \text{if $0\le x\le a_f$}\\
a_f+1-x  &   \text{if $a_f\le x\le 1$}\\
				
\end{cases}
\end{equation}

The map $\si_f$ flips the interval $[a_f, 1]$ symmetrically with respect
to the midpoint $\frac{1+a_f}{2}$ of $[a_f, 1]$ so that $\si_f^2$ is the
identity on the entire $[0, 1]$. We now define the map $g_f:[0,1]\to
[0,1],$ $g=\sigma_f \circ f \circ \sigma_f^{-1}$ to which $\si_f$ conjugates
the map $f$. In what follows by $\si_f'$ we mean the map $\si_f$ restricted
upon $[a_f, 1]$; moreover, if we flip points of the plane in the
vertical direction with respect to the line $y=\frac{1+a_f}{2}$ we
shall say that we apply \emph{vertical $\si_f'$}, and if we flip points
of the plane in the horizontal direction with respect to the line
$x=\frac{1+a_f}{2}$ we shall say that we apply \emph{horizontal
$\si_f'$}.

\medskip

\noindent \emph{Case 1: $f(0)\le a_f.$}\, Then $a_f$ has two preimages,
$a_f' $ and $a_f''$, and we have $0<a_f'<M_f<a_f<m_f<a_f''<1$. Let us
now describe the graph of the function $g_f$ by giving the expression for
$g_f(x)$ depending on the location of $x$.

(a) On the interval $[0, a_f']$, $g_f(x)=f(x)$, that is, the graph of $g_f$
is the same as the graph of $f$ in the interval $[0, a_f']$.

(b) On the interval $[a_f',a_f]$, $\sigma_f(x)=x$ and $f(\sigma_f(x))\ge
a_f$ so that $g_f(x)=a_f + 1 - f(x)$; the graph of $g_f$ is obtained from
that of $f$ by applying the vertical $\si_f'$ to it.

(c) On the interval $[a_f, a_f+1-a_f'']$, $g_f(x)= a_f + 1 - f(a_f +
1-x)$. In other words, this part of the graph of $g_f$ can be obtained
from the part of the graph of $f$ located above the interval $[a''_f,
1]$ by first applying the horizontal $\si_f'$ to it, and then applying
the vertical $\si_f'$ to it.

(d) On the interval $[a_f+1-a_f'',1]$ , $g_f(x)=f(a_f+1-x)$. So, the
graph of g in the interval, $[a_f,1]$ can be obtained from the graph of
$f$ located above $[a_f, a'_f]$ by applying the horizontal $\si_f'$ to
it.

It immediately follows from the definitions, that $I_f=I_g$. Hence in
studying over-rotation numbers of periodic points we can concentrate
upon the map $g_f$. We do so by defining a \emph{degree one} lifting $F_f$
of $g_f$ to the real line. The lifting is designed so that the classic
rotation numbers of points of $F_f$ corresponding to periodic points of
$g_f$ in fact equal over-rotation numbers of these periodic points of
$g_f$.

Here is how we define a degree one lifting $F_f:\mathbb{R}\to
\mathbb{R}$;the idea is to keep $F_f=g_f$ everywhere except for the
interval $[a_f+1-a_f'', 1]$ located to the right of $a_f$ on which
points are mapped to the left of $a_f$ so that when we compute out the
corresponding over-rotation number the number $1$ should be added:

\begin{equation}
F_f(x)=
\begin{cases}
			
g_f(x)=f(x) & \text{if $0\leq x \leq a_f'$}\\
g_f(x)=a_f+1-f(x)  &   \text{if $a_f' \leq x \leq a_f$}\\
g_f(x)=a_f+1-f(a_f+1-x) & \text{if $a_f \leq x <a_f+1-a_f''$ }\\
g_f(x)+1= 1+ f(a_f+1-x) & \text{if $a_f+1-a_f'' \leq x\le 1$}\\
\end{cases}
\end{equation}

Then we extend $F_f$ onto the real line as a degree one map; this means
that on each $[n, n+1] $ where $n \in \mathbb{Z}$ we set
$F_f(x+n)=F_f(x)+n$ for all $n \in \mathbb{Z}$ and $x \in [0,1]$.
The graph of the $F_f$ is shown in Figure \ref{F:Case1}.

\begin{figure}[H]
	\caption{\textbf{\textit{Construction of the map  $F_f$ for an N-bimodal map $f$ in Case 1}}}
	\centering
	\includegraphics[width=1\textwidth]{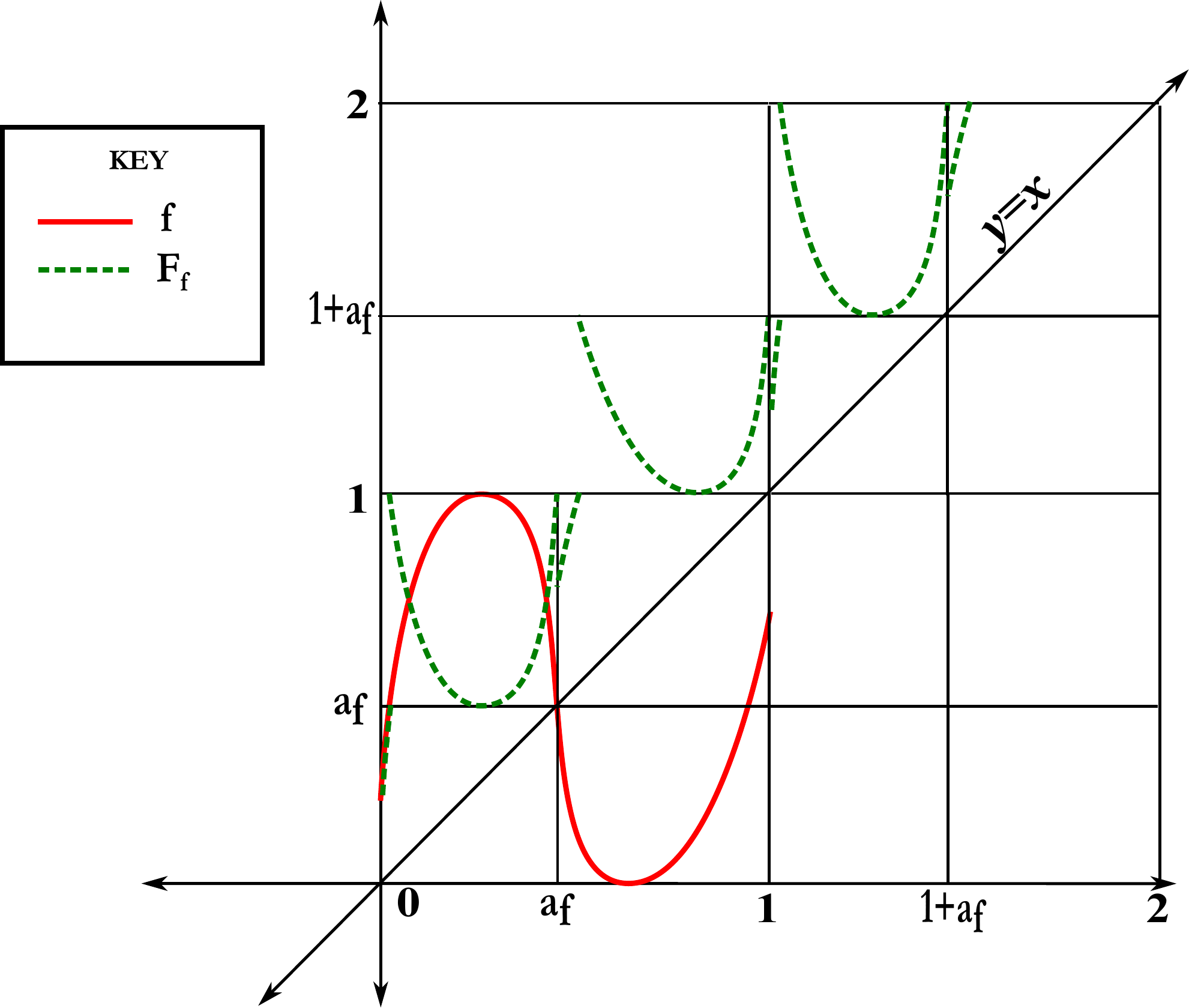}
	\label{F:Case1}
\end{figure}

The last step in this series of maps is a continuous map $G_f:
\mathbb{R}\to \mathbb{R}$.

\begin{figure}[H]
	\caption{\textbf{\textit{Construction of the map  $G_f$ for an N-bimodal map $f$ in Case 1}}}
	\centering
	\includegraphics[width=1.2\textwidth]{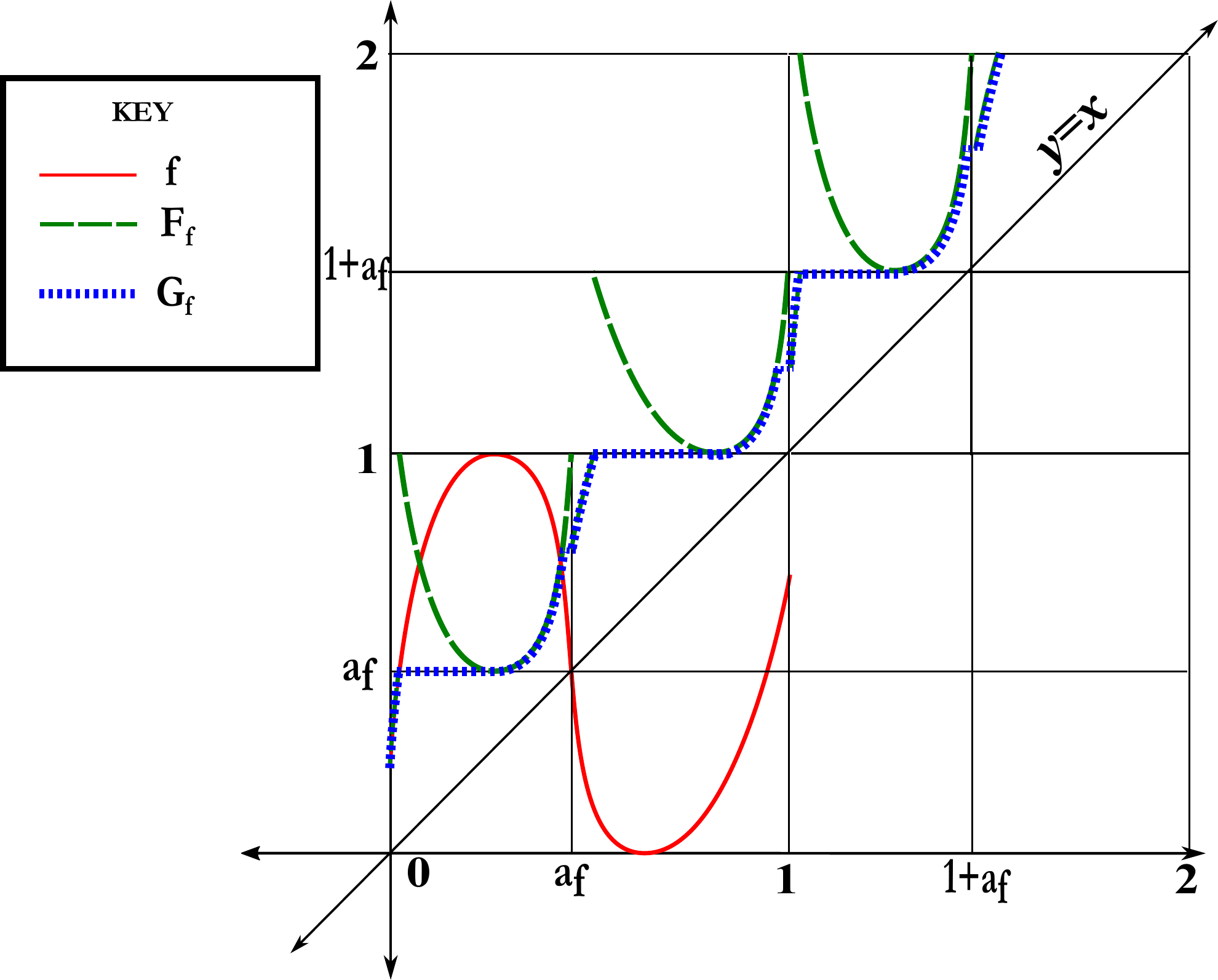}
	\label{G:Case1}
\end{figure}

The map $G_f$ is defined on each $[n, n+1], n \in \mathbb{Z}$
in the following fashion:

\begin{equation}
G_f(x)=
\begin{cases}

F_f(x) & \text{if $n\leq x \leq n+a_f'$}\\
n+a_f  &   \text{if $n+a_f' \leq x \leq n+M_f$}\\
F_f(x)  & \text{if $n+M_f \leq x \leq n+d_1(f)$}\\
F_f(d_1) & \text{if $n+d_1(f) \leq x \leq n+a_f$ }\\
F_f(x) & \text {if $n+a_f \leq x \leq n+a_f+1-a_f''$}\\
n+1 & \text {if $ n+a_f+1-a_f'' \leq x \leq n+a_f+1-m_f$ }\\
F_f(x) & \text {if $ n+a_f+1-m_f \leq x \leq n+a_f+1-d_2(f)$}\\
F_f(n)+1 & \text{if $n+a_f +1- d_2(f) \leq x \leq n+1$}\\

\end{cases}
\end{equation}

The graph of $G_f$ is shown in the Figure \ref{G:Case1}.
						
\noindent \emph{Case 2: $f(0)> a_f.$}\, In this case $a_f$ has only one
preimage, namely $a_f''$, and we have $0<M_f<a_f<m_f<a_f''<1.$ The
points $a'_f$ and $d_2(f)$ are undefined. The functions $g_f,$ $F_f$ and
$G_f$ will be slightly different. On the interval $[0,a_f]$, $g_f(x)= a_f +
1 - f(x)$, i.e. the graph of $g_f$ can be obtained from the graph of $f $
by applying vertical $\si_f$. The function $g_f$ is same as in the earlier
case on the interval $[a_f, 1]$.The map $F_f:\mathbb{R}\to \mathbb{R}$
is now defined as follows:

\begin{equation}
F_f(x)=
\begin{cases}

g_f(x)=a_f+1-f(x)  &   \text{if $ 0 \leq x \leq a_f$}\\
g_f(x)=a_f+1-f(a_f+1-x) & \text{if $a_f \leq x <a_f+1-a_f''$ }\\
g_f(x)+1= 1+ f(a_f+1-x) & \text{if $a_f+1-a_f'' \leq x\le 1$}\\
\end{cases}
\end{equation}

Then, as before, we define $F_f$ on each $[n, n+1],$ $n \in \mathbb{Z}$
as $F_f(x+n)=F_f(x)+n$ for all $n\in \mathbb{Z}$ and $x \in [0,1]$.
 The graph of $F_f$ is shown in Figure \ref{F:Case2}.		
\begin{figure}[H]
\caption{\textbf{\textit{Construction of the map  $F_f$  for an N-bimodal map $f$ in Case 2}}}
				\centering
				\includegraphics[width=1\textwidth]{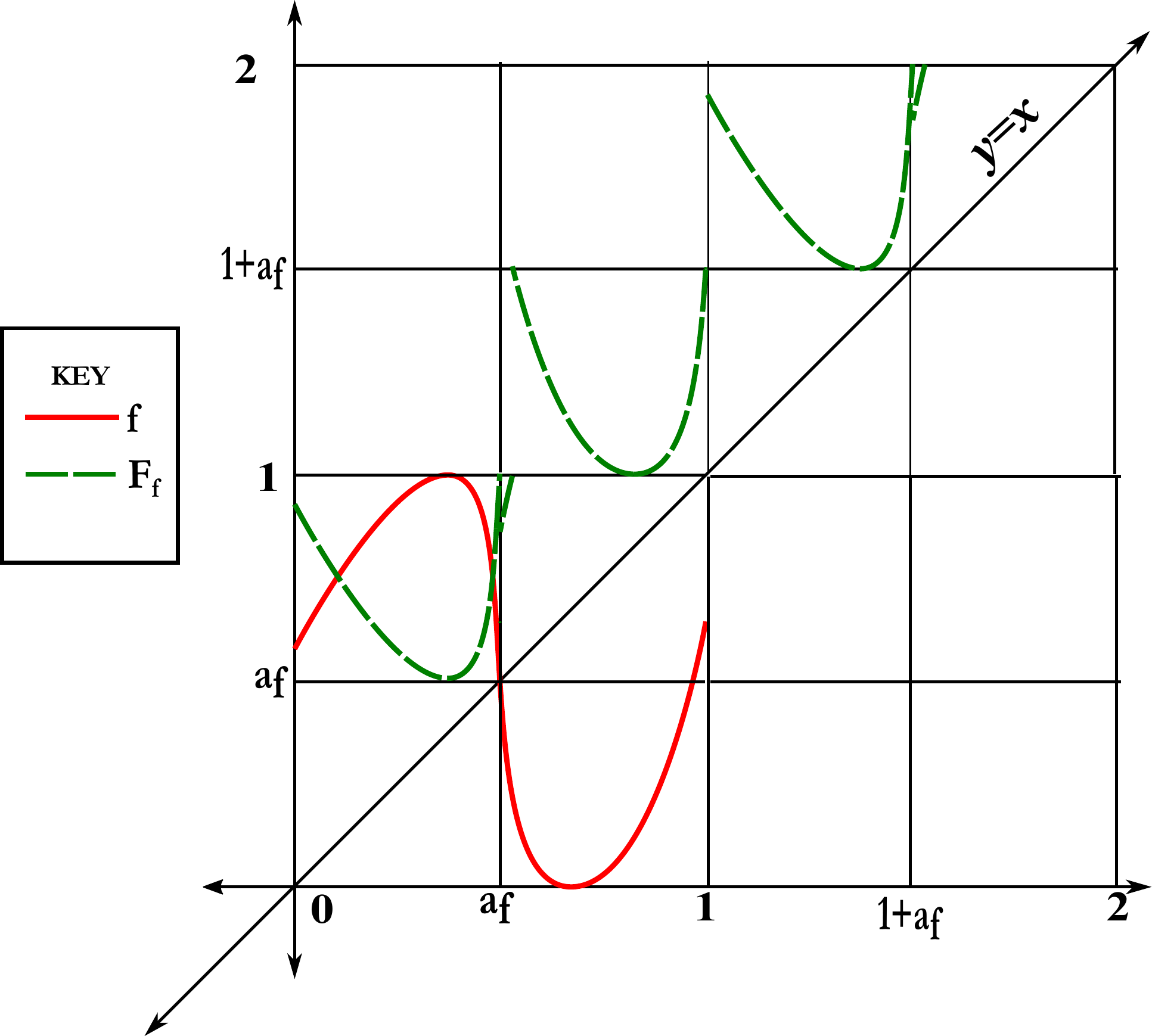}
				\label{F:Case2}
			\end{figure}
	
		On each $[n, n+1],$ $n \in \mathbb{Z}$ the map $G_f:\mathbb{R}\to \mathbb{R} $
will now be defined as follows:

\begin{equation}
G_f(x)=
\begin{cases}

n+a_f  &   \text{if $n \leq x \leq n+M_f$}\\
F_f(x)  & \text{if $n+M_f \leq x \leq n+d_1(f)$}\\
F_f(d_1) & \text{if $n+d_1(f) \leq x \leq n+a_f$ }\\
F_f(x) & \text {if $n+a_f \leq x \leq n+a_f+1-a_f''$}\\
n+1 & \text {if $ n+a_f+1-a_f'' \leq x \leq n+a_f+1-m_f$ }\\
F_f(x) & \text {if $ n+a_f+1-m_f \leq x \leq n+1$}\\

\end{cases}
\end{equation}

The graph of $G_f$ is shown in Figure \ref{G:Case2}.

\begin{figure}[H]
	\caption{\textbf{\textit{Construction of the map  $G_f$ for an N-bimodal map $f$ in Case 2}}}
	\centering
	\includegraphics[width=1.1\textwidth]{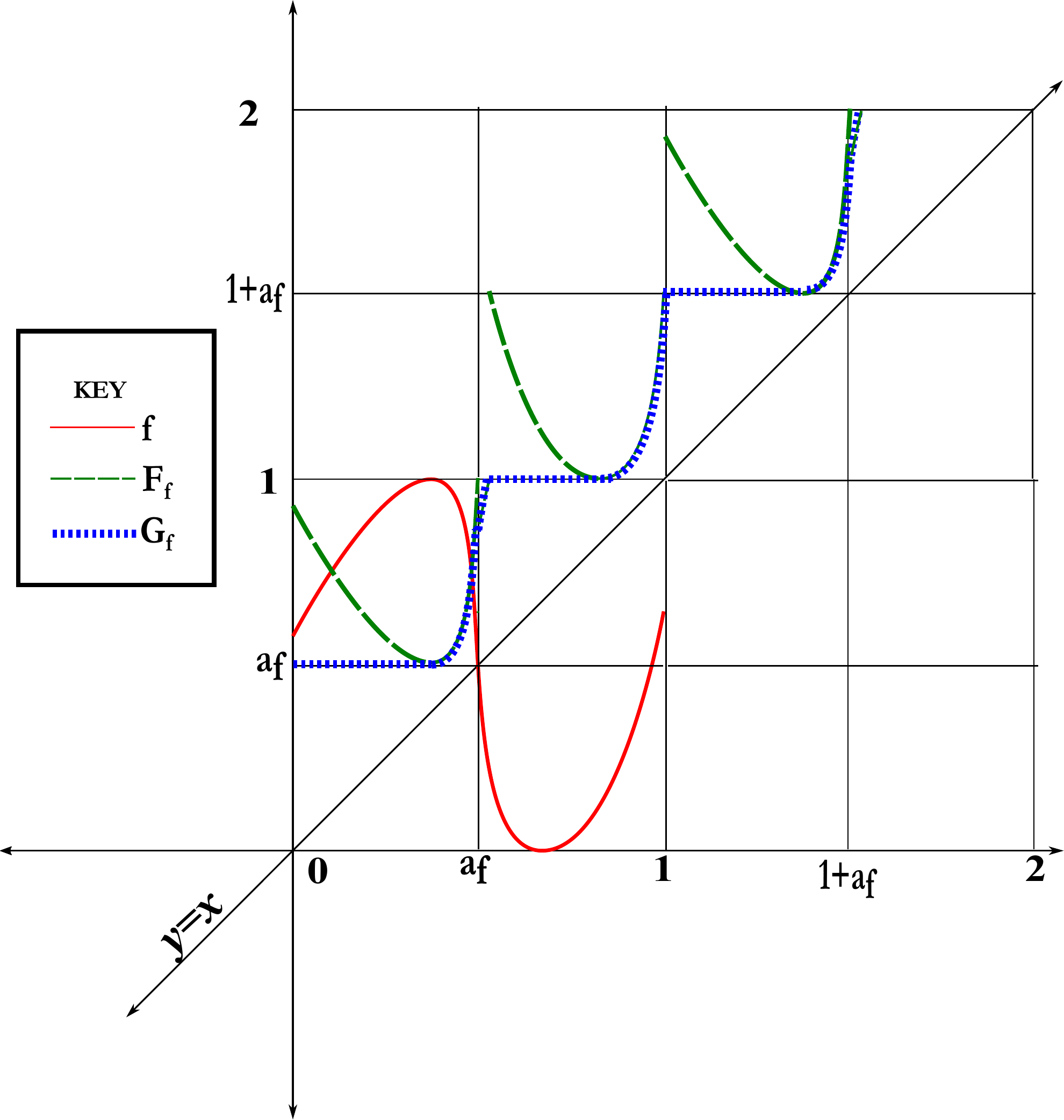}
	\label{G:Case2}
\end{figure}

		In what follows we will consider the relation of the classical
		Poincar\'e rotation numbers of points of the real line under $F_f$ and
		over-rotation numbers of points of $[0, 1]$ in the sense of the map $f$
		(equivalently, the map $g_f$). We reserve the just introduced
		notation for the maps $g_f,$ $F_f$ and $G_f$ (assuming that a map $f$
		is given). Notice, that, as one can see from the above, we will often
		deal with (continuous) functions on the real line that have open
		intervals on which the functions are constants. Let us call
		maximal such interval \emph{flat spots} (of the corresponding
		function). Moreover, the same terminology trivially applies to circle
		maps too.
		
		\subsection{Rotation numbers for $f$ and $F$}\label{ss:rot-f-F}
		We are ready to prove the next theorem that relates the rotation
		numbers of the above constructed maps. By a \emph{minimal} set of a map $h$
		of a compact space to itself we mean an invariant compact set $Z$ all
		of whose points have dense trajectories in $Z$ (thus, a compact
		invariant subset of $Z$ coincides with $Z$). For the map $f$ considered
		in Theorem~\ref{t:bimo-twist} we assume that all agreements and
		notation discussed in the beginning of Section~\ref{s:nbimo} hold; this
		time, however, we will use the subscripts to emphasize the dependence
		of the construction upon a given map. Moreover, it is useful to compare
		this theorem with Theorem~\ref{t:bo1}.

\begin{theorem}\label{t:bimo-twist}
Let $f:[0, 1]\to [0, 1]$  be an $N$-bimodal map. Then the continuous
non-strictly monotonically increasing function $G_f:\mathbb{R}\to
\mathbb{R}$ has the rotation number $\rho'_f$ coinciding with the left endpoint
$\rho_f$ of the over-rotation interval $[\rho_f, \frac12]$ of the map
$f$. Furthermore, there exists a minimal $f$-invariant set $Z_f$ such
that for every point $y\in Z_f$ we have $I_{f, \chi}=\rho_f$, and there
are two possibilities:

\begin{enumerate}

\item $\rho'_f=\rho_f$ is rational, $Z_f$ is a periodic orbit, and
    $f|_{Z_f}$ is canonically conjugate to the circle rotation by $\rho_f$
    restricted on one of its cycles so that the over-rotation pair of $Z_f$ coincides
    with the classical rotation pair of the circle rotation by $\rho_f$
    (in particular, the over-rotation pair of $Z_f$ is coprime) and the over-rotation interval
    of the $Z_f$-linear map is $[\rho_f, 1/2]$;

\item $\rho'_f=\rho_f$ is irrational, $Z_f$ is a Cantor set, and $f|_{Z_f}$
    is canonically at most two-to-one semi-conjugate to the circle
    rotation by $\rho_f$.

\end{enumerate}

\noindent Moreover, define the set $Y_f$ as
\[
Y_f=[0, a'_f]\cup [M_f, d_1(f)]\cup [d_2(f), m_f]\cup [a_f'', 1]
\]

\noindent if $f(0)\le a_f$ and, therefore, $a_f'$ exists and is
well-defined, or as

\[
Y_f=[M_f, d_1(f)]\cup [a_f, m_f]\cup [a''_f, 1]
\]

\noindent if $f(0)>a_f$. Then $Z_f\subset Y_f$ and for every point $y$
whose trajectory is contained in $Y_f$ we have that $I_{f,
\chi}(y)=\{\rho_f\}$.
\end{theorem}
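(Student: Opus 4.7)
My plan is to exploit the built-in hierarchy $G_f \le F_f$ together with the classical Poincar\'e theory for monotone degree-one maps of the real line. Because $G_f$ is continuous, nondecreasing, and of degree one, it has a well-defined classical rotation number $\rho'_f = \lim_n G_f^n(y)/n$ independent of $y$, and it projects to a continuous nondecreasing circle map $\tau_f$ of the same rotation number. The whole argument will show $\rho'_f = \rho_f$ and at the same time identify the required minimal $f$-invariant set $Z_f$ as one on which $F_f$ and $G_f$ agree.

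For the easy direction I would first verify, by a direct inspection of the pieces in the definitions of $F_f$ and $G_f$, that $G_f \le F_f$ on $\mathbb{R}$, with equality on the complement of the interiors of the flat spots of $G_f$ and in particular at every flat-spot endpoint. Monotonicity of $G_f$ together with $G_f \le F_f$ then gives, by induction, $G_f^n \le F_f^n$ pointwise. Any $f$-periodic point of period $n$ and over-rotation pair $(\ell, n)$ lifts, via $\sigma_f$ and the canonical projection, to a point $y \in \mathbb{R}$ satisfying $F_f^n(y) = y + \ell$; then $G_f^n(y) - y \le \ell$, so $\rho'_f \le \ell/n$. Taking the infimum over all $f$-cycles yields $\rho'_f \le \rho_f$.

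To produce $Z_f$ and the reverse inequality, I would apply the classical theory for monotone circle maps to $\tau_f$, which is monotonically semiconjugate by some $h$ to the rigid rotation by $\rho'_f$, with $h$ at most two-to-one. If $\rho'_f = p/q$ is rational, $\tau_f$ has periodic points that can be chosen outside the interiors of its flat spots, using the fact that at every flat-spot endpoint $G_f = F_f$ and these endpoints form a forward-invariant skeleton. Lifting gives a $G_f$-periodic orbit of coprime rotation pair $(p, q)$; since $G_f = F_f$ along the orbit, the orbit is also $F_f$-periodic, and pulling back by $\sigma_f^{-1}$ yields an $f$-periodic orbit $Z_f$ with over-rotation pair $(p, q)$. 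This both proves $\rho_f = \rho'_f$ and gives the claimed canonical conjugacy of $f|_{Z_f}$ to the circle rotation by $\rho_f$ restricted to a single cycle. If $\rho'_f$ is irrational, the unique $\tau_f$-minimal set is a Cantor set (or all of $\mathbb{S}^1$) disjoint from the interiors of flat spots; pulling back through $\pi$ and $\sigma_f^{-1}$ yields the Cantor set $Z_f$, at most two-to-one semiconjugate to the rigid rotation by $\rho_f$.

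To close, a direct case-by-case comparison shows that $\sigma_f(Y_f)$ equals the complement in $[0, 1]$ of the interiors of the flat spots of $G_f$; hence any $f$-orbit contained in $Y_f$ lifts to an $F_f$-orbit that coincides with a $G_f$-orbit and therefore has $\chi$-rotation set $\{\rho'_f\} = \{\rho_f\}$, which immediately gives $Z_f \subset Y_f$ and $I_{f, \chi}(y) = \{\rho_f\}$ for every $y$ whose trajectory lies in $Y_f$. The remaining claim that in the rational case the $Z_f$-linear map has over-rotation interval $[\rho_f, 1/2]$ I would derive by invoking the results of \cite{bb19} characterizing over-twist patterns via the over-rotation intervals of their $P$-linear models. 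The main obstacle I anticipate is the careful bookkeeping at the endpoints of flat spots: one must ensure that the chosen $\tau_f$-periodic orbit (in the rational case) or the $\tau_f$-minimal Cantor set (in the irrational case) really avoids the interiors of flat spots, so that $F_f$ and $G_f$ coincide along it, and one must keep track of the at-most-two-to-one nature of the semiconjugacy $h$ coming from the identification $\sigma_f(1) = \sigma_f(a_f) = a_f$.
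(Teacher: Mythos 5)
Your proposal follows essentially the same route as the paper's proof: use $G_f \le F_f$ together with monotonicity and degree-one structure to get $\rho'_f \le \rho_f$, then apply the classical Rhodes--Thompson/Poincar\'e theory to the monotone circle map $\tau_f$ induced by $G_f$ to produce the minimal set, pull it back through $\sigma_f$ to get $Z_f$, and conclude $\rho_f \le \rho'_f$ together with the claimed (semi-)conjugacy. The split into rational/irrational cases, the identification of $Y_f$ with the complement of the flat-spot interiors, and the appeal to \cite{bb19} for the final over-twist claim all match the paper.

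The one place where your argument has a genuine gap is exactly the point you flag at the end: the treatment of flat-spot endpoints. You assert that ``at every flat-spot endpoint $G_f = F_f$ and these endpoints form a forward-invariant skeleton,'' and you use this to claim the $\tau_f$-periodic orbit (or Cantor minimal set) can be arranged to have coinciding $G_f$- and $F_f$-orbits. But several of the flat-spot endpoints of $G_f$ are precisely the integer translates of $a'_f$, $a_f$, and $a''_f$ --- that is, preimages of $a_f$ --- and these are the points where $F_f$ is discontinuous and multivalued, so the equality $G_f = F_f$ there is not meaningful; nor is the forward invariance of the endpoint set established (for instance $d_1(f)$ maps under $G_f$ to $F_f(d_1)$, which need not be an endpoint of any flat spot). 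The paper circumvents this by introducing the conjugacy $\psi_f$, valid only off the closures of the flat spots, and then handling the case when $A_f$ hits a flat-spot endpoint $b$ by choosing $y$ near $b$ off the flat spots, transporting the finite orbit segment $y, h_f(y), \dots, h_f^q(y)$ through $\psi_f$, and passing to the limit as $y \to b$. That limiting argument (together with the observation that the lift of $A_f$ stays away from small neighborhoods of $a_f$ because $\rho_f < 1/2$) is what makes the construction of $Z_f$ legitimate when $A_f$ touches a flat-spot boundary; your proposal needs something equivalent to close the gap rather than a forward-invariance claim about endpoints.
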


\begin{proof}
In Subsection \ref{ss:discon} we introduced the maps $F_f$ and $G_f$; in doing so, we considered Case and Case 2 depending upon whether $f(0)<a_f$ (Case 1) or $f(0)\ge a_f$ (Case 2). In the proof of this theorem we will consider only Case 1 as Case 2 is completely analogous to Case 2.

Evidently, the classical rotation numbers and rotation pairs of the map
$F_f$ coincide with the over-rotation numbers and over-rotation pairs
of the map $g_f$, and hence with the over-rotation numbers and
over-rotation pairs of the map $f$. Thus, the classical rotation set
$I_{F_f}$ of the map $F_f$ coincides with $I_f=[\rho_f, \frac{1}{2}]$.
Moreover, comparing the maps $F_f$ and $G_f$ we see that:

			\begin{enumerate}

				\item $F_f=G_f$ except for the collection
$\mathcal{C}$ of intervals $(n+a_f', n+M_f),$ $(n+d_1(f), n+a_f),$
$(n+a_f+1-a_f'', n+a_f+1-m_f),$ $(n+a_f+1-d_2(f), n+1),$ $n \in
\mathbb{Z},$ on each of which $G_f$ is a constant;

				\item $G_f(x) \leq F_f(x)$ $\forall x \in
\mathbb{R}$;

				\item $G_f$ is continuous and non-strictly
monotonically increasing.

			\end{enumerate}

By \cite{rt86}, for every $z\in \mathbb{R}$ ,  $\displaystyle
\lim_{n\to \infty} \frac{G_f^{n}(z)}{n}=\rho'_f$ exists and is independent of
$z$; $\rho'_f$ is the \emph{classical rotation number} of the
map $G_f$. Since $G_f$ is of degree one, it induces a degree one map
$\tau_f$ of the unit circle $\uc$. Since the map $G_f$ is non-strictly
monotonically increasing, then $\tau_f$ preserves the cyclic orientation
in the non-strict sense; equivalently, one can say that $\tau_f$ is monotone
and locally non-strictly increasing (we consider counterclockwise direction
on the circle as positive). By \cite{alm00}, there exists a point $z\in
[0,1)$ whose orbit is disjoint from the union of open intervals from
$\mathcal{C}$ (thus, the $G_f$-orbit of $z$ is the same as the $F$-orbit
of $z$) and has the following properties:

\begin{enumerate}

\item if $\rho'_f$ is rational then $\pi(z)$ is periodic and in terms of circular order
$\tau_f$ acts on the orbit of $z$ as the rotation by the angle $\rho'_f$;

\item if $\rho'_f$ is irrational, then $\pi(z)\in \omega_{\tau_f}(z)$ where
$\omega_{\tau_f}(z)$ is a \emph{minimal} set (i.e. $\tau_f$-orbits of all
points of $\omega_{\tau_f}(z)$ are dense in $\omega_{\tau_f}(z)$) such that
collapsing arcs of $\uc$ complementary to $\omega_{\tau_f}(z)$ we can
semi-conjugate $\tau_f$ to the (irrational) rotation of $\uc$ by the angle
$\tau_f$.

\end{enumerate}

In the end this construction yields a (semi-)conjugation of the
original map $f$ on the limit set $\omega_f(z)$ and the rotation by
$\tau_f$ on a special set, say, $A_f$ so that (a) if $\rho'_f$ is rational
then $z$ is $f$-periodic, $A_f$ is a periodic orbit, and we deal with
conjugation, while (b) if $\rho'_f$ is irrational then $\omega_f(z)$ is a
Cantor set, $A_f=\uc$, and we deal with semi-conjugacy which is at most
two-to-one. In either case the (semi-)conjugacy acts as follows (in our
explanation we assume that the circle is normalized so that its length
is $1$): the points of $\omega_f(z)$ that belong to $[0, a_f]$ are put
on the arc $[0, a_f]$ of the circle maintaining the same order while
the points of $\omega_f(z)$ that belong to $[a_f, 1]$ are put on the
circle arc $[a_f, 1]$ in the reverse order.

It follows that in either case the map $f|_{\omega_f(z)}$ has a unique
invariant measure $\mu_f$ (i.e., it is \emph{strictly ergodic}), every
point $x\in \omega_f(z)$ is admissible, and for every point $x\in
\omega_f(z)$ we have $I_{f, \chi}(x)=\{\rho'_f\}$. Moreover, in both
cases the measure $\mu_f$ can be transformed, in a canonical fashion,
to a specific invariant measure related to the circle rotation by the
angle $\rho'_f$: in the rational case the corresponding measure is just
a CO-measure concentrated on the $f$-periodic orbit of $z$ whereas in
the irrational case it corresponds, in a canonical fashion, to the
Lebesgue measure on the unit circle invariant under the irrational
rotation by the angle $\rho'_f$.

Let us now relate $\rho'_f$ and the left endpoint $\rho_f$ of the
over-rotation interval $[\rho_f, \frac12]$ of $f$. By the above and by
Theorem~\ref{t:b1} we see that $\rho'_f\in I_f=[\rho_f, \frac12]$, and
hence $\rho_f\le \rho'_f$. On the other hand, $G_f$ is monotonically
increasing and $G_f \leq F_f$ which by induction implies that
$G_f^n(X)\le F_f^n(X) \forall n$. Indeed, the base of induction is the
fact that, by construction, $G_f\le F_f$ . Assume that the desired
inequality is proven for $n$; then, for every $X\in \R$, we have

$$G_f^n(G_f(X)) \leq G_f^n(F_f(X)) \leq F_f^n(F_f(X))\, \forall n \in \mathbb{N}$$

$$\implies G_f^{n+1}(X) \leq F_f^{n+1}(X)\, \forall n \in \mathbb{N}, X \in
\mathbb{R}$$

\smallskip

\noindent which proves the desired inequality. This implies that

$$\displaystyle \lim_{n\to \infty}
\frac{G_f^{n+1}(X)}{n}=\rho'_f \leq \lim_{n\to \infty} \frac{F_f^{n+1}(X)}{n}$$


\noindent which implies that $\rho'_f\le \rho_f$. Thus,
$\rho'_f=\rho_f$.

Since $G_f$ is non-decreasing and continuous, the map $G_f:\R\to \R$
can be monotonically semi-conjugate to a monotone circle map $h_f$ of
degree one. It is well-known (see, e.g., \cite{alm00}) that there
exists a closed invariant set $A_f$ of $h_f$ on which the rotation
number $\rho_f$ is realized; moreover, $A_f$ can be chosen to avoid
flat spots of $h_f$. Then there are two possibilities depending on
whether $\rho_f$ is rational or irrational. If $\rho_f=p/q$ is rational
with $p, q$ given in lowest terms (i.e., $p, q$ are coprime) then $A_f$
can be chosen to be a \emph{periodic orbit} of period $q$. It lifts to
the $G_f$-orbit $A_f'$ of a point $x\in \R$ such $G_f^q(x)=x+p$ so that
the classical rotation pair of $x$ under the degree one map $G_f$
(equivalently, $F_f$) is $(p, q)$. If $\rho_f$ is irrational then $A_f$
can be chosen to be a Cantor set and $h_f$ is monotonically
semi-conjugate to an irrational rotation of the circle by the map
collapsing flat spots of $A_f$.

We need to find the appropriate $f$-invariant set $Z_f$ associate to
the set $A_f$ whose existence is claimed in the theorem. In general the
situation is complicated here because of the fact that the map $F_f$
associated with $f$ does not have to coincide with $G_f$ at points of
the lifting of the set $A_f$ to the real line; in other words, in
general the set $A_f$ is not easily transformed to a closed invariant
set of $f$ on which the over-rotation $\rho$ is realized. However the
specifics of the construction allow us to circumvent these
complications.

Indeed, observe that the correspondence between the maps involved in
our construction implies the existence of a continuous conjugacy
$\psi_f$ between $f$ and $h_f$ applicable outside of the closures of
flat spots of $h_f$; the map $\psi_f$ sends orbits of $h_f$ to orbits
of $f$ while keeping the same (over-)rotation numbers in both cases.
Now, if $\rho_f$ is irrational, we can choose a point $y\in A_f$ that
avoids closures of flat spots of $h_f$ altogether. It follows that the
point $\psi_f(y)$ gives rise to its $f$-limit set $Z_f$, and since the
lifting of the set $A_f$ to the real line stays away from a small
neighborhood of $a_f$ and its integer shifts (this follows from the
fact that $\rho_f<1/2$), then there is a continuous conjugacy between
$f|_{Z_f}$ and $h_f|_{A_f}$. It is then easy to see that $Z_f$ has all
the desired properties.

Suppose now that $\rho_f$ is rational. If the corresponding periodic
orbit $A_f$ of $h_f$ avoids closures of flat spots of $h_f$ we are done
by the arguments similar to the ones from the previous paragraph.
Suppose now that $A_f$ passes through an endpoint $b$ of a flat spot of
$h_f$. Choose a point $y$ very close to $b$ avoiding flat spots of
$h_f$. Then the finite segment of the $h_f$-orbit of $y$ consisting of
points $y,$ $h_f(y)$, $\dots,$ $h_f^q(y)\approx y$ is transformed by
$\psi_f$ into a finite segment $\psi_f(y),$ $f(\psi_f(y)),$ $\dots,$
$f^q(\psi_f(y))\approx \psi_f(y)$ which converges to an $f$-periodic
orbit as $y\to b$; since in this case the lifting of $A_f$ also stays
away from small neighborhoods of $a_f$ and its integer shifts, the
limiting transition is legitimate and the limit periodic orbit $Z_f$ of
$\psi_f(y),$ $f(\psi_f(y)),$ $\dots,$ $f^q(\psi_f(y))\approx \psi_f(y)$
has all the desired properties. The remaining claims of the theorem
easily follow from the above analysis and are left to the reader.

Observe that, in case of a rational $\rho_f$,
since ultimately $Z_f$ is associated with
a cycle on the circle with a map that acts as a rotation, and since the over-rotation
pair of $Z_f$ coincides with the classical rotation pair on that cycle, it follows that
the over-rotation pair of $Z_f$ is coprime. Moreover, if we apply the proven above results
to the $Z_f$-linear map $\psi$, it follows that $Z_f$ can play the role of the $Z_\psi$; in particular
this implies that the over-rotation interval $I_\psi$ of $\psi$ equals $[\rho_f, 1/2]$ as desired.
\end{proof}

Theorem~\ref{t:bimo-twist} yields a strategy in finding the
over-rotation interval $I_f=[\rho_f, \frac12]$ of $f$: it suffices to
take any point $y$ whose trajectory is contained in $Y_f$ and compute out
its over-rotational set $I_{f, \chi}$ which, by
Theorem~\ref{t:bimo-twist}, must be a singleton $\{\rho_f\}$. Moreover, this theorem also allows
one to describe all N-bimodal over-twist patterns which is done in the next section of the present paper.

\section{$N$-BIMODAL OVER-TWIST PATTERNS}

Let us apply our results to finding the $N$-bimodal over-twist pattern.
Let $f:[0,1]\to [0,1]$ be an $N$-bimodal map for whom the notation and
agreements introduced in Definition~\ref{d:nbimo} hold. Moreover, we
will also rely upon Theorem~\ref{t:bimo-twist} and use the notation
from that theorem. Also, to emphasize the dependence on $f$, like
earlier, we will continue to use subscripts while writing $M_f, m_f,
d_1(f)$ and $d_2(f)$ to avoid any sort of  ambiguity.

First of all, we need to show that the patterns of the periodic orbits
discovered in the previous section, are over-twist patterns. What we
know is that, according to Theorem \ref{t:bimo-twist}, if $\Pi$ is such
pattern then (1) the over-rotation pair $orp(\Pi)=(p, q)$ of $\Pi$ is
coprime, and (2) if $P$ is a cycle of pattern $\Pi$ and $f$ is a
$P$-linear map, then the over-rotation interval $I_f$ of $f$ is
$[\rho(P), 1/2]$ where $\rho(P)=p/q$ is the over-rotation number of
$P$. This shows that the following theorem \cite{bb19} applies to the
above situation.

\begin{theorem}[\cite{bb19}]\label{t:endpt-ot} Let $P$ be a cycle of
	covergent pattern $\pi$ such that the $P$-linear map $f$ has the
	over-rotation interval $[\rho(P), 1/2]$ where $\rho(P)$ is the
	over-rotation number of $P$. Moreover, suppose that the over-rotation
	pair of $P$ is coprime. Then the pattern $\pi$ is over-twist.
\end{theorem}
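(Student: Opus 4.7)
The plan is to argue by contradiction. Suppose $\pi$ is not over-twist, so some pattern $\theta \neq \pi$ with $\rho(\theta) = \rho(\pi) = p/q$ is forced by $\pi$. Since the $P$-linear map $f$ exhibits $\pi$, it must also exhibit $\theta$; hence $f$ has a cycle $Q$ of pattern $\theta$ and over-rotation number $p/q$. Let $a$ denote the unique fixed point of $f$, set $\pi' = \pi \cup \{a\}$, and consider the Markov graph $G_{\pi'}$. Because $\pi$ is convergent, the $P$-linear map has no horseshoe and every periodic orbit of $f$ is uniquely coded by a loop in $G_{\pi'}$; let $L_P$ and $L_Q$ be the loops coding $P$ and $Q$. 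By the discussion in Subsection~\ref{ss:rot1}, the over-rotation number of a cycle of $f$ equals the $\psi$-average along its loop, where $\psi$ assigns $1$ to arrows crossing from the right of $a$ to the left of $a$ and $0$ elsewhere. Since the over-rotation interval of $f$ is $[p/q, 1/2]$, the number $p/q$ is the minimum $\psi$-average over all loops of $G_{\pi'}$, and both $L_P$ and $L_Q$ attain it.

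The heart of the argument is a uniqueness-of-minimum statement on $G_{\pi'}$ driven by the coprimality of $(p,q)$. First, by decomposing $L_Q$ into simple sub-loops and applying the standard weighted-average inequality, I would show that every simple sub-loop of $L_Q$ must itself have $\psi$-average exactly $p/q$; coprimality then forces each such simple loop to have length divisible by $q$, and the bounded size of $G_{\pi'}$ pins this length down to exactly $q$ with $\psi$-weight $p$. It remains to argue that the minimal simple loop of length $q$ and $\psi$-weight $p$ is unique and equal to $L_P$. Any alternative such loop would, via the reverse coding, give a cycle $R$ of the $P$-linear map with rotation pair $(p,q)$ but pattern distinct from $\pi$; the rigidity coming from a coprime rotation pair on the real line (analogous to Poincar\'e's classification of circle homeomorphisms with rational rotation number) should force the cyclic order of $R$ on the line to match that of $P$, a contradiction. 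Hence $L_Q$ is a concatenation $L_P^k$ for some $k\ge 1$, meaning $Q$ and $P$ share the same support and $\theta = \pi$, against our assumption.

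The main obstacle is making the rigidity step rigorous for an arbitrary convergent pattern, rather than just the $N$-bimodal case treated in Theorem~\ref{t:bimo-twist}. The cleanest route is to generalize the monotone lower-bound construction from that theorem: one builds a degree-one lifting $F$ of the $P$-linear map together with a continuous non-decreasing map $G \le F$ whose classical rotation number equals $p/q$ and which agrees with $F$ along any orbit realizing the minimum over-rotation number. The classical theory of non-decreasing degree-one maps of $\R$ then yields that a coprime rotation pair $(p,q)$ is realized by a unique cyclic-order type of lifted periodic orbit, which pins down the pattern of every rotation-$p/q$ cycle of $f$ to be $\pi$ and completes the proof.
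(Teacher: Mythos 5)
Your combinatorial setup is sound as far as it goes: using the Markov graph $G_{\pi'}$, decomposing $L_Q$ into simple sub-loops each of which must attain the minimal $\psi$-average $p/q$, and then using coprimality of $(p,q)$ together with the fact that $G_{\pi'}$ has exactly $q$ vertices to conclude that each simple sub-loop is Hamiltonian of $\psi$-weight $p$ — all of this is standard and correct. However, the proposal stalls at exactly the point you flag yourself: the claim that the Hamiltonian loop of $\psi$-weight $p$ is \emph{unique} and coincides with the loop coding $P$. As written this is not a proof but an appeal to an analogy with Poincar\'e's theory, and the difficulty is genuine: two distinct Hamiltonian loops in a Markov graph can certainly carry the same total edge-weight, so uniqueness cannot follow from weight-counting alone; it has to come from the specific structure of Markov graphs of $P$-linear maps, and you do not supply that structural argument.

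Your proposed repair — ``generalize the monotone lower-bound construction'' of Theorem~\ref{t:bimo-twist} — does not obviously close the gap either. That construction (and its extension in Section 5) applies to $N$-bimodal and then to \emph{well behaved} maps, i.e.\ those for which $f>a_f$ on all of $[M_f,a_f]$ and $f<a_f$ on all of $[a_f,m_f]$. The $P$-linear map of a general convergent pattern need not be well behaved (the graph may dip below $a_f$ on $[M_f,a_f]$, for instance), so there is no reason the flip-and-lift plus pour-water-from-below construction produces a continuous non-decreasing $G\le F$ in that generality. There is also a circularity worry: in this paper the lower-bound construction is an \emph{application} of Theorem~\ref{t:endpt-ot} (via Corollary~\ref{c:ot}), not a route to proving it. Finally, note that this paper does not contain a proof of the statement at all — it is imported verbatim from the reference \cite{bb19} — so what you have produced is not an alternative to a proof in the text but an incomplete reconstruction of an external argument, with the decisive uniqueness/rigidity lemma left unproved.
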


Indeed, consider an N-bimodal interval map $f$ with rational $\rho_f$.
Consider the set $Z_f$ from Theorem \ref{t:bimo-twist}. Then the set
$Z_f$ from that theorem must be a periodic orbit of over-rotation
number $\rho_f$. Moreover, by Theorem \ref{t:bimo-twist} the
over-rotation pair of $Z_f$ is coprime and the over-twist interval of
the $Z_f$-linear map is $[\rho_f, 1/2]$. By Theorem \ref{t:endpt-ot}
it follows that the pattern of $Z_f$ is an over-twist pattern.
This completes the proof of the following corollary.

\begin{cor}\label{c:ot} Let $f$ be an $N$-bimodal map such that $\rho_f$
	is rational. Let $Z_f$ be a set defined in Theorem \ref{t:bimo-twist}.
	Then $Z_f$ is a cycle of over-twist pattern.
\end{cor}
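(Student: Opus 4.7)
The plan is to deduce Corollary \ref{c:ot} directly from Theorem \ref{t:bimo-twist}(1) together with the forcing criterion Theorem \ref{t:endpt-ot}. Since $\rho_f$ is rational, case (1) of Theorem \ref{t:bimo-twist} applies and already gives us the main structural facts about $Z_f$: it is a periodic orbit, its over-rotation pair coincides with the classical rotation pair of a suitable circle rotation by $\rho_f$, and the over-rotation interval of the $Z_f$-linear map $\psi$ equals $[\rho_f, 1/2]$. Writing $\rho_f = p/q$ in lowest terms, we then read off that the over-rotation pair of $Z_f$ is exactly $(p,q)$, and so is automatically coprime.

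These facts supply two of the three hypotheses of Theorem \ref{t:endpt-ot} for the cycle $P = Z_f$ and the pattern $\pi$ of $Z_f$. The one hypothesis not literally part of the conclusion of Theorem \ref{t:bimo-twist} is that $\pi$ be convergent, and verifying this is the step I expect to require the most care. The argument I would give is by contradiction: if $\pi$ were divergent then, by the standard fact recalled in Subsection \ref{ss:combi1}, the $P$-linear map $\psi$ would contain a horseshoe, hence (by the observation of \cite{BM1} quoted in the same subsection) would exhibit periodic points of every possible over-rotation number in $(0,1/2]$. This would force the left endpoint of the over-rotation interval of $\psi$ to equal $0$, contradicting the equality $I_\psi = [\rho_f, 1/2]$ together with the observation that $\rho_f > 0$ (the very existence of the non-fixed periodic orbit $Z_f$, which has strictly positive over-rotation number, places $\rho_f$ strictly above $0$). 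Hence $\pi$ must be convergent.

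With coprimality of the over-rotation pair, the identity $I_\psi = [\rho(Z_f), 1/2]$, and convergence of $\pi$ all in hand, Theorem \ref{t:endpt-ot} applies verbatim and yields that $\pi$ is an over-twist pattern, which is the conclusion of the corollary. Everything outside the convergence check is bookkeeping of statements already proven inside Theorem \ref{t:bimo-twist}, so the proof amounts essentially to packaging those conclusions into the form demanded by Theorem \ref{t:endpt-ot}.
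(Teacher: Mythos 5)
Your proposal takes essentially the same route as the paper: read off from Theorem~\ref{t:bimo-twist}(1) that $Z_f$ is a periodic orbit with coprime over-rotation pair and that the $Z_f$-linear map has over-rotation interval $[\rho_f,1/2]$, then feed those facts into Theorem~\ref{t:endpt-ot}. The one place where you go beyond what the paper writes is the verification that the pattern $\pi$ of $Z_f$ is convergent; the paper applies Theorem~\ref{t:endpt-ot} without comment on that hypothesis. Your contradiction via horseshoe is valid, but it is heavier than necessary, and the auxiliary claim you lean on (``the very existence of the non-fixed periodic orbit $Z_f$, which has strictly positive over-rotation number, places $\rho_f$ strictly above $0$'') is phrased in a way that does not quite hold as stated --- a map can have non-fixed periodic orbits of positive over-rotation number and still have $\rho_f=0$. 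What actually pins $\rho_f>0$ down is that Theorem~\ref{t:bimo-twist} identifies $\rho_f$ \emph{as} the over-rotation number of the non-fixed orbit $Z_f$, not merely that $Z_f$ has some positive over-rotation number. A cleaner and shorter way to get convergence is to note that an $N$-bimodal map has a unique fixed point $a_f$, so $f(x)>x$ for $x<a_f$ and $f(x)<x$ for $x>a_f$; the defining condition for divergence (points $x<y$ in the cycle with $f(x)<x$ and $f(y)>y$) is therefore impossible for any cycle of such a map, and in particular $\pi$ is automatically convergent. With that replacement your proof and the paper's coincide, and your write-up has the advantage of making the convergence hypothesis visible rather than leaving it tacit.
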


We want to remark here, that without the assumption that the
over-rotation pair of $P$ is coprime Theorem \ref{t:endpt-ot} is not
true. Indeed, there exist non-coprime patterns whose over-rotation
number equals the left endpoint of the forced over-rotation interval,
and, by Theorem \ref{t:bm2} they are not over-twist patterns (by
Theorem \ref{t:bm2}, over-twist patterns must have coprime over-rotation pairs). In the
trivial cases these are patterns that have a block structure over
over-twist patterns. However there are similar patterns that do not
have block structure over over-twists. Such patterns are called
\emph{badly ordered} \cite{bb19}; they present a surprising departure
from the previously observed phenomenon according to which the results
about over-rotation numbers on the interval and those about classical
rotation numbers for circle maps of degree one are analogous.

Indeed,
take a circle map $f$ of degree one. Suppose that $f$ has a cycle $P$
of classical rotation pair $(mp, mq)$ which does not a block structure
over a rotation by $p/q$. Then by \cite{alm98} the rotation interval of
$f$ contains $p/q$ in its interior (in fact, the results of
\cite{alm98} are stronger and more quantitative but for our purposes the above quote is
sufficient).

Let us show that \emph{all} N-bimodal over-twist patterns can be described based upon
Theorem \ref{t:bimo-twist}.

\begin{lemma}\label{l:bimotwist} Let $f$ be a $P$-linear $N$-bimodal map
	where $P$ is a periodic orbit of over-twist pattern $\pi$ with over-rotation
	pair $(p, q)$. Then $P\subset Y_f$ can be viewed as the set $Z_f$
	from Theorem \ref{t:bimo-twist}.
\end{lemma}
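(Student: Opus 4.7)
The plan is to apply Theorem~\ref{t:bimo-twist} to the $P$-linear $N$-bimodal map $f$ and to show that the cycle $P$ itself can be taken as the invariant set $Z_f$ produced by that theorem. This will give both the inclusion $P\subset Y_f$ and the identification of $P$ with $Z_f$. Since $P$ realizes the over-rotation pair $(p,q)$, we have $(p,q)\in ORP(f)$, so $\rho_f\le p/q$; the main tasks are to establish $\rho_f=p/q$ and to match the pattern of $Z_f$ with $\pi$.

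I will argue $\rho_f\ge p/q$ by contradiction. Assume $\rho_f<p/q$. Because $f$ is piecewise linear, $\rho_f$ is rational, and so Theorem~\ref{t:bimo-twist}(1) presents $Z_f$ as a cycle with coprime over-rotation pair $(p_0,q_0)$, where $p_0/q_0=\rho_f<p/q$; let $\theta$ denote its pattern. The over-twist hypothesis makes $\pi$ convergent (any divergent pattern carries a horseshoe and hence forces all patterns of its own over-rotation number, contradicting over-twistedness), and a standard fact of combinatorial dynamics \cite{alm00} identifies the patterns forced by a convergent $\pi$ with those exhibited by any of its $P$-linear representatives, so $\pi$ forces $\theta$. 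Applying Theorem~\ref{t:bm2} to $\theta$ produces a pattern of over-rotation $p/q$ forced by $\theta$, and descending in the forcing order among the finitely many patterns of that over-rotation number yields an over-twist pattern $\pi^*$ at $p/q$ forced by $\theta$. Chaining, $\pi$ forces $\pi^*$; the over-twist property of $\pi$ then gives $\pi^*=\pi$, so $\theta$ forces $\pi$. Together with $\pi$ forcing $\theta$ and antisymmetry of the forcing partial order, this yields $\pi=\theta$, contradicting $\rho(\pi)\ne\rho(\theta)$. Therefore $\rho_f=p/q$.

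With $\rho_f=p/q$ in hand, Theorem~\ref{t:bimo-twist}(1) presents $Z_f$ as a cycle with coprime over-rotation pair $(p,q)$, and by Corollary~\ref{c:ot} its pattern is over-twist. The same forcing reasoning as above shows that any over-twist pattern at $p/q$ exhibited by $f$ must coincide with $\pi$, so $Z_f$ is a cycle of pattern $\pi$. In the $P$-linear map $f$ the cycle $P$ is the unique cycle of pattern $\pi$, by the rigidity of the piecewise linear structure: the return map $f^q$ on a small neighborhood of a point of $P$ is linear with slope different from $1$, so its fixed point is unique and equals that point of $P$. Hence $Z_f=P$, giving $P\subset Y_f$ and allowing $P$ to play the role of $Z_f$.

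The main obstacle is the forcing argument ruling out $\rho_f<p/q$. It rests on chaining Theorem~\ref{t:bm2}, the existence of an over-twist inside every nonempty family of patterns of a fixed over-rotation number, the identification of the forcing closure of a convergent pattern with the pattern set of its $P$-linear representative, and the antisymmetry of forcing. With these in place the over-twist hypothesis on $\pi$ extracts the contradiction cleanly.
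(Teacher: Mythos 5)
Your overall structure is genuinely different from the paper's. The paper argues by contradiction: it assumes $P\not\subset Y_f$, observes that then the periodic orbit $Z_f$ supplied by Theorem~\ref{t:bimo-twist} is a cycle of $f$ disjoint from $P$ with the same over-rotation pair $(p,q)$, and concludes that $\pi$ forces this new pattern, contradicting over-twistedness. You instead try to \emph{directly} identify $Z_f$ with $P$: first show $\rho_f=p/q$, then show $Z_f$ has pattern $\pi$, then argue that $P$ is the \emph{only} cycle of pattern $\pi$. The first two steps work, but the third does not.

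The forcing-chain argument for $\rho_f=p/q$ is correct in spirit but more roundabout than needed, and it leans on the unjustified assertion that $\rho_f$ is rational ``because $f$ is piecewise linear.'' That claim is in fact true for $P$-linear maps — $\rho_f$ is an extremal cycle average in the finite Markov graph $G_{\pi'}$ with rational weights, as in Subsection~\ref{ss:rot1} — but you should either say so or, better, avoid it: if $\rho_f<p/q\le 1/2$ then by Theorem~\ref{t:bm2} the map $f$ has a cycle with over-rotation pair $(2p,2q)$; its pattern has period $2q\ne q$, hence differs from $\pi$, yet has over-rotation number $p/q$; since $\pi$ is convergent and $f$ is $P$-linear this pattern is forced by $\pi$, contradicting over-twistedness. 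This short route needs no rationality discussion.

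The genuine gap is the final uniqueness claim. You assert that $P$ is the unique cycle of pattern $\pi$ in the $P$-linear map because ``the return map $f^q$ on a small neighborhood of a point of $P$ is linear with slope different from $1$.'' This is a purely \emph{local} statement (and is not even literally true: points of $P$ are break points of $f$, so $f^q$ need not be linear across them, only one-sidedly). Even granting local uniqueness near each point of $P$, this does not exclude a second cycle of pattern $\pi$ located entirely away from $P$ — which is exactly what $Z_f$ would be if $P\not\subset Y_f$. Global uniqueness of a cycle realizing a given irreducible pattern in its $P$-linear model is a nontrivial assertion and would need its own proof. The paper avoids having to prove it: assuming $P\not\subset Y_f$, the cycle $Z_f$ is disjoint from $P$, and the over-twist hypothesis is applied to the pattern of $Z_f$ (forced by $\pi$, same over-rotation pair) to reach the contradiction directly. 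You should replace your uniqueness claim with that contradiction argument.
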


Recall that the set $Y_f$ for a given N-bimodal map $f$ is defined in Theorem
\ref{t:bimo-twist}. The set $Y_f$ in Case 1 and Case 2 is shown in Figure
\ref{Y:Case1} and Figure \ref{Y:Case2} respectively.

\begin{figure}[H]
	\caption{\textbf{\textit{The set $Y_f$, shown in dotted line, for the N-bimodal map $f$ in Case 1 }}}
	\centering
	\includegraphics[width=0.55\textwidth]{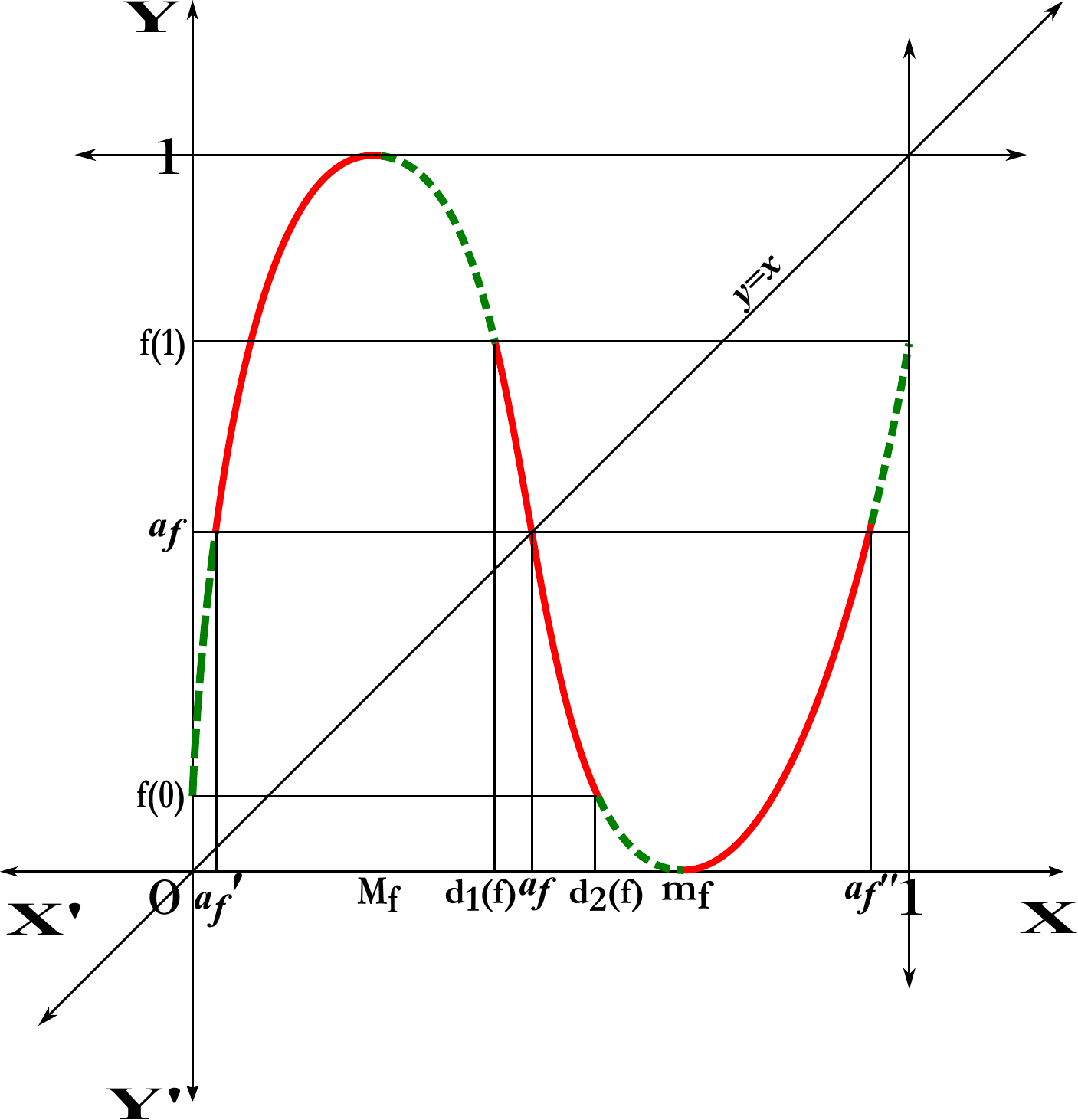}
	\label{Y:Case1}
\end{figure}

\begin{figure}[H]
	\caption{\textbf{\textit{The set $Y_f$, shown in dotted line, for the N-bimodal map $f$ in Case 2}}}
	\centering
	\includegraphics[width=0.55\textwidth]{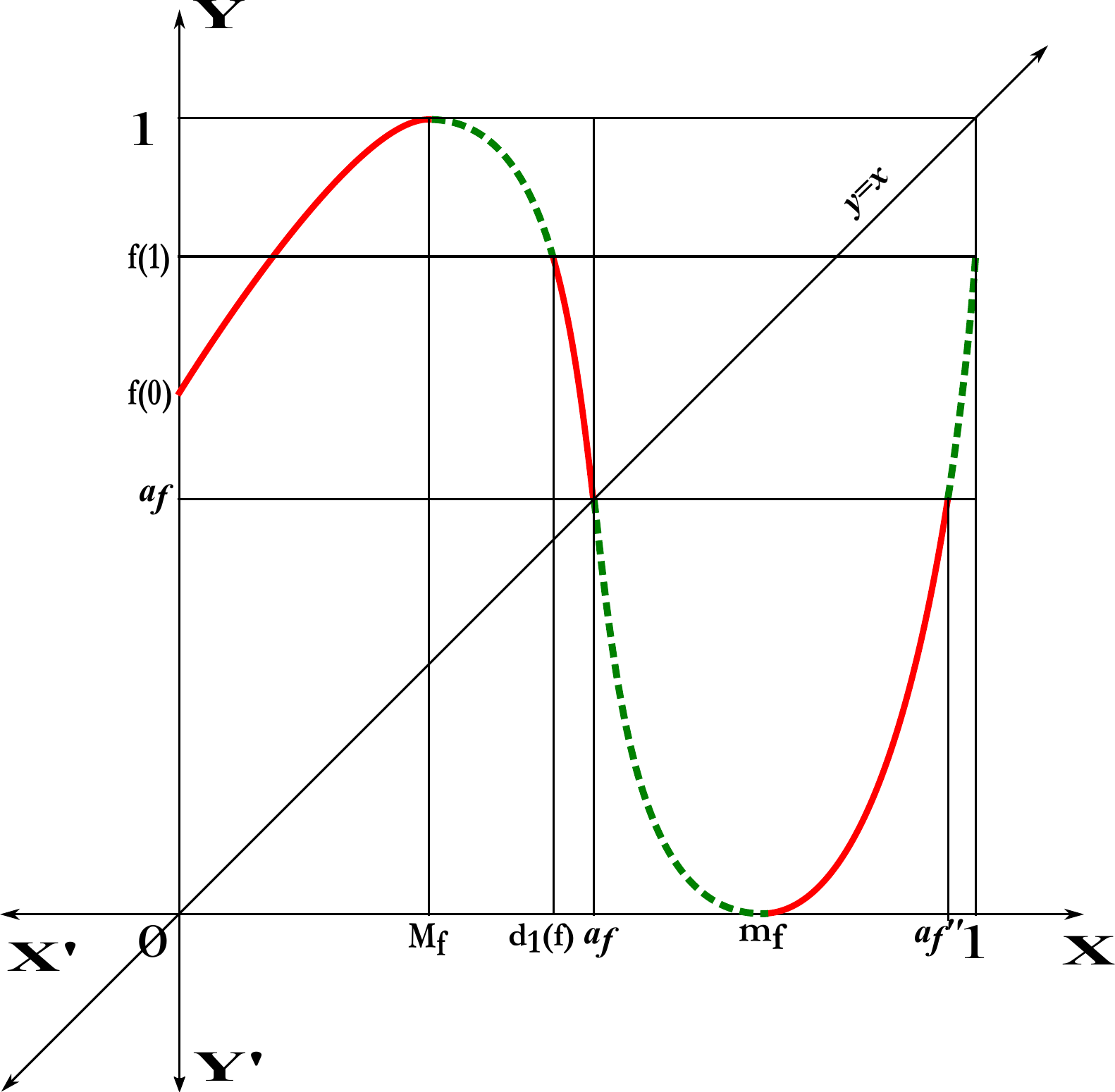}
	\label{Y:Case2}
\end{figure}

\begin{proof} By Theorem~\ref{t:bm2}, $p$ and $q$ must be coprime. Moreover,
	by Theorem~\ref{t:bm2} and by definition $I_f=[\frac{p}{q}, \frac12]$.
	Let us show that then the set $P$ is contained in $Y_f=[0, a_f']\cup
	[M_f, d_1(f)]\cup [d_2(f), m_f)]\cup [a''_f, 1]$ (if $f(0)<a_f$) or in
	$[M_f, d_1(f)]\cup [a_f, m_f]\cup [a_f'', 1]$ (if $f(0)>a_f$). It is
	sufficient to consider only the case $f(0)<a_f$ as the other one is
	similar. Suppose that the containment claimed above fails. By Theorem
	\ref{t:bimo-twist} it follows that the set $Z_f$ whose existence and
	properties are described in Theorem \ref{t:bimo-twist} is contained in
	$Y_f$ and is disjoint from $P$. Thus, $\pi$ forces a pattern $\gamma$
	of $Z_f$. However both $\pi$ and $\gamma$ have the same over-rotation
	pair $(p, q)$, which is impossible because by the assumption $\pi$ is
	an over-twist pattern.
\end{proof}

To describe all N-bimodal over-twist patterns we consider two cases.

First, assume that $f(0)\le a_f $. Set $K_1(f)=[0, a_f'],$
$K_2(f) = [M_f,$ $d_1(f)], $ $K_3(f) = [d_2(f), m_f]$, $K_4(f)=[a_f'', 1]$.
By Theorem~\ref{t:bimo-twist}, $Z_f \subset Y_f =K_1(f) \cup K_2(f)
\cup K_2(f) \cup K_3(f) \cup K_4(f)  $ which is what we will rely upon
giving an explicit description of $N$-bimodal over-twists
of over-rotation number $\frac{p}{q}$. By definition, there must be $p$
points of $P$ in the interval $K_3(f) =[d_2(f) , m_f]$ and $p$ points
of $P$ in the interval $K_2(f) = [M_f, d_1(f) ]$. Indeed, $[d_2(f),
m_f]$ is the only component of $Y_f$ to the right of $a_f$ whose points
map to the left of $a_f$ and hence contribute to the over-rotation
number. Since the number of points mapped from
the left of $a_f$ to the right of $a_f$ has to be the same, there
must be $p$ points of $P$ in the interval $K_2(f) =[M_f, d_1(f) ]$.
The remaining $q-2p$ points are contained in the intervals $K_1(f)
$ and $K_4(f)$. If there are $r$ points of $P$ in the interval $[0,
a_f']$ , then there would be $s=q-2p-r$ points in the interval
$[a_f'',1]$. This defines the number of points in the intervals
$K_1(f)$, $K_2(f),$ $K_3(f),$ and $K_4(f)$. Clearly, $r\ge 0$ and $s\ge 0$, i.e.
$0\le r\le q-2p$.

If $r=0$ or $s=q-2p-r=0$ (i.e., $r=q-2p$) , our over-twist pattern reduces to a unimodal
over-twist pattern described in \cite{BS}. First recall that the unique
unimodal over-twist pattern of over-rotation number $\frac{p}{q}$ is
denoted by $\gamma_{\frac{p}{q}}$ and its action on the $q$ points
$x_1, x_2, \dots, x_q$ of a periodic orbit $P$ which exhibits this pattern
is as follows: the first $q-2p$ points of the orbit from the left are
shifted to the right by $p$ points, the next $p$ points are flipped
(that is, the orientation is reversed, but the points which are
adjacent remains adjacent) all the way to the right. Finally, the last
$p$ points of the orbit on the right are flipped all the way to the
left. Thus $\gamma_{\frac{p}{q}}$ can be described by the permutation
$\pi_{\frac{p}{q}}$ defined as follows:

	\begin{equation}
	\pi_{\frac{p}{q}}(j)=
	\begin{cases}
	
	j+p  &   \text{if $1 \leq j \leq q-2p $}\\
	2q-2p+1-j  & \text{if $q-2p+1 \leq j \leq q-p$}\\
	q+1-j & \text{if $q-p+1\leq j \leq q$ }\\
	
	\end{cases}
	\end{equation} 		

\noindent The unimodal over-twist
pattern $\gamma_{\frac{2}{7}}$ is shown in Figure \ref{f:bimodal-1}.

\begin{figure}[H]
\caption{\textbf{\textit{The Unimodal over-twist pattern $\gamma_{\frac{2}{7}}$ }}}
\centering
\includegraphics[width=0.7\textwidth]{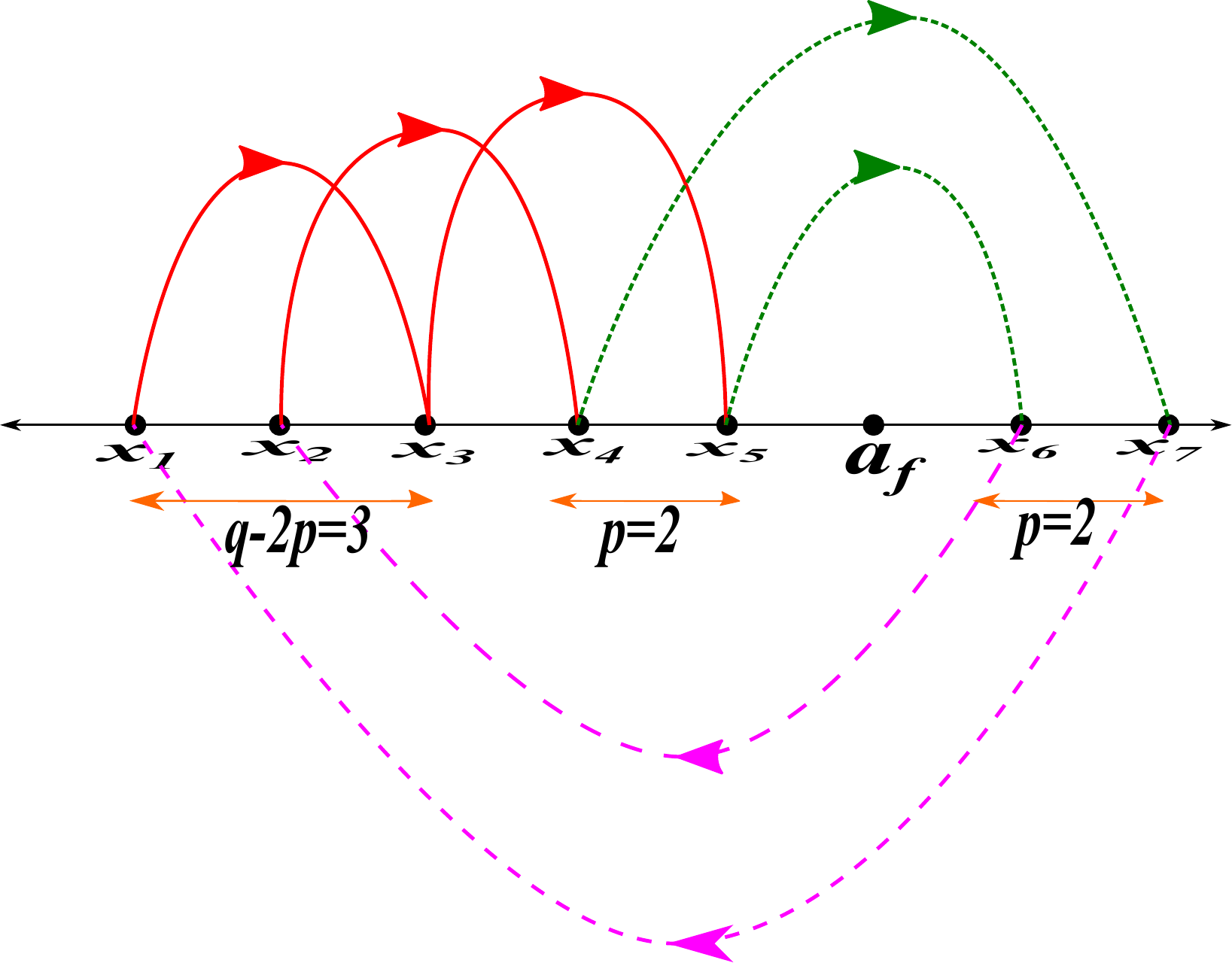}
\label{f:bimodal-1}
\end{figure}

To study over-twist patterns which are strictly bimodal, we set the
restriction $r\ge 1$ and $s\ge 1$. Then, $r \in \{1, 2, \dots,
q-2p-1\}$. Clearly, for each fixed value of $r$ from the set $\{1, 2,
\dots, q-2p-1\}$ we get a distinct bimodal over-twist pattern of
over-rotation number $\frac{p}{q}$. Thus, for the over-rotation number
$\frac{p}{q}$, there are $q-2p-1$ possible distinct bimodal
over-twist patterns each of which can be characterized by three
parameters $r, p, q$. We will denote each such patterns by
$\Gamma_{r,\frac{p}{q}}$. Let the permutation corresponding to the
over-twist pattern be denoted by $\Pi_{r, p, q}$. It follows that
$\Pi_{r, p, q}$ should be described as follows:
		
\begin{equation}
\Pi_{r,p,q}=
\begin{cases}
	
j+p  &   \text{if $1 \leq j \leq r $}\\
q-j+r+1  & \text{if $r+1 \leq j \leq r+p$}\\
2p-j+r+1 & \text{if $r+p+1 \leq j \leq r+2p$ }\\
j-p & \text {if $r+2p+1 \leq j \leq q$}\\
\end{cases}
\end{equation}

In other words, this is what the pattern $\Gamma_{r,\frac{p}{q}}$ does
with the $q$ points $x_1, x_2, \dots, x_q$ of the periodic orbit. The
first $r$ points $x_1, x_2, \dots, x_r$ from the left of the orbit are
shifted to the right by $p$ points. The next $p$ points $x_{r+1},
x_{r+2}, \dots, x_{r+p}$ map forward onto the last (the rightmost) $p$
points of the orbit with a flip (i.e., with orientation reversed) but
without any expansion so that $f(x_{r+1})) = x_q, ... f(x_{r+p}) =
x_{q-p-1}$. The images of the next $p$ points $x_{r+p+1},...x_{r+2p}$
are just the first (the leftmost) $p$ points of the orbit with a flip,
so that $f(x_{r+p+1})= x_p, \dots, f(x_{r+2p})=x_1$. Finally, the
images of the last (the rightmost) $s=q-2p-r$ points $x_{r+2p+1},
x_{r+2p+2}, \dots, x_{q}$,are exactly the points $x_{r+p+1}, \dots,
x_{q-p}$ respectively. Observe that the unimodal case
$\pi_{\frac{p}{q}}$ from \cite{BS} described above is a particular case
of $\Pi_{r,p,q}$ with $r=0$. As an example of a bimodal permutation
$\Pi_{r,p,q}$, taking $r=3$, $p=3$ and $q=11$, we get a bimodal
over-twist pattern of period 11 given by the permutation
$\Pi_{3,\frac{3}{11}}= (1,4,11,8,2,5,10,7,3,6,9)$ depicted in
Figure \ref{f:bimodal-2}.

\begin{figure}[H]
\caption{\textbf{\textit{The Bimodal over-twist pattern $\Gamma_{3,\frac{3}{11}}$ }}}
\centering
\includegraphics[width=0.7\textwidth]{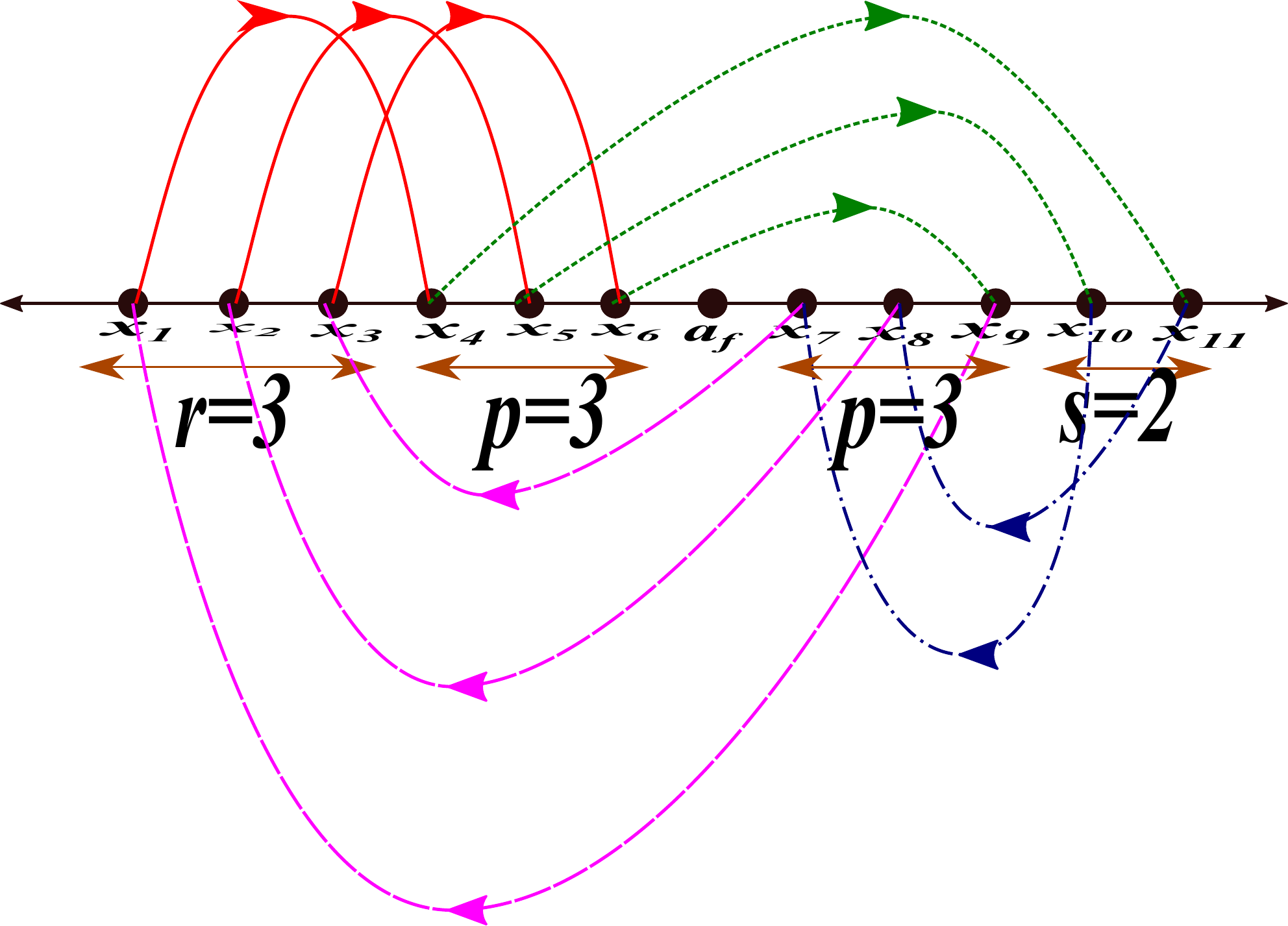}\label{f:bimodal-2}
\end{figure}

Finally, consider the case when $f(0)\ge a_f.$ Then, by
Theorem~\ref{t:bimo-twist}, the set $Z_f=P$ is a periodic orbit contained
in $Y_f=[M_f, d_1(f)]\cup [a_f, m_f]\cup [a_f'', 1]$. In such a case we see that
there must be $p$ points in each of the intervals $[M_f, d_1(f)]$ and
$[a_f,m_f]$ and $q-2p$ points in the interval $[a_f'',1]$. In such a case,
the corresponding pattern is the unimodal over-twist pattern of
over-rotation number $\frac{p}{q}$; in fact, the corresponding
permutation is the flip of the permutation $\pi_{\frac{p}{q}}$. Observe, that
according to our analysis overall there are $q-2p-1$ N-bimodal oriented over-twist patterns.


\section{WELL-BEHAVED CONTINUOUS MAPS}

In this section we extend the above results onto a wider class of
continuous interval maps which we call \emph{well behaved}. To avoid
unnecessary complications, for the sake of brevity, and to focus upon
the most interesting and broadly studied class of maps we will assume
that the maps in question are (strictly) piecewise-monotone;
however, this is not crucial and similar arguments can be applied in
the general continuous case.

\begin{definition}
Let $f:[0,1]\to [0,1]$ be a continuous map with a unique fixed point
$a_f \in (0, 1)$ (clearly, then $x<f(x)$ for any $x\in [0, a_f)$ and
$f(x)<x$ for any $x\in (a_f, 1]$; in particular, $\min_{x\in [0, a_f]}
f(x)>0$ and $\max_{x\in [a_f , 1]}f(x)<1$). Without loss of generality we
may assume that $\min_{x\in[0, 1]} f(x)=0$ and $\max_{x\in [0,
1]}f(x)=1$. Let $M_f=\max\{x: f(x)=1\}$ and $m_f=\min\{x: f(x)=0\}$
(evidently, $0<M_f<a_f$ and $a_f<m_f<1$). If for all $x\in [M_f, a_f], f(x)>a_f$ and
for all $x\in [a_f, m_f], f(x)<a_f$ we will call $f$ \emph{well behaved}. Let
$\mathcal{W}$ be the family of all well behaved maps.
\end{definition}

Figure \ref{f:wellb-1} gives an example of a well behaved map.

\begin{figure}[H]
\caption{\textbf{\textit{A well-behaved continous map $f$}}}
\centering
\includegraphics[width=0.85\textwidth]{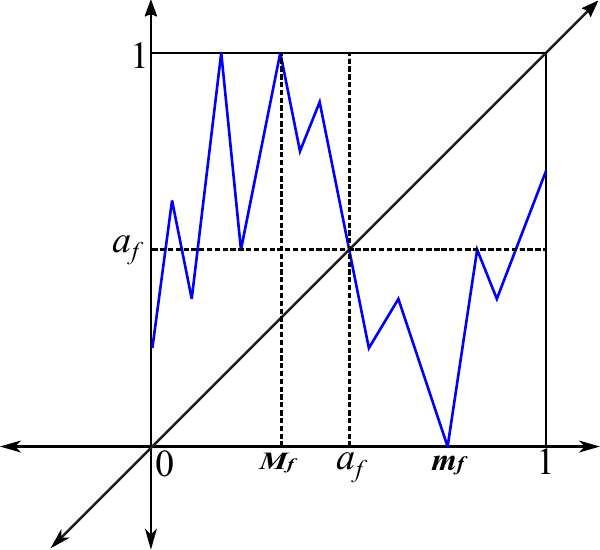}
\label{f:wellb-1}
\end{figure}

In the preceding sections of the paper we
constructed, for a given $N$-bimodal map $f$, the canonical
discontinuous lifting of $f$. It turns out that this construction can
be extended onto well behaved maps. Indeed, let $f\in \mathcal{W}$.
Like in the bimodal case, let us consider the discontinuous conjugacy
$\sigma_f:[0,1]\to [0,1]$ defined by

\begin{equation}
\sigma_f(x)=
\begin{cases}

x & \text{if $0\le x\le a_f$}\\
a_f+1-x  &   \text{if $a_f\le x\le 1$}\\
\end{cases}
\end{equation}

\noindent which conjugates $f $ to a map $g_f:[0,1]\to [0,1]$ so that $g_f=
\sigma_f\circ g_f\circ \sigma_f^{-1}$. As before, we ignore the fact that our
maps are going to be multivalued at $a_f$ and its preimages as it does
not impact the over-rotation interval of the map that depends only upon
the over-rotation numbers of periodic \emph{non-fixed} points.
As before, by $\si_f'$ we mean the map $\si_f$ restricted upon
$[a_f, 1]$; moreover, if we flip points of the plane in the vertical
direction with respect to the line $y=\frac{1+a_f}{2}$ we shall say that
we apply \emph{vertical $\si_f'$}, and if we flip points of the plane in
the horizontal direction with respect to the line $x=\frac{1+a_f}{2}$ we
shall say that we apply \emph{horizontal $\si_f'$}.

Let the preimages of $a_f$ in the interval $ [0, a_f]$ be denoted
sequentially by $a_f^{i}, i=1, 2, \dots, k$ for some $k \in \mathbb{N}$
such that $a_f^{1}<a_f^{2}<\dots <a_f^{k}=a_f$. On each interval
$(a_f^{i},a_f^{i+1})$ either $f(x)>a_f$ or  $f(x)<a_f$, and by our assumption $f(x)>a_f$ on $(a_f^{k-1}, a_f)$.

Each interval $[a_f^{i}, a_f^{i+1}], i=1, 2, \dots, k-1$ is of one
of two types. If $f(x)\le a_f$ on $[a_f^{i}, a_f^{i+1}]$, then $g_f(x)=f(x)$
and the graph of $g_f$ is same as the graph of $f$; if $f(x) \ge a_f$
on $[a_f^{i},a_f^{i+1}]$, then $g_f(x)=a_f+1-f(x)$ and the graph of $g_f$
can be obtained from the graph of $f$ by applying the vertical $\si_f'$. In this way, we can construct the graph of the map $g_f$ in the interval $[0, a_f]$ (see Figure \ref{f:wellb-2}).

\begin{figure}[H]
	\caption{\textbf{\textit{Construction of the map $g_f$ for the well behaved continuous map $f$ }}}
	\centering
	\includegraphics[width=0.9\textwidth]{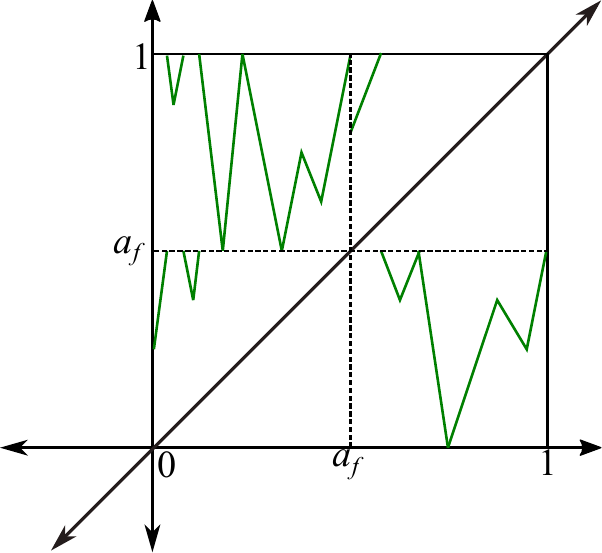}
\label{f:wellb-2}
\end{figure}

Now we construct the graph of the map $g_f$ on the interval $[a_f, 1]$.

\begin{enumerate}

\item Apply the horizontal $\si_f'$ to the entire graph of $f$
    on the interval $[a_f,1]$. Define $h_f:[a_f,1]\to [0,1]$ by $h_f(x)=
    f(a_f+1-x).$

\item Let the preimages of $a_f$ under the function $h_f$ in the
    interval $[a_f, 1)$ be denoted sequentially by
    $b_f^{1},b_f^{2},b_f^{3},....b_f^{l} $ where $b_f^{1}=a$
    $b_f^{i}<b_f^{i+1} \forall i$. In each of the intervals,
    $[b_f^{i}, b_f^{i+1}]$, the function $h_f(x)-a_f$ will have the
    same sign. By our assumption in the interval
    $(a_f, b_f^2)=(b_f^1, b_f^2),$ $h_f(x)<a_f.$

\item If for some $i\in \{1,2...k \}$ in the interval
    $[b_f^{i},b_f^{i+1}]$ we have $h_f(x)\le a_f$, then in that
    interval,  $g_f(x)=h_f(x)=f(a_f+1-x)$. On the other hand, if in the
    interval $[b_f^{i},b_f^{i+1}]$ we have $h_f(x)\ge a_f$, then in
    that interval $g(x)=a+1-h(x)=a+1-f(a_f+1-x),$ that is, in that
    case the graph of $g_f$ can be obtained by applying the vertical
    $\si_f'$ to the graph of $h_f$.

\end{enumerate}

The graph of $g_f$ thus constructed will be discontinuous but will have
the same over-rotation interval as the map $f$, i.e. $I_{g_f}=I_f$. We now
define a lifting $F_f$ of degree one of the function $f$:
\begin{equation}
F_f(x)=
\begin{cases}
g_f(x)+1 & \text{if $x\in [a_f, 1]$ and $g_f(x)<a_f$}\\
g_f(x)  &   \text{ otherwise }
\end{cases}
\nonumber
\end{equation}
and then as usual if $x = k +y $ with $y \in [0,1)$, then $F_f(x) = k+
F_f(y)$. The map $F_f$ so constructed will be a degree one map of the real
line to itself, that is, an \emph{old map} (we borrow our terminology
here from \cite{mis82}). Obviously, by the construction the sets of
classical rotation numbers and pairs of $F_f$ coincide with the sets of
over-rotation numbers and pairs of $g$ and hence with the sets of
over-rotation numbers and pairs of $f$. So, the classical rotation set
$I_{F_f}$ of the function $F_f$ coincides with the over-rotation interval
$I_f$ of the function $f$.

Observe that by construction all discontinuities of $F_f$ are at points
that map to $a_f$ and its integer shifts. Since the behavior of the map
at these points is irrelevant to our studies that concentrate upon
figuring out the left endpoint of the over-rotation interval as well as
the dynamics of over-twist patterns of over-rotation number not equal
to $1/2$, we see that a lot of arguments that apply in the continuous
case apply to our functions too. Notice also, that by construction
$F_f([0, 1])\subset [0, 2]$.

Next we construct the \emph{lower bound} function $G_f$ similar to
the corresponding function constructed previously for $N$-bimodal maps.
However here we follow the classic approach from
\cite{alm00}. The definition of the \emph{lower bound function}
$G_f$ is as follows: $G_f=\inf\{F_f(y): y\ge x\}$. Heuristically, one can get
the graph of $G_f$ from the graph of $F_f$ in the following manner: take
the graph of $F_f$ and start to pour water onto it \emph{from below} so
long that it starts to pour out over the ``edges''. Then, the bottom
level of water thus formed will give us the graph of the function $G_f$.
This function is clearly non-decreasing (in fact, if the original
function $F_f$ is non-decreasing then $G_f=F_f$). We want to discover
conditions on $F_f$ that would imply that $G_f$ is continuous because this
would in turn imply the existence of a point $x$ whose $G_f$-orbit of $G_f$
avoids ``flat spots'' of $G_f$ and, therefore, coincides with the
$F_f$-orbit of $x$. This would imply that $x$ has the lowest classic
rotation number in the sense of $F_F$, and that the corresponding point
$x'=\si_f(x)$ has the least possible over-rotation number in the sense of
$f$.

\begin{figure}[H]
\caption{\textbf{\textit{Construction of the maps $F_f$ and $G_f$ for the well behaved continuous map $f$  }}}
\centering
\includegraphics[width=1.2\textwidth]{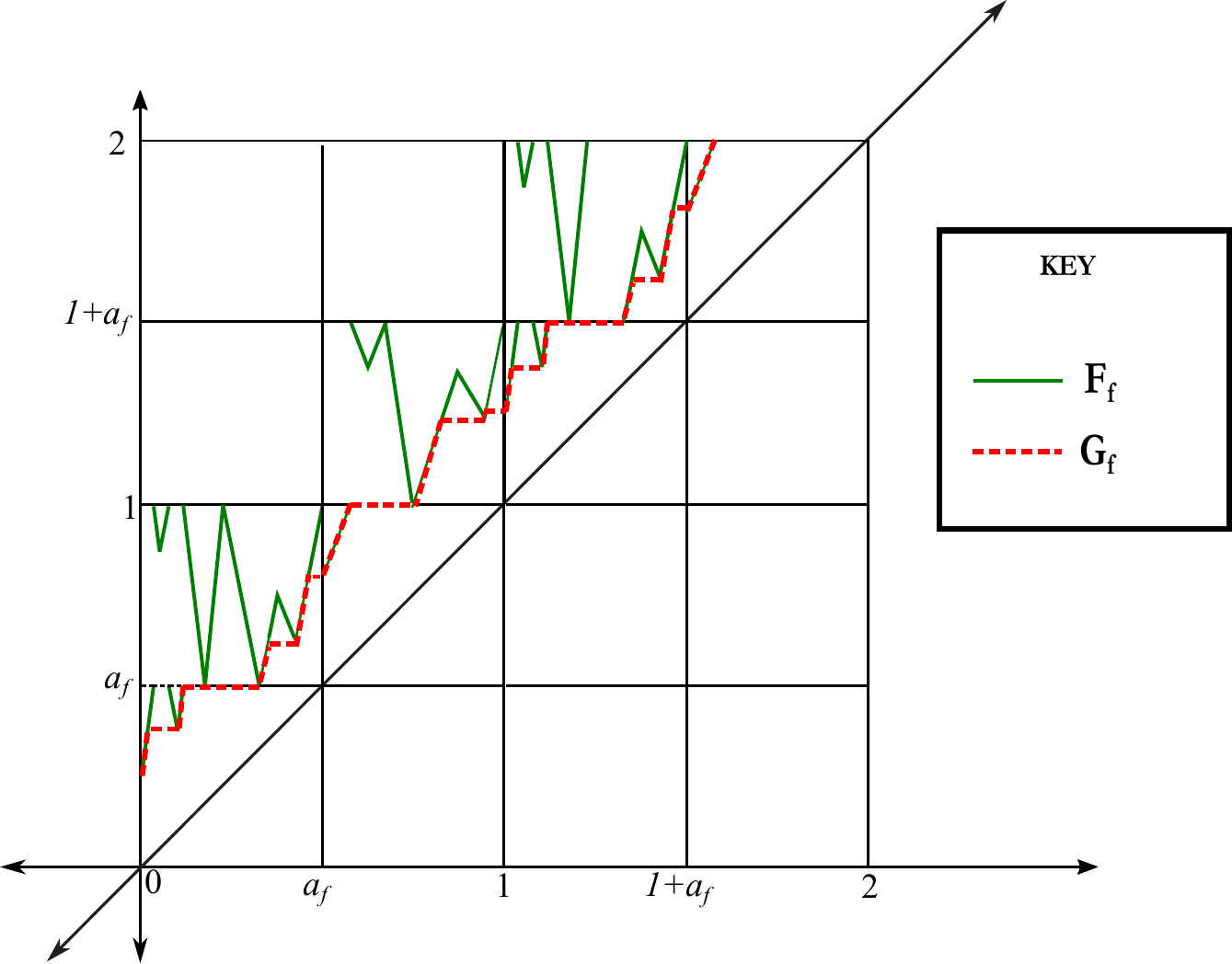}
\label{GF:well}
\end{figure}

\begin{lemma}\label{l:jumps} The range of any non-decreasing
function $\psi: \R\to \R$ is closed; it coincides with $\R$ if and only
if $\psi$ is continuous. The number of points of discontinuity of
$\psi$ is at most countable.
\end{lemma}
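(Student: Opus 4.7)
The plan relies on the elementary fact that a non-decreasing function on $\R$ admits one-sided limits at every point. For $\psi$ non-decreasing, set $\psi(x^-)=\sup_{y<x}\psi(y)$ and $\psi(x^+)=\inf_{y>x}\psi(y)$; both exist in $\R\cup\{\pm\infty\}$ and satisfy $\psi(x^-)\le\psi(x)\le\psi(x^+)$, with continuity at $x$ equivalent to the three quantities coinciding. Every point of discontinuity $x$ therefore produces a nonempty open \emph{jump interval} $J_x=(\psi(x^-),\psi(x^+))$, and this observation drives the rest of the argument.

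For the countability claim, I would note that the intervals $J_x$ are pairwise disjoint: if $x<y$ are both discontinuities, then picking any $t$ with $x<t<y$ gives $\psi(x^+)\le\psi(t)\le\psi(y^-)$, so $J_x$ lies strictly to the left of $J_y$. Selecting one rational from each $J_x$ yields an injection from the set of discontinuities into $\mathbb{Q}$, hence that set is at most countable.

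For the closedness of the range, I would show that the complement of $\psi(\R)$ is a union of open intervals. Take $y\notin\psi(\R)$ and set $x_0=\sup\{t:\psi(t)<y\}$; the pathological cases $x_0=\pm\infty$ correspond to $y$ lying outside the closed convex hull of $\psi(\R)$, and in the paper's intended use $\psi$ is of degree one (hence unbounded above and below) so those cases are vacuous. By monotonicity $\psi(t)<y$ for $t<x_0$ and $\psi(t)>y$ for $t>x_0$, so $\psi(x_0^-)\le y\le\psi(x_0^+)$; since $y\ne\psi(x_0)$, the point $y$ lies in one of the two open sub-intervals $(\psi(x_0^-),\psi(x_0))$ or $(\psi(x_0),\psi(x_0^+))$ of the jump at $x_0$, and neither sub-interval meets $\psi(\R)$. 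Therefore $\psi(\R)$ is closed. The characterization by continuity then drops out: if $\psi$ is continuous no jumps exist, so no gaps occur in the range, and combined with unboundedness this gives $\psi(\R)=\R$; conversely, any discontinuity opens a gap that obstructs $\psi(\R)=\R$.

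The point requiring the most care is the boundary behavior at $\pm\infty$ --- whether $\sup\psi$ and $\inf\psi$ are finite, and whether they are attained. This is precisely where the implicit degree-one standing assumption on $\psi$ (i.e., $\psi(x+1)=\psi(x)+1$, making $\psi$ unbounded in both directions) does the work, letting the general monotone-function argument go through without having to track end pieces.
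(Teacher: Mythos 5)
Your proof follows the same route as the paper's: jump intervals at discontinuities are pairwise disjoint, giving countability via rationals, and the range misses exactly the union of (pieces of) those jumps, giving closedness. The one thing worth emphasizing is that you correctly identify something the paper elides: as literally stated (``the range of any non-decreasing function $\psi:\R\to\R$ is closed; it coincides with $\R$ if and only if $\psi$ is continuous''), the lemma is false --- $\psi=\arctan$ is non-decreasing and continuous with range the open interval $(-\pi/2,\pi/2)$. The paper's proof asserts ``the complement to the range is always the union of those gaps,'' which ignores the two tails $(-\infty,\inf\psi]$ and $[\sup\psi,\infty)$ when $\psi$ is bounded on one or both sides. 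You are right that the statement silently presupposes $\psi$ is unbounded in both directions, and that this is supplied in the only intended application, where $\psi=G_f$ is a continuous degree-one lift. Your treatment of the $x_0=\pm\infty$ cases makes this dependence explicit, which is a genuine improvement in rigor over the paper's terse argument; nothing else in your write-up departs from the intended proof.
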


\begin{proof}
Clearly, $\lim_{x\to a^-} \psi(x)<\lim_{x\to a^+} \psi(x)$ for a point $a$
of discontinuity of $\psi$; depending upon the value of
$\psi$ at $a$ either one or two open intervals are going to be missing
from the range of $\psi$. Since $\psi$ is non-decreasing, such gaps in
the range are disjoint for distinct points of discontinuity of $\psi$.
Hence there are at most countably many points of discontinuity of
$\psi$. The complement to the range is always the union of those gaps,
hence the range is closed. The rest of the lemma is just as simple and
is left to the reader.
\end{proof}

We now introduce a new class of maps $\mathcal{I}$ from $\R$ to $\R$.

\begin{definition}\label{d:event-incr}
A function $T:\R\to \R$ is called \textit{eventually-increasing} if
there exists a dense set $D_T\subset \R$ such that for any $z\in D_T\,$
$\exists y\in \R $ with $T(y)=z $ and $T(x)>z $ $\forall x>y$.
\end{definition}

Thus, a map $T:\R\to \R$ is \textit{eventually-increasing} if
any horizontal line (level) from a dense family will intersect the
graph of $T $ so that there will exist a point of intersection after
which (i.e., to the right of which) the graph of $T$ will be
strictly above that horizontal line.

\begin{lemma}\label{l:eventually-increasing}
Let $T:\R \to \R $ be eventually-increasing. Then, the lower bound
function $S_T:\R\to \R$ defined by $S_T(x)=\inf\{T(y):y\ge x\}$ is
continuous.
\end{lemma}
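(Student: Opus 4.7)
The plan is to invoke Lemma \ref{l:jumps} twice: first to conclude that the range of $S_T$ is closed, and then to conclude from surjectivity of $S_T$ that $S_T$ is continuous. To make this strategy work I only need to exhibit a dense set of points lying in the range of $S_T$, and the eventually-increasing hypothesis hands me exactly such a set.

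First I would check that $S_T$ is non-decreasing: if $x_1 \leq x_2$ then $\{y : y \geq x_1\} \supset \{y : y \geq x_2\}$, so taking infima of $T$ over these sets gives $S_T(x_1) \leq S_T(x_2)$. Hence Lemma \ref{l:jumps} applies to $S_T$, telling me that the range of $S_T$ is a closed subset of $\mathbb{R}$ and that $S_T$ is continuous if and only if this range equals all of $\mathbb{R}$.

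Next I would verify the key containment $D_T \subset \mathrm{range}(S_T)$. Fix $z \in D_T$ and, using Definition \ref{d:event-incr}, choose $y \in \mathbb{R}$ with $T(y) = z$ and $T(x) > z$ for every $x > y$. Then in the infimum $S_T(y) = \inf\{T(w) : w \geq y\}$ the candidate $w = y$ contributes the value $z$, while every other $w > y$ contributes something strictly larger than $z$. So the infimum is attained at $w = y$ and equals $z$, meaning $S_T(y) = z$ and $z$ lies in the range of $S_T$.

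Finally, since $\mathrm{range}(S_T)$ is a closed subset of $\mathbb{R}$ that contains the dense set $D_T$, it must equal all of $\mathbb{R}$; Lemma \ref{l:jumps} then forces $S_T$ to be continuous. There is essentially no obstacle, and the only subtlety worth double-checking is that Definition \ref{d:event-incr} really provides a point $y$ with $T(y) = z$ (not merely $T(y) \leq z$ or $T(y)$ approaching $z$ from above); this is precisely what guarantees that the infimum defining $S_T(y)$ is attained rather than only approached, which is the single fact driving the entire argument.
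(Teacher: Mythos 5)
Your proof is correct and follows exactly the same route as the paper's: use the eventually-increasing hypothesis to show $S_T(y)=z$ for each $z\in D_T$ (with $y$ as in Definition~\ref{d:event-incr}), then invoke Lemma~\ref{l:jumps}. You have simply filled in the details that the paper's terse proof leaves implicit (monotonicity of $S_T$, closedness of the range, density forcing surjectivity).
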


\begin{proof}
Given $z \in D_T$ choose $y$ such that $T(y)=z$ and $T(x)>z$ for all
$x>y$. Then by definition it follows that $S_T(y)=z$. Since $D_T$ is
dense, Lemma~\ref{l:jumps} implies the claim.
\end{proof}

We will now prove the main result of this section of the paper. In
proving it we will keep intact all the notation and agreements introduced in
the paper so far. However we also need to introduce some new
concepts and notation. Suppose that $f\in \mathcal{W}$. Then $a_f$ is a
unique fixed point of $f$. We construct now a special branch of the
inverse function of $f$, i.e. a function $h_f:[0, 1]\to [0, 1]$ such
that $f\circ h_f(x)=x$ (observe that by definition $f$ is onto). The
function $h_f$, called the \emph{canonical inverse (of $f$)} is
constructed as follows. Suppose that $\al_f=\min\{f(x): x\le a_f\}$ and
$\be_f=\max\{f(x):x\ge a_f\}$. If the entire segment $[0, a_f]$ maps to
the right of $a_f$, then $\al_f=a_f$; similarly, if the segment $[a_f,
1]$ maps to the left of $a_f$, then $\be_f=a_f$. Since the case when
segments $[0, a_f]$ and $[a_f, 1]$ are flipped to the other side of
$a_f$ is trivial, we assume that at least one of them is not flipped to
the other side of $a_f$. Thus, at least one of the numbers $\al_f,
\be_f$ is not equal to $a_f$. For the sake of definiteness from now on
we assume that $\al_f<a_f$.

Now, let $z\in [\al_f, a_f]$. Then we define $h_f(z)$ as the greatest
number $y\in [0, a_f]$ such that $f(y)=z$ (in particular, $h_f(a_f)=a_f$).
Similarly, if $z\in [a_f, \be_f]$, then $h_f(z)$ is the least number $y$
such that $f(y)=z$. If now $z\notin [\al_f, \be_f]$ then we define
$h_f(z)$ as the closest to $a_f$ number $y$ such that $f(y)=z$. This
completely defines the function $h_f$. A useful exercise for the reader
here is to consider $N$-bimodal functions and describe their canonical
inverses.

We can directly describe the set $\overline{h_f([0, 1])}$ as follows. Set

\begin{multline*}
L_1(f) = \{x\in [0, a_f]: f(x)\in [\al_f, a_f]\cup (\be_f, 1], \\
 x =\sup\{y\in [0, a_f]|f(y)=f(x) \} \}
\end{multline*}

\noindent

\begin{multline*}
L_2(f)=\{x\in [a_f, 1]: f(x)\in [a_f, \be_f]\cup [0, \al_f), \\
 x=\inf\{y\in [a_f, 1]|f(y)=f(x) \} \};
\end{multline*}

\noindent then it is easy to see that $\overline{h_f([0, 1])}=L_1(f)\cup
L_2(f)=Y_f.$

Using the introduced notation we now prove our main result.

\begin{theorem}\label{t:well-behaved}
Let $f:[0, 1]\to [0, 1], f\in \mathcal{W}$ be a well behaved map. Then
the lower bound function $G_f:\R\to \R $ is continuous and increasing.
Moreover, the classical rotation number of the map $G_f$ equals the
left endpoint $\rho_f$ of the over-rotation interval $[\rho_f,
\frac12]$ of the map $f$. There exists a minimal $f$-invariant set
$Z_f$ such that for every point $y\in Z_f$ we have $I_{f,
\chi}(y)=\rho_f$, and there are two possibilities:

\begin{enumerate}

\item $\rho_f$ is rational, $Z_f$ is a periodic orbit, and
    $f|_{Z_f}$ is canonically conjugate to the rotation by $\rho_f$
    restricted on one of its cycles;

\item $\rho_f$ is irrational, $Z_f$ is a Cantor set, and $f|_{Z_f}$
    is canonically semi-conjugate by a map which is at most
    two-to-one to the circle rotation by $\rho_f$.

\end{enumerate}

\noindent Furthermore, $Z_f\subset Y_f$ and for every point $y$ whose trajectory is contained
in $Y_f$ we have  $I_{f, \chi}(y)=\{\rho\}$.
\end{theorem}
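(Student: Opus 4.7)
The plan is to follow the strategy used for Theorem~\ref{t:bimo-twist} and adapt it using the two technical lemmas (Lemma~\ref{l:jumps} and Lemma~\ref{l:eventually-increasing}) just proved. The construction of $F_f$ and $G_f$ was set up precisely so that this should go through, but now without the explicit piecewise formulas available in the bimodal case.

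First I would verify that $F_f$ is eventually-increasing in the sense of Definition~\ref{d:event-incr}. The well-behaved assumption guarantees that on $[M_f, a_f]$ the map $f$ takes values strictly above $a_f$, and on $[a_f, m_f]$ strictly below $a_f$; after composing with $\sigma_f$ and then adding $1$ on the right side to form $F_f$, this translates to the statement that $F_f$ has ``upward jumps'' at preimages of $a_f$ and that on every maximal monotonicity piece of the canonical inverse $h_f$ the graph of $F_f$ sweeps through the corresponding interval of levels. Taking those levels $z$ that are hit in the relative interior of such a piece by the \emph{last} $y$ before an upward jump yields a dense set $D_{F_f}\subset\R$ as required. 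Lemma~\ref{l:eventually-increasing} then gives the continuity of $G_f$, and the monotonicity is immediate from the definition $G_f(x)=\inf\{F_f(y):y\ge x\}$.

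Next I would relate the rotation number $\rho'_f$ of $G_f$ (existing and $x$-independent by \cite{rt86}) to the left endpoint $\rho_f$ of the over-rotation interval. By construction the classical rotation pairs of periodic points of $F_f$ equal the over-rotation pairs of the corresponding $f$-periodic points, so $I_{F_f}=I_f$. The inequality $\rho'_f\le \rho_f$ follows from $G_f\le F_f$ together with the monotonicity of $G_f$ by the exact induction used in Theorem~\ref{t:bimo-twist}, giving $G_f^n\le F_f^n$ pointwise. For the opposite inequality $\rho_f\le \rho'_f$: since $G_f$ is a continuous non-decreasing degree-one map, it descends to a monotone circle map $\tau_f$, and a standard selection (as in \cite{alm00}) produces a point $z$ whose $G_f$-orbit misses every flat spot of $G_f$; on such an orbit $G_f=F_f$, so $\rho'_f$ is realized as an over-rotation number of $f$ through $\sigma_f$, and by Theorem~\ref{t:b1} this number lies in $[\rho_f,\tfrac12]$.

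To produce $Z_f$ I would pick a minimal $\tau_f$-invariant set $A_f\subset \uc$ on which the rotation number $\rho_f$ is realized and that avoids the (at most countable, by Lemma~\ref{l:jumps}) family of closures of flat spots of $\tau_f$. In the rational case $A_f$ is a periodic orbit, and in the irrational case $A_f$ is a Cantor set, semiconjugate (at most two-to-one) to the irrational rotation by collapsing closures of flat spots. Pulling $A_f$ back through the correspondence induced by $\sigma_f$ between $g_f$-orbits and $f$-orbits yields the desired $Z_f$. The containment $Z_f\subset Y_f$ is then read off from the identification $Y_f=\overline{h_f([0,1])}$: complementary points in $[0,1]$ correspond after $\sigma_f$ to points where $F_f$ sits strictly above $G_f$, i.e., to the flat-spot region we excluded. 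Finally, for any $y$ whose $f$-trajectory stays in $Y_f$, the ergodic averages of $\chi$ along the $f$-orbit coincide with Birkhoff averages for $G_f$ along the corresponding $G_f$-orbit (which never enters a flat spot), and all such averages equal $\rho_f$ by strict ergodicity of $\tau_f|_{A_f}$, giving $I_{f,\chi}(y)=\{\rho_f\}$.

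The main obstacle is the eventually-increasing property of $F_f$: this is the technical heart of the argument, because $F_f$ is discontinuous and one must verify, in the absence of the explicit bimodal formulas, that the well-behaved hypothesis together with the vertical flips used in defining $g_f$ produces enough upward jumps along a dense set of levels. Once this is established, the rest of the proof parallels the bimodal case of Theorem~\ref{t:bimo-twist} and the conclusions about $Z_f$, its (semi)conjugacy to a rotation, and the containment $Z_f\subset Y_f$ follow by the same scheme.
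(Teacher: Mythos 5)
Your proposal follows essentially the same route as the paper: establish that $F_f$ is eventually-increasing (using the well-behaved hypothesis that $f>a_f$ on $[M_f,a_f]$ and $f<a_f$ on $[a_f,m_f]$), invoke Lemma~\ref{l:eventually-increasing} to conclude $G_f$ is continuous, then transplant the rest of the argument from Theorem~\ref{t:bimo-twist} verbatim — the induction $G_f^n\le F_f^n$ for one inequality, the flat-spot-avoiding orbit and Theorem~\ref{t:b1} for the other, and the pull-back of $A_f$ through $\sigma_f$ to get $Z_f\subset Y_f$. The only place where the paper is more explicit than you are is the verification that $F_f$ is eventually-increasing: instead of the somewhat heuristic ``upward jumps / sweep through levels'' picture, the paper fixes a level $\lambda$ (splitting into $0<\lambda<a_f$ and $a_f<\lambda<1$, and within the first case splitting on whether $\lambda\ge\alpha_f$), identifies the greatest preimage $y$ of $\lambda$ in a fundamental domain using continuity of $f$ away from preimages of $a_f$, and shows via the intermediate value theorem that any $t>y$ must satisfy $F_f(t)>\lambda$; the excluded levels (integer shifts of $a_f$ and the integers) form a countable set, so the surviving levels are dense. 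You correctly flag this as the technical heart; filling it out along the paper's lines would make the proposal complete.
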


\begin{proof}
We first show that $F_f$ is \textit{eventually increasing}. Take a level
$y=\lambda$ where $0<\lambda<a_f$. By construction of $F_f$, all
$F_f$-preimages of $\lambda$ (associate to all points of intersection of
the line $y=\lambda$ with the graph of $F_f$) lie strictly to the left of
$a_f$. By construction, they are contained in $[a_f-1, a_f]$, and there are
two cases: (1) if $\al_f \le \lambda <a_f$ then some points like that belong
to $[0, a_f)$, and (2) if $0<\lambda<\al_f$ then all points like that
belong to $(a_f-1, 0)$. In either case though the continuity of $f$ (and
therefore the continuity of $F_f$ outside the set of preimages of $a_f$ and
its integer shifts) implies that there is the greatest point $y$ with
$F_f(y)=\lambda$. Now, take $t>y$. If $t\ge a_f$ then by construction
$F_f(t)\ge a_f >\lambda$. If $y<t<a_f $ then $F_f(t)$ cannot be less than
$\lambda$ as by construction at the right endpoint of the interval of
continuity of $F_f$ containing $t$ the function $F_f$ must reach out to
$a_f>\lambda$, hence by the Intermediate Value Theorem there must exist
preimages of $\lambda$ to the right of $y$, a contradiction.

The level $\lambda$ with $a_f<\lambda<1$ is considered similarly; the
difference with the previous case is only that now we have to rely upon
the fact that on any interval of continuity of $F_f$ between $a_f$ and $1$
the function $F_f$ has to reach out to the level $1$. This shows that the
the necessary conditions for $F_f$ to be eventually increasing are
satisfied for all values except for countable families oTf integer
shifts of $a_f$ and integers themselves. Hence $F_f$ is eventually
increasing and $G_f\le F_f$ is continuous.The graph of $F_f$ and $G_f$ for the \emph{well-behaved map} $f$ is shown in Figure \ref{GF:well}.

The remaining arguments literally repeat the arguments in the last part
of the proof of Theorem~\ref{t:bimo-twist} and are left to the reader.
\end{proof}

Call a pattern $\pi$ \emph{well behaved} if any cycle $P$ of pattern
$\pi$ gives rise to a well behaved $P$-linear map $f_P=f$.
Theorem~\ref{t:well-behaved}, together with the arguments used in the
proof of Corollary \ref{c:ot} and Lemma \ref{l:bimotwist},
implies the description of well-behaved over-twist patterns. Recall
that since $f$ is well behaved, for it there are several canonically defined
sets, such as the set $L_1(f)$, the set $L_2(f)$, and their union $Y_f$.

\begin{cor}
Let $P$ be a cycle of well behaved pattern $\pi$ and let $f_P$ be a
$\pi$-linear map. Then $\pi$ is an over-twist pattern if and only if
$P\subset Y_{f_P}$.
\end{cor}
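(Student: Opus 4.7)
The plan is to establish each direction of the equivalence separately, combining Theorem~\ref{t:well-behaved} with Theorem~\ref{t:endpt-ot}, and closely following the reasoning of Lemma~\ref{l:bimotwist} and Corollary~\ref{c:ot}.

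For the forward direction, assume that $\pi$ is over-twist, with over-rotation pair $(p,q)$ coprime by Theorem~\ref{t:bm2}. I would first show that $I_{f_P}=[p/q,\tfrac12]$. One inclusion is free because $(p,q)\in ORP(f_P)$ forces $\rho_{f_P}\le p/q$. For the reverse, note that if $\rho_{f_P}<p/q$, then by Theorem~\ref{t:ovr} the set $ORP(f_P)$ takes the form $Ovr(\eta)$ and hence contains the pair $(2p,2q)$; since $f_P$ is $P$-linear, the pattern with this over-rotation pair is forced by $\pi$, and it shares the over-rotation number $p/q$ with $\pi$ while having period $2q\ne q$, violating the over-twist assumption. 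Hence $\rho_{f_P}=p/q$, and Theorem~\ref{t:well-behaved}(1) furnishes a cycle $Z_{f_P}\subset Y_{f_P}$ with coprime over-rotation pair $(p,q)$. If $P\not\subset Y_{f_P}$, then $P\ne Z_{f_P}$, but as above $\pi$ forces the pattern $\gamma$ of $Z_{f_P}$, giving a different pattern of over-rotation pair $(p,q)$ and contradicting the over-twist property.

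For the backward direction, assume $P\subset Y_{f_P}$. The last assertion of Theorem~\ref{t:well-behaved} yields $I_{f_P,\chi}(y)=\{\rho_{f_P}\}$ for every $y\in P$, so $\rho(P)=\rho_{f_P}$ and $I_{f_P}=[\rho(P),\tfrac12]$. To invoke Theorem~\ref{t:endpt-ot} I still need $\pi$ convergent and $P$ to have coprime over-rotation pair. Convergence follows immediately from the well-behaved structure: the unique fixed point $a_f$ satisfies $f_P(x)>x$ on $[0,a_f)$ and $f_P(x)<x$ on $(a_f,1]$, which prevents any cycle from containing points $x<y$ with $f_P(x)<x$ and $f_P(y)>y$. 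Coprimality will come from the rotational rigidity on $Y_{f_P}$ supplied by Theorem~\ref{t:well-behaved}(1): since $f_P|_{Z_{f_P}}$ is canonically conjugate to a cycle of the rational rotation by $p/q$ with $\gcd(p,q)=1$, and this rotation admits a unique cycle of length $q$ on the circle, any periodic orbit inside $Y_{f_P}$ must correspond to this cycle and therefore have period $q$ and coprime over-rotation pair $(p,q)$. Theorem~\ref{t:endpt-ot} then yields that $\pi$ is over-twist.

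The main obstacle I anticipate is the coprimality step in the backward direction. Ruling out a periodic orbit $P\subset Y_{f_P}$ of non-coprime over-rotation pair $(kp,kq)$, $k\ge 2$, requires extending the canonical (semi-)conjugacy of Theorem~\ref{t:well-behaved} beyond the distinguished minimal set $Z_{f_P}$ and carefully exploiting the uniqueness of the length-$q$ cycle of the rational rotation together with the at-most-two-to-one nature of the semi-conjugacy, in order to force any such $P$ to coincide essentially with $Z_{f_P}$ and hence inherit its coprime pair.
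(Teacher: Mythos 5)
Your two-direction plan is essentially the paper's own: the authors present this corollary as a combination of Theorem~\ref{t:well-behaved} with the arguments from Lemma~\ref{l:bimotwist} (the ``only if'' direction) and from Corollary~\ref{c:ot} via Theorem~\ref{t:endpt-ot} (the ``if'' direction), and both halves of your write-up reproduce those arguments. You have also correctly identified coprimality of the over-rotation pair of $P$ as the load-bearing step in the ``if'' direction, since Theorem~\ref{t:endpt-ot} fails without it.

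That said, your sketch of the coprimality step is not quite right and does not actually close the gap. A rational rotation by $p/q$ on the circle has uncountably many cycles, not ``a unique cycle of length $q$'', and the ``at-most-two-to-one semi-conjugacy'' you appeal to belongs to the irrational alternative of Theorem~\ref{t:well-behaved}, which is not in play here since $\rho(P)$ is rational. There is also no need to force $P$ to coincide with the distinguished set $Z_{f_P}$. The clean argument is this: $P\subset Y_{f_P}$ means the $\sigma_{f_P}$-lifted orbit of $P$ avoids the flat spots of $G_{f_P}$, so $F_{f_P}=G_{f_P}$ along it, and its projection to the circle is a periodic orbit of the induced monotone degree-one map $\tau_{f_P}$. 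That projection is injective on the orbit (the points of $\sigma_{f_P}(P)$ are distinct in $[0,1]$ and avoid $0$ and $1$), so the corresponding circle point is $\tau_{f_P}$-periodic of period equal to the period of $P$. But a non-decreasing degree-one circle map with rotation number $p/q$ in lowest terms has every periodic orbit of period exactly $q$: the $q$-th power is monotone, degree one, with a fixed point, and such a map has no periodic points other than fixed points. Hence $P$ has period $q$, its over-rotation pair is $(p,q)$ and is coprime, and Theorem~\ref{t:endpt-ot} applies.
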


\end{document}